\newtheorem{tm}{Theorem}
\newtheorem{lemma}{Lemma}
\newtheorem{prop}{Proposition}
\newcommand{\cal}{\mathcal}
\title{Generalized metrics and generalized twistor spaces}
\author{Johann Davidov}
\thanks{The author is partially supported by  the National Science
Fund, Ministry of Education and Science of Bulgaria under contract
DN 12/2.}
\address{Institute of Mathematics and Informatics \\
Bulgarian Academy of Sciences\\ Acad. G.Bonchev St. Bl.8\\
1113 Sofia\\ Bulgaria} \email{jtd@math.bas.bg}
\begin{document}

\begin{abstract}
The twistor construction for Riemannian manifolds is extended to the case of  manifolds endowed with generalized metrics
(in the sense of generalized geometry \`a la Hitchin). The generalized twistor space associated to such a manifold is
defined as the bundle of generalized complex structures on the tangent spaces of the manifold compatible with the given
generalized metric. This space admits natural generalized almost complex structures whose integrability conditions are
found in the paper. An interesting feature of the generalized twistor spaces discussed in it is the existence of
intrinsic isomorphisms.

 \vspace{0,1cm} \noindent 2010 {\it Mathematics Subject Classification} 53D18, 53C28.

\vspace{0,1cm} \noindent {\it Key words: generalized complex structures, twistor
spaces}

\end{abstract}

\maketitle \vspace{0.5cm}

\section{Introduction}

The concept of generalized complex geometry has been introduced by Nigel Hitchin \cite{Hit02} and further developed by
his students M. Gualtieri \cite{Gu}, G. Cavalcanti \cite{Ca}, F. Witt \cite{Witt} as well as by many other mathematicians
and physicists (including Hitchin himself). A  generalized almost complex structure in the sense of Hitchin \cite{Hit02}
on a smooth manifold $M$ is an endomorphism ${\mathcal J}$ of the bundle $TM\oplus T^{\ast}M$ satisfying ${\mathcal
J}^2=-Id$ and compatible with the metric $<X+\alpha,Y+\beta>=\alpha(Y)+\beta(X)$. Similar to the case of a usual almost
complex structure, the integrability condition for  a generalized almost complex structure ${\mathcal J}$ is defined as
the vanishing of its  Nijenhuis tensor. However, for ${\mathcal J}$ this tensor is  defined by means of the bracket,
introduced by T. Courant \cite{Cou}, instead of the Lie bracket. If ${\mathcal J}$ is integrable, it is called a
generalized complex structure. Every complex and every symplectic structure determines a generalized complex structure in
a natural way. There are several  examples of generalized complex structures which are not defined by means of a complex
or a symplectic structure, to quote just a few of them \cite{CaGu04,CG09,CaG11,Gu,Hit05}. In \cite{Bre,DM06,DM07,
Des,GS,P14} such examples have been given by means of the Penrose twistor construction \cite{Pen,PeWa} as developed by
Atiyah, Hitchin and Singer \cite{AtHiSi} in the framework of Riemannian geometry. While the base manifold of the twistor
space considered in \cite{DM06,DM07}  is not equipped with a metric, the base manifold in \cite{Des} is a
four-dimensional Riemannian manifold $M$ and the one in \cite{Bre,GS} is a hyper-K\"ahler manifold. The fiber of the
twistor space in \cite {Des} consists of (linear) generalized complex structures on the tangent spaces of the base
manifold compatible with the metric on $TM\oplus T^{\ast}M$ induced by the metric of $M$. This construction can be placed
and generalized in the framework of the concept of a generalized metric, introduced by Gualtieri \cite{Gu} and Witt
\cite{Witt}.

A generalized metric on a vector space $T$ is a subspace $E$ of $T\oplus T^{\ast}$ such that $dim\,E=dim\,T$ and  the
metric $<.\,,.>$ is positive definite on $E$. Every generalized metric is uniquely determined by a positive definite
metric $g$ and a skew-symmetric $2$-form $\Theta$ on $T$ so that $E=\{X+\imath_{X}g+\imath_{X}\Theta: X\in T\}$. It is
convenient to set $E'=E$ and $E''=E^{\perp}$, the orthogonal complement of $E$ with respect to $<.\,,.>$. Then $T\oplus
T^{\ast}=E'\oplus E''$ and the restrictions to $E'$ and $E''$ of the projection $pr_{T}:T\oplus T^{\ast}\to T$ are
bijective maps sending the metrics $<.\,,.>|E'$ and $<.\,,.>|E''$ to $g$ and $-g$. A generalized complex structure
${\mathcal J}$ on $T$ is called compatible with $E$ if ${\mathcal J}E=E$; in this case ${\mathcal J}E''=E''$. Define a
generalized complex structure ${\mathcal J}^2$ on $T$ by ${\mathcal J}^2={\mathcal J}$ on $E'$, ${\mathcal
J}^2=-{\mathcal J}$ on $E''$, and set ${\mathcal J}^1={\mathcal J}$. Then $({\mathcal J}^1,{\mathcal J}^2)$ is a pair of
commuting generalized complex structures for which the metric
$<-\mathcal{J}^1\circ\mathcal{J}^2(v),w>=<\mathcal{J}^2(v),\mathcal{J}^1(w)>$ on $T\oplus T^{\ast}$ is positive definite.
Recall that such a pair is called linear generalized K\"ahler structure \cite{Gu, Gu14}. Conversely, for every linear
generalized K\"ahler structure $({\mathcal J}^1,{\mathcal J}^2)$, the $+1$-eigenspace of the involution $-{\mathcal
J}^1{\mathcal J}^2$ is a generalized metric compatible with ${\mathcal J}^1$. Note also that if $g$ is a positive
definite metric on $T$, then a generalized complex structure on $T$ is compatible with the generalized metric
$E=\{X+\imath_{X}g: X\in T\}$ if and only if it is compatible with the metric on $T\oplus T^{\ast}$ induced by $g$.

A generalized metric on a manifold $M$ is a subbundle $E$ of $TM\oplus T^{\ast}M$ such that $rank\,E=dim\,M$ and the
metric $<.\,,.>$ is positive definite on $E$. Given a generalized metric $E$, denote by ${\mathcal G}(E)$ the bundle over
$M$ whose fibre at every point $p\in M$ consists of all generalized complex structures on the tangent space $T_pM$
compatible with the generalized metric $E_p$, the fibre of $E$ at $p$. Equivalently, the fibre of ${\mathcal G}(E)$ is
the set of  linear generalized K\"ahler structures on $T_pM$ yielding the given generalized metric $E_p$. We call
${\mathcal G}(E)$ the generalized twistor space of the generalized Riemannian manifold $(M,E)$. Let ${\mathcal Z}(E')$ be
the bundle over $M$ whose fibre at $p\in M$ consists of complex structures on the vector space $E'_p$ compatible with the
metric $g'=<.\,,.>|E'$. Similarly, let ${\mathcal Z}(E'')$ be the bundle of complex structures on the spaces $E''_p$
compatible with the positive definite metric \\  $g''=-<.\,,.>|E''$. Then the bundle ${\mathcal G}(E)$ is isomorphic to
the product bundle ${\mathcal Z}(E')\times {\mathcal Z}(E'')$. Given connections $D'$ and $D''$ on the bundles $E'$ and
$E''$ one can define a generalized almost complex structure ${\mathcal J}_1$ on ${\mathcal G}(E)$ following the general
scheme of the twistor construction. This structure is an analog of the Atiyah-Hitchin-Singer  almost complex structure on
the usual twistor space \cite{AtHiSi}. One can also define three generalized almost complex structures ${\mathcal J}_i$,
$i=2,3,4$, on ${\mathcal G}(E)$ which are analogs of the Eells-Salamon  almost complex structure \cite{EeSa}. As one can
expect, the structures ${\mathcal J}_i$ are never integrable. As far as ${\mathcal J}_1$ is concerned, we discuss the
integrability conditions for ${\mathcal J}_1$ in the case when the connections $D'$ and $D''$ are determined by the
generalized metric $E$ as follows. Using the Courant bracket one can define a metric connection $D'$ on the bundle $E'=E$
\cite{Hit10}. Transferring this connection by means of the isomorphism $pr_{TM}|E:E\to TM$ we obtain a connection
$\nabla$ on $TM$ compatible with the metric $g$ whose torsion $3$-form is $d\Theta$ , $g$ and $\Theta$ being the metric
and the $2$-form determined by $E$ [ibid.]. The connection on $TM\oplus T^{\ast}M$ induced by $\nabla$ may not preserve
the bundle $E''$, so we transfer $\nabla$ to a connection $D''$ on $E''$ by means of the isomorphism
$(pr_{TM}|E'')^{-1}:TM\to E''$. The manifold ${\mathcal G}(E)$ has four connected components and we find the
integrability conditions for the restriction of ${\mathcal J}_1$ to each of these components when $dim\,M=4k$. One of the
integrability conditions is $d\Theta=0$ and the others impose restrictions on the curvature of the Riemannian manifold
$(M,g)$. In the case of an oriented four-dimensional manifold $M$ these curvature restrictions coincide with those found
in \cite{Des} when $\Theta=0$. The reason is that if $d\Theta=0$, $\nabla$ is the Levi-Civita connection of $(M,g)$ used
therein to define the twistor space. Another explanation of this fact is that if $\Theta$ is closed,  the generalized
almost complex structures corresponding to the generalized metrics $E=\{X+\imath_{X}g+\imath_{X}\Theta: X\in TM\}$ and
$\widehat{E}=\{X+\imath_{X}g: X\in TM\}$ are equivalent (see Sec. 7).

A specific property of generalized twistor spaces that the usual twistor spaces do not possess is that the generalized
twistor spaces admit naturally defined (intrinsic) isomorphisms. One of these reflects the so-called $B$-transforms (the
latter being an important feature of the generalized geometry), the others come from the decomposition $TM\oplus
T^{\ast}M=E'\oplus E''$. In particular, if $E$ and $\widehat{E}$ are generalized metrics on a manifold determined by the
same metric $g$ and $2$-forms $\Theta$, $\widehat\Theta$ such that the $2$-form $\Theta-\widehat\Theta$ is closed, the
natural generalized almost complex structures on the generalized twistor spaces ${\mathcal G}(E)$ and ${\mathcal
G}(\widehat{E})$ are equivalent.

This paper is organized as follows. In  Section 2, we collect
several known facts for generalized geometry used in the paper. The
generalized almost complex structures ${\mathcal J}_{\varepsilon}$,
$\varepsilon=1,...,4$, on ${\mathcal G}(E)$ mentioned above are
defined in the third section. The fourth one contains technical
lemmas needed for computing the Nijenhuis tensors of the structures
${\mathcal J}_{\varepsilon}$. Coordinate-free formulas for the
Nijensuis tensors are given in Section 5. These formulas are used in
Section 6 to obtain integrability conditions for ${\mathcal
J}_{\varepsilon}$. Section 7 is devoted to natural isomorphisms of
generalized twistor spaces.

\smallskip

\noindent {\bf Acknowledgment}. I would like to thank the referee whose remarks and comments helped to improve the final
version of the paper.

\section{Preliminaries}

\subsection{Generalized complex structures on vector spaces}

Let $T$ be a $n$-dimensional real vector space. Suppose we are given
a metric $g$ and a complex structure $J$ on $T$.  Let
$J^{\ast}:T^{\ast}\to T^{\ast}$ be the dual map of $J$. Then the
complex structure $J$ is compatible with $g$, i.e. $g$-orthogonal,
if and only if $J=-J^{\ast}$ under the identification $T\cong
T^{\ast}$ determined by the metric $g$. Replacing $T$ by the vector
space $T\oplus T^{\ast}$,  note that we have a canonical isomorphism
$T\oplus T^{\ast}\cong (T\oplus T^{\ast})^{\ast}$.

\noindent {\bf Definition}. A generalized complex structure on $T$
is a complex structure ${\mathcal J}$ on the space $T\oplus
T^{\ast}$ such that ${\mathcal J}=-{\mathcal J}^{\ast}$ under the
identification $T\oplus T^{\ast}\cong (T\oplus T^{\ast})^{\ast}$.

The latter isomorphism is determined by the metric
$<X+\alpha,Y+\beta>=\alpha(Y)+\beta(X)$, $X,Y\in T$,
$\alpha,\beta\in T^{\ast}$, of signature $(n,n)$. Thus the condition
${\mathcal J}=-{\mathcal J}^{\ast}$ is equivalent to the requirement
that ${\mathcal J}$ is compatible with this metric.  It turns out that it is convenient to consider
one half of that metric, so we set
$$
<X+\alpha,Y+\beta>=\frac{1}{2}[\alpha(Y)+\beta(X)],\quad X,Y\in
T,\quad \alpha,\beta\in T^{\ast}.
$$

We note also
that if a real vector space admits a generalized complex structure
it is of even dimension \cite{Gu}.
\smallskip

\noindent {\it Notation}. The map $T\to T^{\ast}$ determined by a
bilinear form $\varphi$ on $T$ will be denoted again by $\varphi$;
thus $\varphi(X)(Y)=\varphi(X,Y)$.

\smallskip

Here are some standard examples of generalized complex structures \cite{Gu,Gu11}.

\noindent {\bf Examples}. {\bf 1}. Every complex structure $J$ on $T$
determines a generalized complex structure $\mathcal{J}$ defined by
$$
{\mathcal J}X = JX,\quad{\mathcal J}\alpha=
-J^{\ast}\alpha\quad\textrm{for}\quad X\in T,\, \alpha \in T^{\ast}.
$$
\noindent {\bf 2}. If $\omega$ is a symplectic form on  $T$ (a non-degenerate
skew-symmetric $2$-form), the map $\omega:T\to T^{\ast}$ is
an isomorphism and we set
$$
\mathcal{J}X=-\omega(X),\quad
\mathcal{J}\alpha=\omega^{-1}(\alpha).
$$
Then $\mathcal{J}$ is a generalized complex structure on $T$.

\smallskip

\noindent {\bf 3}. Let $J$ be a complex structure on $T$. Let $T^{\mathbb
C}=T^{1,0}\oplus T^{0,1}$ be the decomposition of the
complexification of $T$ into the direct sum of $(1,0)$ and
$(0,1)$-vectors with respect to $J$. Take a $2$-vector $\pi \in
\Lambda^{2}T^{\mathbb C}$. Then, for  $\xi \in (T^{1,0})^{\ast}$,
there is a unique vector $\pi^{\sharp}(\xi)\in T^{{\mathbb C}}$ such
that
$$
\eta(\pi^{\sharp}(\xi)) = (\xi \wedge \eta)(\pi)\quad \textrm{for
every}\quad \eta \in (T^{1,0})^{\ast}.
$$
In fact $\pi^{\sharp}(\xi)\in T^{1,0}$ and depends only on the
$\Lambda^2T^{1,0}$-component of $\pi$. Then we can define a
generalized complex structure ${\mathcal J}$ on $T$ setting
$$
{\mathcal J}X=JX+2(Im\,\pi^{\sharp})(\alpha),\quad {\mathcal
J}\alpha=-J^{\ast}\alpha,
$$
where ($Im\,\pi^{\sharp})(\alpha)$ is the vector in $T$ determined
by the identity
$\beta((Im\,\pi^{\sharp})(\alpha))=(\alpha\wedge\beta)(Im\,\pi)$ for
every $\beta\in T^{\ast}$.

\smallskip

\noindent{\bf 4}. The direct sum of generalized complex structures
is also a generalized complex structure.

\smallskip

\noindent{\bf 5}. Any $2$-form $B\in \Lambda^{2}T^{\ast}$ acts on $T\oplus T^{\ast}$ via the inclusion
$\Lambda^{2}T^{\ast}\subset \Lambda^{2}(T\oplus T^{\ast})\cong so(T\oplus T^{\ast})$; in fact this is the action
$X+\alpha\to B(X)$,~ $X\in T$, $\alpha\in T^{\ast}$. Denote the latter map again by $B$. Then the invertible map $e^{B}$
is given by $X+\alpha\to X+\alpha+B(X)$ and is an orthogonal transformation of $T\oplus T^{\ast}$ called {\it a
$B$-transform}. Thus, given a generalized complex structure ${\mathcal J}$ on $T$, the map $e^{B}{\mathcal J}e^{-B}$ is
also a generalized complex structure on $T$, called {\it the $B$-transform of ${\mathcal J}$}.

\smallskip

We refer to \cite{Gu,Gu11} for more linear algebra of generalized
complex structures on vector spaces.

\subsection{Generalized metrics on vector spaces}

Let $T$ be a $n$-dimensional real vector space.  Every metric $g$ on
$T$ is completely determined by its graph $E=\{X+g(X):~X\in
T\}\subset T\oplus T^{\ast}$. The restriction to $E$ of the metric
$<.\,,.>$ on $T\oplus T^{\ast}$ is
$$
<X+g(X),Y+g(Y)>=g(X,Y).
$$
In particular, $<.\,,.>|E$ is positive definite if $g$ is so. This
motivates the following definition \cite{Gu,Witt}.

\noindent{\bf Definition}\label{G-W} A generalized metric on $T$ is
a subspace $E$ of $T\oplus T^{\ast}$ such that

\noindent $(1)$~$dim\,E=dim\,T$

\noindent $(2)$~The restriction of the metric $<.\,,.>$  to $E$ is
positive definite.

\smallskip

Set
$$
E'=E, \quad E''=E^{\perp}=\{w\in T\oplus
T^{\ast}:~<w,v>=0~~\textrm{for~ every}~~v\in E\}.
$$
Then $T\oplus T^{\ast}=E'\oplus E''$ since the bilinear form
$<.\,,.>$ is non-degenerate. Moreover the metric $<.\,,.>$ is
negative definite on $E''$.

It is easy to see that to determine a generalized metric on $T$ is
equivalent to defining  an orthogonal, self-adjoint with respect to
the metric $<.\,,.>$,  linear operator $\mathscr{G}:T\oplus
T^{\ast}\to T\oplus T^{\ast}$ such that $<\mathscr{G}w,w>$ is
positive for $w\in T\oplus T^{\ast}$, $w\neq 0$. Such an operator
$\mathscr{G}$ is an involution different from $\pm$ the identity and
the generalized metric corresponding to it is the $+1$-eigenspace of
$\mathscr{G}$.

If  $E$ is a generalized metric, we have $T^{\ast}\cap E=\{0\}$
since the restriction of the metric $<.\,,.>$ to $T^{\ast}$
vanishes, while its restriction to $E$ is positive definite. Thus
$T\oplus T^{\ast}=E\oplus T^{\ast}$ since
$dim\,E=dim\,T^{\ast}=n$. Then $E$ is the graph of a map
$\alpha:T\to T^{\ast}$, $E=\{X+\alpha(X):~X\in T\}$. Let   $g$ and
$\Theta$ be the bilinear forms on $T$ determined by the symmetric
and skew-symmetric parts of $\alpha$. Under this notation
\begin{equation}\label{gm}
E'=E=\{X+g(X)+\Theta(X):~X\in T\},\quad E''=\{X-g(X)+\Theta(X):~X\in
T\}.
\end{equation}
The restriction of the metric $<.\,, .>$ to $E$ is
\begin{equation}\label{pd}
<X+g(X)+\Theta(X),Y+g(Y)+\Theta(Y)>=g(X,Y),\quad X,Y\in T.
\end{equation}
Hence the bilinear form $g$ on $T$ is positive definite. Thus every generalized metric $E$ is uniquely determined by a
positive definite metric $g$ and a skew-symmetric $2$-form $\Theta$ on $T$ such that $E$ has the representation
(\ref{gm}). Let $pr_{T}: T\oplus T^{\ast}\to T$ be the natural projection. The restriction of this projection to $E$ is
an isomorphism since $E\cap T^{\ast}=\{0\}$. Identity (\ref{pd}) tells us that the isomorphism $pr_{T}|E:E\to T$ is an
isometry when $E$ is equipped with the metric $<.\,.>|E$ and $T$ with the metric $g$. Similarly, the map $pr_{T}|E''$ is
an isometry of the metrics $<.\,,.>|E''$ and $-g$.

\subsection{Generalized Hermitian structures on vector spaces}
Let $E=\{X+g(X):~X\in T\}$ be the generalized metric determined by a positive definite metric $g$ on $T$ and let
${\mathcal J}$ be the generalized complex structure determined by a complex structure $J$ on $T$, $\mathcal{J}X=JX$,
$\mathcal{J}\alpha=-J^{\ast}\alpha$, $X\in T$, $\alpha\in T^{\ast}$. Then $J$ is compatible with $g$, i.e.
$g$-orthogonal, if and only if ${\mathcal J}E\subset E$ (and so ${\mathcal J}E=E$). This leads to the following
definition, see \cite{Gu}.

\noindent {\bf Definition}. A generalized complex structure
$\mathcal{J}$ on $T$ is said to be compatible with a generalized
metric $E$ if the operator $\mathcal{J}$ preserves the space $E$.

As usual, if ${\mathcal J}$ is compatible with $E$, we shall also
say that the generalized metric $E$ is compatible with ${\mathcal
J}$. A pair $(E,{\mathcal J})$ of a  generalized metric and  a
compatible  generalized complex structure is said to be a {\it
generalized Hermitian structure}.

Suppose that a generalized metric $E$ is determined by  an
orthogonal, self-adjoint  linear operator $\mathscr{G}:T\oplus
T^{\ast}\to T\oplus T^{\ast}$ with the property that
$<\mathscr{G}w,w>$ is positive for  $w\neq 0$. Then a generalized
complex structure ${\mathcal J}$ is compatible with $E$ if and only
if the linear operators $\mathcal{J}$ and $\mathscr{G}$ commute. In
this case $\mathcal{J}^2=\mathscr{G}\circ \mathcal{J}$ is a
compatible generalized complex structure on $T$ commuting with the
generalized complex structure $\mathcal{J}^1= \mathcal{J}$.
Moreover, the metric
$$
<-\mathcal{J}^1\circ\mathcal{J}^2(v),w>=<\mathcal{J}^2(v), \mathcal{J}^1(w)>
$$
on $T\oplus T^{\ast}$ is positive definite.  Recall that a pair of
$(\mathcal{J}^1,\mathcal{J}^2)$ of commuting generalized complex
structures such that the metric above is positive definite is called
a {\it linear generalized K\"ahler structure} \cite{Gu, Gu14}. Given
such a structure, the operator
$\mathscr{G}=-\mathcal{J}^1\circ\mathcal{J}^2$ determines a
generalized metric compatible with ${\mathcal J}^1$ and ${\mathcal
J}^2$. Thus the notion of a generalized Hermitian structure on a
vector space is equivalent to the concept of a linear generalized
K\"ahler structure. To fix a generalized metric $E$ means to
consider a linear generalized K\"ahler structure
$(\mathcal{J}^1,\mathcal{J}^2)$ such that $E$ is the $+1$-eigenspace
of the involution $\mathscr{G}=-\mathcal{J}^1\circ\mathcal{J}^2$.

\smallskip

\noindent {\bf Example} {\bf 6}. Let $J$ be a complex structure on
$T$ compatible with a metric $g$ and let $\omega(X,Y)=g(X,JY)$. If
${\mathcal J}^1$ and ${\mathcal J}^2$ are the generalized complex
structures determined by $J$ and $\omega$, respectively, then
$({\mathcal J}^1,{\mathcal J}^2)$ is a linear generalized K\"ahler
structure. The generalized Hermitian structure  defined by
$({\mathcal J}^1,{\mathcal J}^2)$  is $(E,{\mathcal J}^1)$, where
$$
E=\{X+\alpha\in T\oplus T^{\ast}:~{\mathcal J}^1(X+\alpha)={\mathcal J}^2(X+\alpha)\}=\{X+g(X):X\in T\}.
$$
This generalized metric is determined by the operator
$\mathscr{G}=-{\mathcal J}^1\circ{\mathcal J}^2$; it is given by
$\mathscr{G}(X+g(Y))=Y+g(X)$, $X,Y\in T$.

\smallskip

Let $(E,{\mathcal J})$ be a generalized Hermitian structure with
$E=\{X+g(X)+\Theta(X):~X\in T\}$. Then ${\mathcal J}E'=E'$,
${\mathcal J}E''=E''$, where, as above, $E'=E$, $E''=E^{\perp}$, and
we can define two complex structures on $T$ setting
\begin{equation}\label{J1,2}
J_1=(pr_{T}|E')\circ\mathcal{J}\circ (pr_{T}|E')^{-1},\quad
J_2=(pr_{T}|E'')\circ\mathcal{J}\circ (pr_{T}|E'')^{-1}.
\end{equation}
These structures are compatible with the metric $g$. Thus we can
assign a positive definite metric $g$, a skew-symmetric form
$\Theta$ and two $g$-compatible complex structures $J_1$ , $J_2$ on
$T$ to any generalized Hermitian structure $(E,{\mathcal J})$.  The
generalized complex structure ${\mathcal J}$ can be reconstructed
from the the data $g,\Theta,J_1,J_2$ by means of an explicit formula
\cite{Gu}.

\begin{prop}\label{J-J1,2}
Let $g$ be a positive definite metric, $\Theta$ - a skew-symmetric
$2$-form on $T$, and $J_1$, $J_2$ - two complex structures
compatible with the metric $g$. Let $\omega_{k}(X,Y)=g(X,J_{k}Y)$ be
the fundamental $2$-forms of the Hermitian structure $(g,J_{k})$,
$k=1,2$. Then the block-matrix representation of the generalized
complex structure $\mathcal{J}$ determined by  the data
$(g,\Theta,J_1,J_2)$ is of the form
$$
\mathcal{J}=\frac{1}{2}\left(
  \begin{array}{cc}
    I & 0 \\
    \Theta & I \\
  \end{array}
  \right)
  \left(
  \begin{array}{cc}
    J_1+J_2 & \omega_{1}^{-1}-\omega_{2}^{-1} \\
    -(\omega_{1}-\omega_{2})& -(J_1^{\ast}+J_2^{\ast}) \\
  \end{array}
  \right)
    \left(
  \begin{array}{cc}
    I & 0 \\
    -\Theta & I \\
  \end{array}
  \right),
  $$
where $I$ is the identity matrix and $\Theta$, $\omega_1$,
$\omega_2$ stand for the maps $T\to T^{\ast}$ determined by the
corresponding $2$-forms.
\end{prop}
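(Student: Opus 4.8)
The plan is to recognize the two outer factors as $B$-transforms and reduce to the case $\Theta=0$. Write $e^{\Theta}$ for the $B$-transform with $B=\Theta$ (Example 5); in block form $e^{\Theta}=\left(\begin{smallmatrix} I & 0\\ \Theta & I\end{smallmatrix}\right)$ and $e^{-\Theta}=\left(\begin{smallmatrix} I & 0\\ -\Theta & I\end{smallmatrix}\right)$, so the two outer factors are precisely $e^{\Theta}$ and $e^{-\Theta}$. Let $\mathcal{J}_0$ be the generalized complex structure determined by the data $(g,0,J_1,J_2)$ and let $E_0=\{X+g(X)\}$, $E_0''=\{X-g(X)\}$ be its eigenspaces. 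Since $pr_{T}\circ e^{\Theta}=pr_{T}$ and, by (\ref{gm}), $E=e^{\Theta}E_0$ and $E''=e^{\Theta}E_0''$, a direct check shows that $e^{-\Theta}\mathcal{J}e^{\Theta}$ is compatible with $E_0$ and induces, via the projections, the same pair $(J_1,J_2)$ as $\mathcal{J}$ does for $E$ (one uses $e^{\Theta}(pr_{T}|E_0)^{-1}=(pr_{T}|E)^{-1}$ and $(pr_{T}|E_0)\circ e^{-\Theta}=pr_{T}|E$ on $E$). As a structure compatible with $E_0$ and inducing a prescribed pair $(J_1,J_2)$ is unique (it is determined on $E_0\oplus E_0''=T\oplus T^{\ast}$), we get $e^{-\Theta}\mathcal{J}e^{\Theta}=\mathcal{J}_0$, i.e. $\mathcal{J}=e^{\Theta}\mathcal{J}_0 e^{-\Theta}$. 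It therefore remains to prove that $\mathcal{J}_0$ equals the middle matrix.

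For this I would compute $\mathcal{J}_0$ on the two subspaces spanning $T\oplus T^{\ast}$. Compatibility gives $\mathcal{J}_0 E_0=E_0$, $\mathcal{J}_0 E_0''=E_0''$, and the definition of $J_1,J_2$ yields
$$
\mathcal{J}_0(X+g(X))=J_1X+g(J_1X),\qquad \mathcal{J}_0(X-g(X))=J_2X-g(J_2X).
$$
Given $Y+\beta\in T\oplus T^{\ast}$, set $Z=g^{-1}\beta$ and split $Y+g(Z)=(X_1+g(X_1))+(X_2-g(X_2))$ with $X_1=\tfrac12(Y+Z)$, $X_2=\tfrac12(Y-Z)$. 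Applying $\mathcal{J}_0$ and collecting components, the $T$-part of $\mathcal{J}_0(Y+\beta)$ is $\tfrac12\big((J_1+J_2)Y+(J_1-J_2)g^{-1}\beta\big)$ and the $T^{\ast}$-part is $\tfrac12\big(g(J_1Y)-g(J_2Y)+g(J_1Z)+g(J_2Z)\big)$.

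It remains to rewrite these in terms of $\omega_1,\omega_2$ and $J_1^{\ast},J_2^{\ast}$, which is the only delicate point. Because each $J_k$ is $g$-orthogonal, the maps $T\rightleftarrows T^{\ast}$ satisfy $g\circ J_k=-\omega_k$, whence $\omega_k^{-1}=J_k\circ g^{-1}$ and $\omega_k\circ g^{-1}=J_k^{\ast}$ (equivalently $gJ_kg^{-1}=-J_k^{\ast}$); all three follow from $J_k^{-1}=-J_k$ and $g(J_kX,W)=-g(X,J_kW)$. Substituting $(J_1-J_2)g^{-1}=\omega_1^{-1}-\omega_2^{-1}$ turns the $T$-part into $\tfrac12\big((J_1+J_2)Y+(\omega_1^{-1}-\omega_2^{-1})\beta\big)$, while $g\circ J_k=-\omega_k$ together with $\omega_k\circ g^{-1}=J_k^{\ast}$ turns the $T^{\ast}$-part into $-\tfrac12\big((\omega_1-\omega_2)Y+(J_1^{\ast}+J_2^{\ast})\beta\big)$. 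These are exactly the two rows of the middle matrix applied to $Y+\beta$, so $\mathcal{J}_0$ equals that matrix; combined with the first paragraph this gives the asserted formula for general $\Theta$. The main obstacle is purely the bookkeeping of these four maps between $T$ and $T^{\ast}$ and the signs coming from $J_k^{-1}=-J_k$; everything else is linear algebra on the two spanning subspaces.
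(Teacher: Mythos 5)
Your proposal is correct, and it organizes the argument differently from the paper. The paper proves the formula by a single direct computation in the presence of $\Theta$: it decomposes an arbitrary $X\in T$ and $\alpha\in T^{\ast}$ into their $E'$- and $E''$-components via the explicit formulas (\ref{prX}) and (\ref{pralpha}) (which contain the $\Theta\circ g^{-1}\circ\Theta$ terms), applies ${\mathcal J}$ on each component using fact (b), and matches the result against the stated matrix, using the same sign identities $\omega_k^{-1}\circ g=J_k$, $\omega_k=-g\circ J_k$, $J_k^{\ast}\circ g=-g\circ J_k$ that you use. You instead factor out the $B$-transform first: you prove ${\mathcal J}=e^{\Theta}{\mathcal J}_0e^{-\Theta}$ by checking that $e^{-\Theta}{\mathcal J}e^{\Theta}$ preserves $E_0$ and $E_0''$ and induces the same pair $(J_1,J_2)$, and then invoke the (correct and easy) uniqueness of a compatible structure inducing a prescribed pair; only then do you compute, in the clean $\Theta=0$ situation where the component formulas reduce to $X_1=\tfrac12(Y+Z)$, $X_2=\tfrac12(Y-Z)$. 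Your route buys a shorter and more conceptual computation, and it makes the conjugation shape of the answer transparent from the start (the paper only records ${\mathcal J}=e^{\Theta}{\mathcal I}e^{-\Theta}$ afterwards, in Remark 4). The paper's direct route has the side benefit that the component formulas (\ref{prX}) and (\ref{pralpha}) are established along the way, and these are reused later (e.g.\ for the block matrix of $\mathscr{G}$ in Remark 3 and for the derivative formulas in Section 6), so the paper cannot simply skip them.
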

This follows from the identities $\omega_k^{-1}\circ g=J_k$, $\omega_k=-g\circ J_k$, $J^{\ast}_k\circ g=-g\circ J_k$,
$k=1,2$, and the following facts, which will be used further on: \\(a) the $E'$ and $E''$-components
 of a vector $X\in T$ are
\begin{equation}\label{prX}
\begin{array}{c}
X_{E'}=\displaystyle{\frac{1}{2}}\{X- (g^{-1}\circ \Theta)(X)+ g(X)-
(\Theta\circ g^{-1}\circ \Theta)(X)\},\\[8pt]
X_{E''}=\displaystyle{\frac{1}{2}}\{X+ (g^{-1}\circ \Theta)(X)-
g(X)+ (\Theta\circ g^{-1}\circ \Theta)(X)\};
\end{array}
\end{equation}
the  components  of a $1$-form $\alpha\in T^{\ast}$ are given by
\begin{equation}\label{pralpha}
\begin{array}{c}
\alpha_{E'}=\displaystyle{\frac{1}{2}}\{g^{-1}(\alpha)+\alpha+(\Theta\circ
g^{-1})(\alpha)\},\\[8pt]
\alpha_{E''}=\displaystyle{\frac{1}{2}}\{-g^{-1}(\alpha)+\alpha-(\Theta\circ
g^{-1})(\alpha)\}.
\end{array}
\end{equation}
(b)$
\begin{array}{c}
{\mathcal J}(X+g(X)+\Theta(X))=J_1X+g(J_1X)+\Theta(J_1X),\\[6pt]
{\mathcal J}(X-g(X)+\Theta(X))=J_2X-g(J_2X)+\Theta(J_2X).
\end{array}
$

\smallskip

\smallskip

\noindent {\bf Example} {\bf 7}. Let $J$ be a complex structure on
$T$ compatible with a metric $g$. Then, under the notation in the
proposition above, ${\mathcal J}$ is the generalized complex
structure defined by $J$ exactly when $J_1=J_2=J$ and $\Theta=0$.
The generalized complex structure defined by the $2$-form
$\omega(X,Y)=g(X,JY)$ is determined by the data
$(g,\Theta=0,J=J_1=-J_2)$.

\smallskip

\noindent {\bf Remarks}. 1. The forms $\omega_{k}$ used here
differ by a sign from those used in \cite[Proposition 6.12]{Gu}.

2. Suppose that the generalized complex structure ${\mathcal J}$ is
determined by the data $(g,\Theta,J_1,J_2)$. Let $\mathscr{G}$ be
the endomorphism of $T\oplus T^{\ast}$ corresponding to the
generalized metric defined by means of $(g,\Theta)$. Then the
generalized complex structure ${\cal J}^2=\mathscr{G}\circ {\mathcal
J}$ is determined by the data $(g,\Theta,J_1,-J_2)$.

3. It follows from
(\ref{prX}) and (\ref{pralpha}) that the block-matrix representation
of the endomorphism $\mathscr{G}$ is (\cite{Gu})
$$
\mathscr{G}=\left(
  \begin{array}{cc}
    I & 0 \\
    \Theta & I \\
  \end{array}
  \right)
  \left(
  \begin{array}{cc}
    0 & g^{-1} \\
    g & 0 \\
  \end{array}
  \right)
    \left(
  \begin{array}{cc}
    I & 0 \\
    -\Theta & I \\
  \end{array}
  \right).
  $$

4. According to Proposition~\ref{J-J1,2}, ${\mathcal J}=e^{\Theta}{\mathcal I}e^{-\Theta}$ where
${\mathcal I}$ is the generalized complex structure on $T$ with block-matrix
$$
\mathcal{I}=\frac{1}{2}
  \left(
  \begin{array}{cc}
    J_1+J_2 & \omega_{1}^{-1}-\omega_{2}^{-1} \\
    -(\omega_{1}-\omega_{2})& -(J_1^{\ast}+J_2^{\ast}). \\
  \end{array}
  \right).
 $$
The restriction to $T^{\ast}$ of every $B$-transform of $T\oplus
T^{\ast}$ is the identity map. It follows that ${\mathcal J}$
preserves $T^{\ast}$ exactly when $J_1=J_2$ and ${\mathcal J}$ sends
$T^{\ast}$ into $T$ if and only if $J_1=-J_2$. Thus, if $J_1\neq
J_2$, the generalized complex structure ${\mathcal J}$ is not a
$B$-transform of the generalized complex structure determined by a
complex structure (Example 1), or by a complex structure and a
$2$-vector (Example 3). Also, if $J_{1}\neq - J_{2}$, ${\mathcal J}$
is not a $B$-transform of the generalized complex structure
determined by a symplectic form (Example 2).

\smallskip

 Proposition 1 in \cite{Des} and the fact that to define a
generalized Hermitian structure is equivalent to defining a linear
generalized K\"ahler structure imply the following

\begin{prop}
Let $g$ be a positive definite metric on $T$ and $g^{\ast}$ the
metric on $T^{\ast}$ determined by $g$. A generalized complex
structure $\mathcal{J}$ on $T$ is compatible with the generalized
metric $E=\{X+g(X):~ X\in T\}$ if and only if it is compatible with
the metric $g\oplus g^{\ast}$ on $T\oplus T^{\ast}$.
\end{prop}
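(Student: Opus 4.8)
The plan is to encode both compatibility conditions through the self-adjoint involution $\mathscr{G}$ attached to the generalized metric $E$, thereby reducing the asserted equivalence to a purely algebraic identity about commuting operators. Recall from the subsection on generalized Hermitian structures that a generalized complex structure $\mathcal{J}$ is compatible with $E$ exactly when $\mathcal{J}$ commutes with $\mathscr{G}$. Hence it suffices to prove that $\mathcal{J}$ is orthogonal for $g\oplus g^{\ast}$ if and only if $\mathcal{J}\mathscr{G}=\mathscr{G}\mathcal{J}$.

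First I would write down $\mathscr{G}$ explicitly for the metric at hand. Since $E=\{X+g(X):X\in T\}$ corresponds to $\Theta=0$, Example 6 gives $\mathscr{G}(X+g(Y))=Y+g(X)$; as $g:T\to T^{\ast}$ is an isomorphism, this means $\mathscr{G}(X+\alpha)=g^{-1}(\alpha)+g(X)$ for all $X\in T$, $\alpha\in T^{\ast}$. The crux of the argument is the observation that the positive definite metric $<\mathscr{G}\cdot,\cdot>$ coincides, up to the scalar $\frac{1}{2}$, with $g\oplus g^{\ast}$. Indeed, for $u=X+\alpha$ and $v=Y+\beta$ a short computation with the half-normalized pairing gives $<\mathscr{G}u,v>=\frac{1}{2}[g(X,Y)+\beta(g^{-1}\alpha)]=\frac{1}{2}[g(X,Y)+g^{\ast}(\alpha,\beta)]$, where I use that $g^{\ast}(\alpha,\beta)=\beta(g^{-1}\alpha)$ is precisely the induced metric on $T^{\ast}$. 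Thus $g\oplus g^{\ast}=2<\mathscr{G}\cdot,\cdot>$, so the two metrics share the same orthogonal group.

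It then remains to turn orthogonality for $<\mathscr{G}\cdot,\cdot>$ into the commutator condition. Because $\mathcal{J}$ is a generalized complex structure, it satisfies $\mathcal{J}^{\ast}=-\mathcal{J}$ with respect to $<\cdot,\cdot>$, so $<\mathscr{G}\mathcal{J}u,\mathcal{J}v>=<-\mathcal{J}\mathscr{G}\mathcal{J}u,v>$. Requiring this to equal $<\mathscr{G}u,v>$ for all $u,v$ and using non-degeneracy of $<\cdot,\cdot>$ yields $-\mathcal{J}\mathscr{G}\mathcal{J}=\mathscr{G}$; left-multiplying by $\mathcal{J}$ and invoking $\mathcal{J}^2=-Id$ converts this into $\mathscr{G}\mathcal{J}=\mathcal{J}\mathscr{G}$, and each step is reversible. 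By the equivalence recalled above, this is exactly compatibility of $\mathcal{J}$ with $E$, closing the chain: $g\oplus g^{\ast}$-orthogonal $\Leftrightarrow$ $\mathcal{J}\mathscr{G}=\mathscr{G}\mathcal{J}$ $\Leftrightarrow$ compatible with $E$.

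The whole proof is formal once the correct objects are assembled; the single substantive point is the identity $g\oplus g^{\ast}=2<\mathscr{G}\cdot,\cdot>$, where care is needed regarding the half-normalization of $<\cdot,\cdot>$ and the definition of $g^{\ast}$. Everything else is routine manipulation with the relations $\mathcal{J}^2=-Id$ and $\mathcal{J}^{\ast}=-\mathcal{J}$, so I expect no genuine obstacle beyond careful bookkeeping.
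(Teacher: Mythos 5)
Your argument is correct, but it is not the route the paper takes. The paper gives no self-contained proof at all: it deduces the proposition from Proposition 1 of \cite{Des} combined with the fact that defining a generalized Hermitian structure is equivalent to defining a linear generalized K\"ahler structure, and then merely remarks that a direct verification using the projection formulas (\ref{prX}) and (\ref{pralpha}) is also possible. You instead reduce both compatibility conditions to the single commutation relation $\mathcal{J}\mathscr{G}=\mathscr{G}\mathcal{J}$: the paper's criterion that compatibility with $E$ means commuting with $\mathscr{G}$ gives one half, and your identity $(g\oplus g^{\ast})(u,v)=2<\mathscr{G}u,v>$ gives the other. That identity does check out with the half-normalized pairing, since $\mathscr{G}(X+\alpha)=g^{-1}(\alpha)+g(X)$ yields $<\mathscr{G}(X+\alpha),Y+\beta>=\frac{1}{2}[g(X,Y)+\beta(g^{-1}\alpha)]$, and the passage from orthogonality for $<\mathscr{G}\cdot\,,\cdot>$ to the commutator, via $<\mathscr{G}\mathcal{J}u,\mathcal{J}v>=<-\mathcal{J}\mathscr{G}\mathcal{J}u,v>$ and $\mathcal{J}^{2}=-Id$, is reversible as you say. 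What your approach buys is self-containedness (no appeal to \cite{Des}) and extra generality: nothing in the argument uses $\Theta=0$ except the explicit computation of $\mathscr{G}$, so the same reasoning shows that for an arbitrary generalized metric, compatibility with $E$ is equivalent to orthogonality with respect to the positive definite metric $2<\mathscr{G}\cdot\,,\cdot>$, which for $\Theta\neq 0$ is the correct ($B$-transformed) replacement for $g\oplus g^{\ast}$. What the paper's route buys is brevity and the placement of the statement inside the structural dictionary (generalized Hermitian structures versus linear generalized K\"ahler structures) that the rest of the paper relies on.
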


This can also be proved by means of (\ref{prX}) and (\ref{pralpha}).

\subsection{Generalized almost complex structures on manifolds. The Courant bracket}

{\it A generalized almost complex structure} on an even-dimensional smooth manifold $M$
is, by definition, an endomorphism ${\mathcal J}$ of the bundle $TM\oplus T^{\ast}M$ with
${\mathcal J}^2=-Id$ which preserves the natural  metric
\begin{equation}\label{metric}
<X+\alpha,Y+\beta>=\frac{1}{2}[\alpha(Y)+\beta(X)],\quad X,Y\in
TM,\quad \alpha,\beta\in T^{\ast}M.
\end{equation}
Such a structure is said to be {\it integrable} or {\it a
generalized complex structure} if its $+i$-egensubbunle of
$(TM\oplus T^{\ast}M)\otimes {\Bbb C}$ is closed under the Courant
bracket \cite{Hit02}. Recall that if $X,Y$ are vector fields on $M$
and $\alpha,\beta$ are $1$-forms, the Courant bracket \cite{Cou} is
defined by the formula
$$
[X+\alpha,Y+\beta]=[X,Y]+{\mathcal L}_{X}\beta-{\mathcal
L}_{Y}\alpha-\frac{1}{2}d(\imath_X\beta-\imath_Y\alpha),
$$
where $[X,Y]$ on the right hand-side is the Lie bracket, ${\mathcal L}$ means the Lie
derivative, and $\imath$ stands for the interior product. Note that the Courant bracket is skew-symmetric
like the Lie bracket but it does not satisfy the Jacobi identity.

\smallskip

\noindent {\bf Examples} \cite{Gu,Gu11}. {\bf 8}. The generalized
complex structure defined by an almost complex structure $J$ on $M$
is integrable if and only if $J$ is integrable.

{\bf 9}. The generalized complex structure determined by a
pre-symplectic form $\omega$ is integrable if and only if $\omega$
is symplectic, i.e. $d\omega=0$.

{\bf 10}. Let $J$ be an almost complex manifold on $M$ and $\pi$ a
(smooth) section of $\Lambda^2T^{1,0}M$. The generalized almost
complex structure $\mathcal{J}$ on $M$ defined by means of $J$ and
$\pi$ is integrable if and only if the almost complex structure $J$
is integrable and the field $\pi$ is holomorphic and Poisson.

\smallskip

As in the case of almost complex structures, the integrability condition
for a generalized almost complex structure ${\mathcal J}$ is equivalent to the vanishing of its
Nijenhuis tensor $N$, the latter being defined by means of the Courant bracket:
$$
N(A,B)=-[A,B]+[{\mathcal J}A,{\mathcal J}B]-{\mathcal J}[{\mathcal
J}A,B]-{\mathcal J}[A,{\mathcal J}B],
$$
where $A$ and $B$ are sections of the bundle $TM\oplus T^{\ast}M$.

Clearly $N(A,B)$ is skew-symmetric. That $N$ is a tensor, i.e. $N(A,fB)=fN(A,B)$ for every smooth function $f$ on $M$,
follows from the following property of the Courant bracket \cite[Proposition 3.17]{Gu}.

\begin{prop}\label{Courant-f}
If $f$ is a smooth function on $M$, then for every sections $A$ and $B$ of $TM\oplus T^{\ast}M$
$$
[A,fB]=f[A,B]+(Xf)B-<A,B>df,
$$
where $X$ is the $TM$-component of $A$.
\end{prop}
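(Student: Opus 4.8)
The plan is to establish the claimed formula
$[A,fB]=f[A,B]+(Xf)B-\langle A,B\rangle\,df$
by a direct computation, decomposing $A=X+\alpha$ and $B=Y+\beta$ into their $TM$- and $T^{\ast}M$-components and substituting $fB=fY+f\beta$ into the definition of the Courant bracket. Since both sides are $\mathbb R$-bilinear in $A$ and $B$, it suffices to track each of the four summands in the bracket
$[A,fB]=[X,fY]+{\mathcal L}_{X}(f\beta)-{\mathcal L}_{fY}\alpha-\tfrac{1}{2}d(\imath_X(f\beta)-\imath_{fY}\alpha)$
and to see how the function $f$ moves in and out of each term, collecting the correction terms that fail to be simply $f$ times the corresponding term of $[A,B]$.

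The key steps are the standard Leibniz-type identities for the three operations appearing in the bracket. First, for the Lie bracket one has $[X,fY]=f[X,Y]+(Xf)Y$, which immediately produces the $f[X,Y]$ piece and part of the $(Xf)B$ correction. Second, for the Lie derivative of a $1$-form along $X$ one uses ${\mathcal L}_{X}(f\beta)=f{\mathcal L}_{X}\beta+(Xf)\beta$, contributing the $f{\mathcal L}_X\beta$ piece and the remaining part of the $(Xf)B$ correction. Third, and this is where the subtlety enters, one must handle ${\mathcal L}_{fY}\alpha$; here the vector field itself carries the function, and the identity ${\mathcal L}_{fY}\alpha=f{\mathcal L}_{Y}\alpha+(\imath_Y\alpha)\,df$ (obtained from Cartan's formula ${\mathcal L}_{fY}=\imath_{fY}d+d\,\imath_{fY}$) is what ultimately generates the $\langle A,B\rangle\,df$ term after being combined with the differential term. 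Finally, for the last summand one computes $\imath_X(f\beta)=f\,\imath_X\beta=f\beta(X)$ and $\imath_{fY}\alpha=f\,\imath_Y\alpha=f\alpha(Y)$, so
$-\tfrac{1}{2}d(\imath_X(f\beta)-\imath_{fY}\alpha)=-\tfrac{1}{2}d\bigl(f(\beta(X)-\alpha(Y))\bigr)$,
and one expands this product rule, separating the $f\,d(\cdots)$ part (which rebuilds the corresponding term of $f[A,B]$) from the $(\beta(X)-\alpha(Y))\,df$ part.

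The main obstacle, and the only genuinely non-mechanical point, is bookkeeping the leftover $df$-terms and checking that they assemble exactly into $-\langle A,B\rangle\,df$ rather than some other multiple of $df$. Collecting the stray pieces, one gets a contribution of $-(\imath_Y\alpha)\,df=-\alpha(Y)\,df$ from the Lie-derivative-along-$fY$ term and a contribution of $-\tfrac{1}{2}(\beta(X)-\alpha(Y))\,df$ from the differential term; their sum is $-\tfrac{1}{2}(\alpha(Y)+\beta(X))\,df$, which by the definition $\langle X+\alpha,Y+\beta\rangle=\tfrac{1}{2}[\alpha(Y)+\beta(X)]$ is precisely $-\langle A,B\rangle\,df$. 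I would emphasize that it is the factor $\tfrac{1}{2}$ in the normalization of the metric together with the $\tfrac{1}{2}$ in the final term of the Courant bracket that makes these coefficients match; a sign or factor slip anywhere in the decomposition would spoil the clean identification, so the care lies entirely in the constants. Once the $df$-terms are verified, the remaining $f$-weighted terms visibly reconstitute $f[A,B]$ and the two $(Xf)$-terms combine into $(Xf)(Y+\beta)=(Xf)B$, completing the proof.
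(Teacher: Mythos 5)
Your proof is correct: the expansion of $[A,fB]$ term by term, the Cartan-formula identity ${\mathcal L}_{fY}\alpha=f{\mathcal L}_{Y}\alpha+\alpha(Y)\,df$, and the final bookkeeping $-\alpha(Y)\,df-\tfrac{1}{2}(\beta(X)-\alpha(Y))\,df=-\tfrac{1}{2}[\alpha(Y)+\beta(X)]\,df=-\langle A,B\rangle\,df$ all check out, with the factor $\tfrac{1}{2}$ in the metric matching the $\tfrac{1}{2}$ in the bracket exactly as you note. The paper itself gives no proof of this proposition (it is quoted from Gualtieri's thesis, Proposition 3.17), and your direct computation is precisely the standard verification that reference supplies.
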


 Let ${\mathcal J}$ be a generalized almost complex structure on a
manifold $M$ and let $\Theta$ be a (skew-symmetric) smooth $2$-form
on $M$. Then, according to Example 5, $e^{\Theta}{\mathcal
J}e^{-\Theta}$ is a generalized almost complex structure on $M$. The
exponential map $e^{\Theta}$ is an automorphism of the Courant
bracket (i.e. $[e^{\Theta}A,e^{\Theta}B]=e^{\Theta}[A,B]$) if and
only if the form $\Theta$ is closed. This key property of the
Courant bracket follows from the following formula given in the
proof of \cite[Proposition 3.23]{Gu}.

\begin{prop}\label{Courant-B-transf}
If $\Theta$ is a $2$-form on $M$, then for every sections  $A=X+\alpha$ and $B=Y+\beta$ of $TM\oplus T^{\ast}M$
$$
[e^{\Theta}A,e^{\Theta}B]=e^{\Theta}[A,B]-\imath_{X}\imath_{Y}d\Theta.
$$
\end{prop}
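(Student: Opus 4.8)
The plan is to expand both sides directly using the definition of the Courant bracket and to reduce the claim to a standard relation of Cartan calculus. First I would record that the $B$-transform acts by $e^{\Theta}(Z+\gamma)=Z+\gamma+\imath_Z\Theta$, so that $e^{\Theta}A$ and $e^{\Theta}B$ have unchanged $TM$-parts $X$, $Y$ and form parts $\alpha+\imath_X\Theta$, $\beta+\imath_Y\Theta$. Substituting these into the defining formula for $[\,\cdot\,,\cdot\,]$ and using linearity of the Lie derivative and the interior product, the bracket $[e^{\Theta}A,e^{\Theta}B]$ separates as $[A,B]$ plus correction terms linear in $\Theta$.

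The key simplification occurs in the $-\tfrac{1}{2}d$ term. Since $\Theta$ is skew, $\imath_X\imath_Y\Theta=-\imath_Y\imath_X\Theta$, so the $\Theta$-part of $\imath_X(\beta+\imath_Y\Theta)-\imath_Y(\alpha+\imath_X\Theta)$ equals $2\,\imath_X\imath_Y\Theta$; the factor $\tfrac{1}{2}$ in the Courant bracket cancels this $2$, producing a single term $-d(\imath_X\imath_Y\Theta)$. Collecting everything, I expect to obtain
$$
[e^{\Theta}A,e^{\Theta}B]=[A,B]+\mathcal{L}_X\imath_Y\Theta-\mathcal{L}_Y\imath_X\Theta-d(\imath_X\imath_Y\Theta).
$$
On the other side, since the $TM$-component of $[A,B]$ is $[X,Y]$, applying the $B$-transform gives $e^{\Theta}[A,B]=[A,B]+\imath_{[X,Y]}\Theta$, so the asserted formula reduces to the purely differential-geometric relation
$$
\mathcal{L}_X\imath_Y\Theta-\mathcal{L}_Y\imath_X\Theta-d(\imath_X\imath_Y\Theta)=\imath_{[X,Y]}\Theta-\imath_X\imath_Y d\Theta.
$$

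This last identity I would verify with Cartan's magic formula $\mathcal{L}_Z=d\imath_Z+\imath_Z d$ together with the commutator relation $\imath_{[X,Y]}=\mathcal{L}_X\imath_Y-\imath_Y\mathcal{L}_X$. Writing $\imath_Y\mathcal{L}_X\Theta=\imath_Y\imath_X d\Theta+\imath_Y d\imath_X\Theta$ and invoking $\imath_Y\imath_X=-\imath_X\imath_Y$ makes the two $\imath_X\imath_Y d\Theta$ contributions cancel, so the right-hand side collapses to $\mathcal{L}_X\imath_Y\Theta-\imath_Y d\imath_X\Theta$; expanding $\mathcal{L}_X\imath_Y\Theta$ once more by the magic formula shows this equals the left-hand side.

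The computation is entirely mechanical; the only care needed is the bookkeeping of signs in the interior products and the way the factor $\tfrac{1}{2}$ in the Courant bracket interacts with the antisymmetry $\imath_X\imath_Y\Theta=-\imath_Y\imath_X\Theta$. There is no genuine obstacle. It is worth noting the conceptual upshot: the correction term $-\imath_X\imath_Y d\Theta$ measures exactly the failure of $e^{\Theta}$ to be a bracket automorphism, and it vanishes for all $X,Y$ precisely when $\Theta$ is closed, which is the property the proposition is designed to isolate.
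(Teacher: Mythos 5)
Your computation is correct and is essentially the proof the paper points to: the paper gives no argument of its own, deferring to Gualtieri's thesis (Proposition 3.23), where the same direct expansion of the Courant bracket on $e^{\Theta}A$, $e^{\Theta}B$, followed by the Cartan-calculus identities $\imath_{[X,Y]}=\mathcal{L}_X\imath_Y-\imath_Y\mathcal{L}_X$ and $\mathcal{L}_Z=d\imath_Z+\imath_Z d$, is carried out. Nothing is missing; the sign bookkeeping and the cancellation of the factor $\tfrac{1}{2}$ against the antisymmetry $\imath_X\imath_Y\Theta=-\imath_Y\imath_X\Theta$ are exactly as you describe.
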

Thus, if the form $\Theta$ is closed,  the structure
$e^{\Theta}{\mathcal J}e^{-\Theta}$ is integrable exactly when the
structure ${\mathcal J}$ is so.

\smallskip

The diffeomorphisms also give symmetries of the Courant bracket \cite{Gu}.

\begin{prop}\label{Courant-diffeo}
If $f:M\to N$ is a diffeomorphism, then the Courant bracket is invariant under the bundle isomorphism
$F=f_{\ast}\oplus (f^{-1})^{\ast}:TM\oplus T^{\ast}M\to TN\oplus T^{\ast}N$:
$$
[F(A),F(B)]=F([A,B]),\quad A,B\in TM\oplus T^{\ast}M.
$$
\end{prop}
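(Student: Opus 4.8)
The plan is to reduce everything to the naturality of the four elementary operations out of which the Courant bracket is built: the Lie bracket of vector fields, the Lie derivative of a $1$-form along a vector field, the exterior derivative, and the interior product. The map $F$ is the pushforward induced by the diffeomorphism $f$: it sends a vector field $X$ to $f_*X$ and a $1$-form $\alpha$ to $(f^{-1})^*\alpha$, the pushforward of forms. Since every section of $TM\oplus T^{\ast}M$ has the form $X+\alpha$, I would write $A=X+\alpha$ and $B=Y+\beta$, substitute $F(A)=f_*X+(f^{-1})^*\alpha$ and $F(B)=f_*Y+(f^{-1})^*\beta$ into the defining formula for the Courant bracket, and compare term by term with $F([A,B])$.

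Then I would invoke the standard naturality identities, all valid because $f$ is a diffeomorphism: $f_*[X,Y]=[f_*X,f_*Y]$ for the Lie bracket; $(f^{-1})^*\circ d=d\circ(f^{-1})^*$ for the exterior derivative; $\mathcal{L}_{f_*X}((f^{-1})^*\beta)=(f^{-1})^*(\mathcal{L}_X\beta)$ for the Lie derivative; and $\imath_{f_*X}((f^{-1})^*\beta)=(f^{-1})^*(\imath_X\beta)$ for the interior product, together with the analogous identities obtained by interchanging the roles of $A$ and $B$. Applying these four identities to the four summands of $[F(A),F(B)]$ converts each term into $f_*$ or $(f^{-1})^*$ of the corresponding term of $[A,B]$: the Lie-bracket term becomes $f_*[X,Y]$, while the three $1$-form terms become $(f^{-1})^*$ applied to $\mathcal{L}_X\beta$, to $\mathcal{L}_Y\alpha$, and to $\frac{1}{2} d(\imath_X\beta-\imath_Y\alpha)$. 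Collecting the $TN$- and $T^{\ast}N$-parts then yields exactly $F([A,B])$.

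The computation is essentially routine and there is no deep obstacle, only bookkeeping. The one point that genuinely requires care is fixing the contravariant/covariant conventions correctly: that pushing a $1$-form forward along $f$ means applying $(f^{-1})^*$ rather than the pullback $f^*$, and that each naturality identity is invoked in the form appropriate to this pushforward. Once that convention is pinned down, every identity used is the classical statement that the operation in question commutes with diffeomorphisms, so the proof amounts to assembling them in the defining formula.
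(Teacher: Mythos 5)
Your proof is correct and complete: each of the four naturality identities you invoke (for the Lie bracket, the Lie derivative, the exterior derivative, and the interior product) does hold under a diffeomorphism, and substituting them into the defining formula of the Courant bracket, with the pushforward of forms correctly taken to be $(f^{-1})^{\ast}$, yields exactly $F([A,B])$. Note that the paper itself gives no proof of this proposition but quotes it from Gualtieri's thesis \cite{Gu}; your computation is precisely the standard argument given there, so the two approaches coincide.
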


Thus, if ${\mathcal J}$ is a generalized almost complex structure on $M$ and $f:M\to N$ is a diffeomorphism,
then $F\circ {\mathcal J}\circ F^{-1}$ is a generalized almost complex structure, which is integrable if and only
if ${\mathcal J}$ is so.

\smallskip

Another important property of the Courant bracket is the following formula proved in \cite[Proposition 3.18]{Gu}.

\begin{prop}\label{d-metric}
Let $A$, $B$, $C$ be sections of the bundle $TM\oplus T^{\ast}M$ and $X$ the $TM$-component of $A$. Then
$$
X<B,C> = <[A,B] + d<A,B>,C> + <B, [A,C] + d<A,C>>.
$$
\end{prop}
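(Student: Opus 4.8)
The plan is to prove the identity by a direct local computation. Write $A=X+\alpha$, $B=Y+\beta$, $C=Z+\gamma$ according to the splitting into $TM$- and $T^{\ast}M$-components, where $X,Y,Z$ are vector fields and $\alpha,\beta,\gamma$ are one-forms. Recalling that $<B,C>=\frac{1}{2}[\beta(Z)+\gamma(Y)]$, the left-hand side is simply $\frac{1}{2}[X(\beta(Z))+X(\gamma(Y))]$, so the entire content of the proposition is to show that the right-hand side reduces to this same expression.

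First I would compute the one-form component of $[A,B]+d<A,B>$. Using $\imath_X\beta=\beta(X)$ and $\imath_Y\alpha=\alpha(Y)$, the term $-\frac{1}{2}d(\imath_X\beta-\imath_Y\alpha)$ coming from the Courant bracket combines with $d<A,B>=\frac{1}{2}d(\alpha(Y)+\beta(X))$ so that all the exact pieces collapse into the single form $d(\alpha(Y))$. Hence the $TM$-component of $[A,B]+d<A,B>$ is $[X,Y]$ and its $T^{\ast}M$-component is ${\mathcal L}_X\beta-{\mathcal L}_Y\alpha+d(\alpha(Y))$; the analogous expression for $[A,C]+d<A,C>$ is obtained by the substitution $Y\mapsto Z$, $\beta\mapsto\gamma$. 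This simplification is the key step, since it removes the awkward $\frac{1}{2}d(\cdots)$ terms before any pairing with the metric is carried out.

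Next I would pair these with $C$ and with $B$ respectively, obtaining for the right-hand side
$$
\tfrac{1}{2}\big[({\mathcal L}_X\beta-{\mathcal L}_Y\alpha+d(\alpha(Y)))(Z)+\gamma([X,Y])\big]+\tfrac{1}{2}\big[\beta([X,Z])+({\mathcal L}_X\gamma-{\mathcal L}_Z\alpha+d(\alpha(Z)))(Y)\big].
$$
Then I would expand every Lie-derivative term through $({\mathcal L}_U\phi)(V)=U(\phi(V))-\phi([U,V])$ and evaluate each $d(\mathrm{function})$ on a vector as a directional derivative. The main obstacle is purely the bookkeeping: keeping track of the $\frac{1}{2}$ factors and of the signs. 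Once everything is expanded, the terms $\beta([X,Z])$, $Y(\alpha(Z))$, $Z(\alpha(Y))$ and $\gamma([X,Y])$ each cancel against their counterpart from the other bracket, while $\alpha([Y,Z])+\alpha([Z,Y])=0$ by skew-symmetry of the Lie bracket. What survives is exactly $X(\beta(Z))+X(\gamma(Y))$, so the right-hand side equals $\frac{1}{2}[X(\beta(Z))+X(\gamma(Y))]=X<B,C>$, which completes the proof.
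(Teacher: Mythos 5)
Your computation is correct. Note that the paper itself does not prove this proposition: it quotes it from Gualtieri's thesis (cited there as [Gu, Proposition 3.18]), so your direct verification is a self-contained substitute for that reference, and it follows the standard route. Your key step checks out: the exact terms do collapse, giving
$$
[A,B]+d<A,B>=[X,Y]+{\mathcal L}_X\beta-{\mathcal L}_Y\alpha+d(\alpha(Y)),
$$
and the eight-term expansion then cancels exactly as you claim, leaving $\frac{1}{2}[X(\beta(Z))+X(\gamma(Y))]=X<B,C>$. One remark that shortens the bookkeeping: by Cartan's formula ${\mathcal L}_Y\alpha=\imath_Y d\alpha+d\,\imath_Y\alpha$, your simplified expression is precisely $[X,Y]+{\mathcal L}_X\beta-\imath_Y d\alpha$, i.e.\ the Dorfman bracket of $A$ and $B$; with that form, the only cancellations needed in the final sum are $\beta([X,Z])$, $\gamma([X,Y])$ against their counterparts and $d\alpha(Y,Z)+d\alpha(Z,Y)=0$, which replaces your six pairwise cancellations by three and makes transparent why the statement holds: it is the metric-invariance (Leibniz) property of the Dorfman bracket.
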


\subsection{Connections induced by a generalized metric}

By definition, a {\it generalized metric} on a manifold $M$ is a
subbundle $E$ of $TM\oplus T^{\ast}M$ such that $rank\,E=dim\,M$ and
the restriction of the metric $<.\,,.>$ to $E$ is positive definite.
Every such a bundle $E$ is uniquely determined by a Riemannian
metric $g$ and a  $2$-form $\Theta$ on $M$. The pair $(M,E)$ will be
called a generalized Riemannian manifold.

Let $E'=E$ be a generalized metric and, as above, denote $E^{\perp}$ by $E''$. For $X\in TM$, set
$$
X''=(pr_{TM}|E'')^{-1}(X)\in E'',
$$
where $pr_{TM}:TM\oplus T^{\ast}M\to TM$ is the natural projection.
It follows from Proposition~\ref{d-metric} that if $B$ and $C$ are sections of the bundle $E$
$$
X<B,C>=<[X'',B]_{E},C>+<B,[X'',C]_E>,
$$
where the subscript $E$ means "the $E$-component with respect to the decomposition $TM\oplus T^{\ast}M=E\oplus E''$". The
latter identity is reminiscent of the condition for a connection on the bundle $E$ to be compatible with the metric
$<.\,,.>$. In fact, we have the following statement \cite {Hit06, Hit10}.

\begin{prop}\label{connection}
If $S$ is a section of $E$, then
$$
\nabla_{X}^{E}S=[X'',S]_E
$$
defines a connection preserving the metric $<.\,,.>$.
\end{prop}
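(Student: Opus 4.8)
The plan is to verify that $\nabla^{E}$ satisfies the defining axioms of a linear connection on the bundle $E$ — additivity in both arguments, $C^{\infty}(M)$-linearity in the $TM$-variable, and the Leibniz rule in the $E$-section variable — and then to observe that metric compatibility has, in effect, already been recorded in the discussion preceding the statement. First I note that $\nabla^{E}_{X}S=[X'',S]_{E}$ is a section of $E$ by construction, and that additivity in $X$ and in $S$ is immediate from the bilinearity of the Courant bracket together with the $C^{\infty}(M)$-linearity of the bundle isomorphism $(pr_{TM}|E'')^{-1}$, which gives $(X_{1}+X_{2})''=X_{1}''+X_{2}''$.

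For the Leibniz rule I would apply Proposition~\ref{Courant-f} with $A=X''$ and $B=S$. Since the $TM$-component of $X''$ is $X$ and $<X'',S>=0$ (because $X''\in E''$, $S\in E$, and $E''=E^{\perp}$), that proposition reduces to $[X'',fS]=f[X'',S]+(Xf)S$. Projecting onto $E$ and using $S_{E}=S$ yields $\nabla^{E}_{X}(fS)=f\nabla^{E}_{X}S+(Xf)S$, as required.

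The tensoriality in $X$ is the one point that needs genuine care, and it is where I expect the main subtlety to lie. Because the Courant bracket is \emph{not} tensorial in its first argument, I would first use $(fX)''=fX''$ and then exploit skew-symmetry, $[fX'',S]=-[S,fX'']$, so as to move the function into the second slot where Proposition~\ref{Courant-f} applies. With $Y$ the $TM$-component of $S$ and $<S,X''>=0$, this gives $[fX'',S]=f[X'',S]-(Yf)X''$. The anomalous term $(Yf)X''$ lies in $E''$ and is therefore annihilated by the projection onto $E$; hence $[fX'',S]_{E}=f[X'',S]_{E}$, that is, $\nabla^{E}_{fX}S=f\nabla^{E}_{X}S$. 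This cancellation — the failure of first-slot tensoriality being confined entirely to $E''$ — is precisely what makes $\nabla^{E}$ well defined as a connection.

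Finally, metric compatibility $X<B,C>=<\nabla^{E}_{X}B,C>+<B,\nabla^{E}_{X}C>$ is the identity displayed just before the proposition: it follows from Proposition~\ref{d-metric} applied with $A=X''$, upon observing that $<X'',B>=<X'',C>=0$ kill the $d<A,\cdot>$ terms, and that pairing with $B,C\in E$ detects only the $E$-component of each Courant bracket. Assembling these verifications completes the proof.
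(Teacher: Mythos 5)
Your proof is correct. Note that the paper itself offers no proof of this proposition: it is quoted from Hitchin's work, and the only ingredient the paper records is the metric-compatibility identity $X<B,C>=<[X'',B]_{E},C>+<B,[X'',C]_E>$, derived from Proposition~\ref{d-metric} in exactly the way you do (the pairings $<X'',B>$, $<X'',C>$ vanish because $E''=E^{\perp}$, and pairing against sections of $E$ kills the $E''$-components of the brackets). What you add beyond the paper's sketch is the verification of the connection axioms, and you identify the right subtlety: the Courant bracket fails to be tensorial in its first slot, and your remedy --- using skew-symmetry to move $f$ into the second slot via Proposition~\ref{Courant-f}, then observing that the anomalous term $(Yf)X''$ lies entirely in $E''$ and is annihilated by the projection $[\,\cdot\,]_E$ --- is precisely why the definition works. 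The Leibniz-rule computation, where $<X'',S>=0$ kills the $df$ term, is likewise correct. In short, your argument soundly fills in the details that the paper delegates to the references.
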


Suppose that $E$ is determined by the Riemannian metric $g$ and the $2$-form $\Theta$, so that $E=\{X+g(X)+\Theta(X):
X\in TM\}$. Transferring the connection $\nabla^E$ from the bundle $E$ to the bundle $TM$ via the isomorphism
$pr_{TM}|E:E\to TM$ we get a connection on $TM$ preserving the metric $g$. Denote this connection by $\nabla$ and let $T$
be its torsion. Then we have \cite {Hit06, Hit10}.:

\begin{prop}\label{torsion}
The torsion $T$ of the connection $\nabla$ is skew-symmetric and is given by
$$
g(T(X,Y),Z)=d\Theta(X,Y,Z),\quad X,Y,Z\in TM.
$$
\end{prop}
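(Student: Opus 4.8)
The plan is to reduce the torsion computation to an inner-product identity on $E$ and then expand the Courant bracket explicitly. First I would record the defining relation of the transferred connection: writing $X'=X+g(X)+\Theta(X)\in E$ and $X''=X-g(X)+\Theta(X)\in E''$ for the two lifts of a vector field $X$ (cf. (\ref{gm})), one has $\nabla_{X}Y=pr_{TM}([X'',Y']_{E})$. Since $pr_{TM}|E\colon E\to TM$ is an isometry between $(E,<.\,,.>|E)$ and $(TM,g)$ and $E''=E^{\perp}$, the $E''$-component of $[X'',Y']$ pairs trivially with $Z'\in E$; hence the key identity
$$
g(\nabla_{X}Y,Z)=<[X'',Y'],Z'>
$$
holds, converting the whole problem into evaluating Courant brackets against elements of $E$.

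Next I would substitute the Courant bracket formula into this identity. Writing $[X'',Y']=[X,Y]+\rho_{XY}$ with $\rho_{XY}$ its $T^{\ast}M$-part, the metric (\ref{metric}) gives $<[X'',Y'],Z'>=\tfrac12[\rho_{XY}(Z)+g([X,Y],Z)+\Theta(Z,[X,Y])]$. I would then split $\rho_{XY}$ into a $g$-part and a $\Theta$-part and rewrite the Lie derivatives of the $1$-forms $g(Y),\Theta(Y),\dots$ by Cartan's formula $\mathcal{L}=d\imath+\imath d$. The expectation, to be verified by direct bookkeeping, is that the $g$-terms together with $g([X,Y],Z)$ reassemble into exactly twice the Koszul formula and so contribute $g(\nabla^{LC}_{X}Y,Z)$, the Levi-Civita connection of $g$, while the $\Theta$-terms together with $\Theta(Z,[X,Y])$ reassemble into the coordinate-free expression for $d\Theta(X,Y,Z)$. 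This yields the decomposition
$$
g(\nabla_{X}Y,Z)=g(\nabla^{LC}_{X}Y,Z)+\tfrac12\,d\Theta(X,Y,Z).
$$

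Finally I would antisymmetrize in $X$ and $Y$. Because $\nabla^{LC}$ is torsion-free, the Levi-Civita part drops out of $g(\nabla_{X}Y,Z)-g(\nabla_{Y}X,Z)-g([X,Y],Z)$, while $\tfrac12 d\Theta(X,Y,Z)-\tfrac12 d\Theta(Y,X,Z)=d\Theta(X,Y,Z)$ by skew-symmetry of the $3$-form $d\Theta$. Hence $g(T(X,Y),Z)=d\Theta(X,Y,Z)$, and the total antisymmetry of $d\Theta$ immediately gives that $T$ is skew-symmetric. The main obstacle is the middle step: the term-by-term matching of the Courant-bracket expansion against the Koszul formula and against the formula for $d\Theta$, keeping careful track of the signs coming from the skew-symmetry of $\Theta$ (so that $\imath_{X}\Theta(Y)=-\Theta(X,Y)$ while $\imath_{Y}\Theta(X)=+\Theta(X,Y)$) and from the fact that $E''=\{X-g(X)+\Theta(X)\}$ carries the opposite sign on its $g$-part. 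Everything else is formal once the key identity and this matching are in place.
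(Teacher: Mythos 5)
Your proposal is correct. Note that the paper itself gives no proof of this proposition: it states the result and attributes it to Hitchin (\cite{Hit06,Hit10}), so there is no internal argument to compare against; your proof in effect reconstructs the standard one from those references. The skeleton is sound at every step: the identity $g(\nabla_{X}Y,Z)=<[X'',Y'],Z'>$ is exactly what the definition $\nabla^{E}_{X}S=[X'',S]_{E}$ gives after using that $pr_{TM}|E$ is an isometry and that the $E''$-component of the bracket is $<\cdot\,,\cdot>$-orthogonal to $Z'\in E$. The middle step you flag as ``bookkeeping'' does work out exactly as you predict: writing $X''=X-g(X)+\Theta(X)$, $Y'=Y+g(Y)+\Theta(Y)$, the Courant bracket formula gives $\imath_{X}\beta-\imath_{Y}\alpha=2g(X,Y)-2\Theta(X,Y)$, and after pairing with $Z'=Z+g(Z)+\Theta(Z)$ (with the factor $\tfrac12$ from the metric (\ref{metric})) the $g$-terms assemble into the Koszul formula and the $\Theta$-terms into the invariant formula for $d\Theta$, yielding
$$
g(\nabla_{X}Y,Z)=g(\nabla^{LC}_{X}Y,Z)+\tfrac12\,d\Theta(X,Y,Z),
$$
from which antisymmetrization gives $g(T(X,Y),Z)=d\Theta(X,Y,Z)$ and the total skew-symmetry of the torsion $3$-form. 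This decomposition also cleanly explains the paper's later remarks that $\nabla=\nabla^{LC}+\tfrac12 T$ and that $\nabla$ reduces to the Levi-Civita connection when $d\Theta=0$, which the paper uses in Section 6.
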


Interchanging the roles of $E$ and $E''=\{X-g(X)+\Theta(X): X\in TM\}$ we can get a connection $\nabla''$ on $TM$
preserving the Riemannian metric $g$ and having torsion $T''$ with $g(T''(X,Y),Z)=-d\Theta(X,Y,Z)$.

If we set $\nabla'=\nabla$, then $\frac{1}{2}(\nabla'+\nabla'')$ is a metric connection with vanishing torsion, so
it is the Levi-Civita connection of the Riemannian manifold $(M,g)$.

\subsection{The space of compatible generalized complex structures}

Let $E$ be a generalized metric on the vector space $T$. As above,
set $E'=E$ and $E''=E^{\perp}$, the orthogonal complement being
taken with respect to the metric $<.\,,.>$ on $T\oplus T^{\ast}$.

Suppose that $T$ is of even dimension $n=2m$. Denote by ${\mathcal G}(E)$ the set of generalized complex structures
compatible with $E$. Equivalently, ${\mathcal G}(E)$ is the set of linear generalized K\"ahler structures, which
determine the generalized metric $E$. This (non-empty) set has the structure of an imbedded submanifold of the vector
space $so(n,n)$ of the endomorphisms of $T\oplus T^{\ast}$, which are skew-symmetric with respect to the metric
$<.\,,.>$. The tangent space of ${\mathcal G}(E)$ at a point ${\mathcal J}$ consists of the endomorphisms $V$ of $T\oplus
T^{\ast}$ anti-commuting with ${\mathcal J}$, skew-symmetric w.r.t. $<.\,,.>$ and such that $VE\subset E$. Such an
endomorphism $V$ sends also $E''$ into itself. Note also that the smooth manifold ${\mathcal G}(E)$ admits a natural
complex structure ${\mathscr J}$ given by $V\to {\mathcal J}\circ V$.

For every ${\mathcal J}\in {\cal G}(E)$, the restrictions $J'={\mathcal J}|E'$ and $J''={\mathcal J}|E''$ are complex
structures on the vector spaces $E'$ and $E''$ compatible with the positive definite metrics $g'= <.\,,.>|E'$ and
$g''=-<.\,,.>|E''$, respectively. Denote by $Z(E')$ and $Z(E'')$ the sets of complex structures on $E'$ and $E''$
compatible with the metrics $g'$ and $g''$. Consider these sets with their natural structures of imbedded submanifolds of
the  vector spaces $so(E',g')$ and $so(E'',g'')$, where $so(E',g')$ is, as usual, the space of $g'$-skew-symmetric
endomorphisms of $E'$, and similarly for $so(E'',g'')$. The tangent space of $Z(E')$ at $J'$ is $T_{J'}Z(E')=\{V'\in
so(E',g'):~V'J'+J'V'=0\}$;  similarly for the tangent space $T_{J''}Z(E'')$. Recall that the manifold $Z(E')$ admits a
complex structure ${\mathscr J}'$ defined by $V'\to J'\circ V'$; similarly $V''\to J''\circ V''$ defines a complex
structure ${\mathscr J}''$ on $Z(E'')$. The map ${\mathcal J}\to (J',J'')$ is a  diffeomorphism sending a tangent vector
$V$ at ${\mathcal J}$ to the tangent vector $(V',V'')$ where $V'=V|E'$ and $V''=V|E''$. Thus ${\mathcal G}(E)\cong
Z(E')\times Z(E'')$ admits four complex structure  defined by
$$
\begin{array}{c}
K_1(V',V'')=(J'\circ V',J''\circ V''),\quad K_2(V',V'')=(J'\circ V',-J''\circ V''),\\[8pt]
K_3=-K_2,\quad K_4=-K_1.
\end{array}
$$
Clearly, the map ${\mathcal J}\to (J',J'')$ is biholomorphic with respect to the complex structures ${\mathscr J}$ on
${\mathcal G}(E)$ and  $K_1$ on $Z(E')\times Z(E'')$.

 Let $G'(S_1',S_2')=-\frac{1}{2}Trace_{g'}\,(S_1'\circ S_2')$ be the standard metric on $so(E',g')$ induced by $g'$;
similarly denote by $G''$ the metric on $so(E'',g'')$ induced by $g''$. Then, as is well-known, $(G',{\mathscr J}')$ and
$(G'',{\mathscr J}'')$ are K\"ahler structures on $Z(E')$ and $Z(E'')$, so $(G=G'+G'',K_{\varepsilon})$,
$\varepsilon=1,...,4$, is a K\"ahler structure on ${\mathcal G}(E)$.

Let $g$ and $\Theta$ be the positive definite metric and the skew-symmetric $2$-form on $T$ determined by $E$, so that
$E=\{X+g(X)+\Theta(X):~X\in T\}$. Denote by $Z(T,g)$ the manifold of all complex structures on $T$ compatible with the
metric $g$ considered as an imbedded submanifold of the space $so(g)$ of $g$-skew-symmetric endomorphisms of $T$.  Endow
$Z(T,g)$ with its natural complex structure and compatible metric. For ${\mathcal J}\in {\mathcal G}(E)$, let $J_1$ and
$J_2$ be the $g$-compatible complex structures on $T$ defined by means of ${\mathcal J}$:
$$
J_1=(pr_{T}|E')\circ\mathcal{J}\circ (pr_{T}|E')^{-1},\quad
J_2=(pr_{T}|E'')\circ\mathcal{J}\circ (pr_{T}|E'')^{-1}.
$$
Then the map ${\mathcal J}\to (J_1,J_2)$ is an isometry of
${\mathcal G}(E)$ onto $Z(T,g)\times Z(T,g)$. Moreover it sends a
tangent vector $V$ at ${\mathcal J}\in {\mathcal G}(E)$ to the
tangent vector $(V_1,V_2)$,  where
$$
V_1=(pr_{T}|E')\circ V \circ (pr_{T}|E')^{-1},\quad
V_2=(pr_{T}|E'')\circ V \circ (pr_{T}|E'')^{-1}.
$$
Hence ${\mathcal J}\to (J_1,J_2)$ is a biholomorphic map. The
manifold $Z(T,g)$ has the homogeneous representation $O(2m)/U(m)$
where $2m=dim\,T$ and the group $O(2m)\cong O(g)$ acts by
conjugation. In particular, it has two connected components, each of
them having the homogeneous representation $SO(2m)/U(m)$. Fix an
orientation on the vector space $T$ and denote by $Z_{\pm}$ the
space of complex structures on $T$ compatible with the metric $g$
and yielding $\pm$ the orientation of $T$. Then $Z_{+}$ and $Z_{-}$
are the connected components of $Z(T,g)$. Thus ${\mathcal G}(E)$ has
four connected components  biholomorphically isometric to
$Z_{+}\times Z_{+}$, $Z_{+}\times Z_{-}$, $Z_{-}\times Z_{+}$,
$Z_{-}\times Z_{-}$. If $dim\,T=4k$, the open subsets ${\mathcal
G}_{+}$ and ${\mathcal G}_{-}$ of  ${\mathcal G}(E)$ biholomorphic
to $(Z_{+}\times Z_{+})\cup (Z_{-}\times Z_{-})$ and $(Z_{+}\times
Z_{-})\cup (Z_{-}\times Z_{+})$ can be described in terms of the
generalized complex structures as follows. Recall first that the
vector space $T\oplus T^{\ast}$ has a canonical orientation; if
$\{a_i\}$ is an arbitrary basis of $T$ and $\{\alpha_i\}$ is its
dual basis, $i=1,...,n$, the orientation of the space $T\oplus
T^{\ast}$ defined by the basis $\{a_i,\alpha_i\}$ does not depend on
the choice of the basis $\{a_i\}$. Let $\mathscr{G}$ be the
endomorphism of $T\oplus T^{\ast}$ determined by the generalized
metric $E$. Then, by \cite[Remark 6.14 and Proposition 4.7]{Gu},
${\mathcal J}\in{\mathcal G}_{\pm}(E)$ if and only if  the complex
structures ${\mathcal J}_1={\mathcal J}$ and ${\cal
J}_2=\mathscr{G}\circ {\mathcal J}_1$ both induce $\pm$ the
canonical orientation of $T\oplus T^{\ast}$. We also note that if
$dim\,T=4k+2$, then, by \cite[Propostion 6.8]{Gu}, one of  the
complex structures ${\mathcal J}_1={\mathcal J}\in {\mathcal G}(E)$
and ${\mathcal J}_2=\mathscr{G}\circ {\mathcal J}_1$ induces the
canonical orientation
of $T\oplus T^{\ast}$, while the other one the opposite orientation. 

\section{Generalized twistor spaces}

  Let $M$ be a smooth manifold of dimension $n=2m$ equipped with a generalized metric $E$ determined
by a Riemannian metric $g$ and a $2$-form $\Theta$ on $M$. Denote
by ${\mathcal G}={\mathcal G}(E)\to M$  the bundle over $M$ whose fibre at a point
$p\in M$ consists of all generalized complex structures on $T_pM$
compatible with the generalized metric $E_p$, the fibre of $E$ at $p$.
We call ${\mathcal G}$  generalized twistor space of the generalized Riemannian manifold $(M,E)$.

Set $E'=E$ and $E''=E^{\perp}$, the orthogonal complement of $E$ in
$TM\oplus T^{\ast}M$ with respect to the metric $<.\,,.>$. Denote by
${\mathcal Z}(E')$ the bundle over $M$ whose fibre at a point $p\in
M$ is constituted of all complex structures on the vector space
$E'_p$ compatible with the positive definite metric $g'=<.\,,.>|E'$.
Define a bundle ${\mathcal Z}(E'')$ in a similar way,  $E''$  being
endowed with  the metric $g''=-<.\,,.>|E''$. Then ${\mathcal G}$ is
identified with the product bundle ${\mathcal Z}(E')\times {\mathcal
Z}(E'')$ by the map ${\mathcal G}_p\ni J\to (J|E'_p,J|E''_p)$.

Suppose we are given metric connections $D'$ and $D''$ on $E'$ and
$E''$, respectively,  and let $D=D'\oplus D''$ be the connection on
$E'\oplus E''=TM\oplus T^{\ast}M$ determined by $D'$ and $D''$.

 The bundle ${\mathcal Z}(E')$ is a subbundle of the vector bundle
$A(E')$ of $g'$-skew-symmetric endomorphisms of $E'$, and similarly
for ${\mathcal Z}(E'')$. Henceforth  we shall consider the bundle
${\mathcal G}\cong {\mathcal Z}(E')\times {\mathcal Z}(E'')$ as a
subbundle of the vector bundle $\pi:A(E')\oplus A(E'')\to M$. The
connection on $A(E')\oplus A(E'')$ induced by the connection
$D=D'\oplus D''$ on $E'\oplus E''$ will again be denoted by $D$. It
is easy to see that the horizontal space of $A(E')\oplus A(E'')$
with respect to $D$ at every point of ${\mathcal G}$ is tangent to
${\mathcal G}$ (cf. the next section). Thus the connection $D$ gives
rise to a splitting ${\cal V}\oplus {\cal H}$ of the tangent bundle
of the bundle ${\cal G}$ into vertical and horizontal parts. Then,
following the standard twistor construction,  we can define four
generalized almost complex structures ${\mathcal J}_{\varepsilon}$
on the manifold ${\mathcal G}$; when we need to indicate explicitly
the bundle $E$ we hall write ${\mathcal J}^{E}_{\varepsilon}$.

 The vertical space ${\mathcal V}_J$ of ${\cal G}$
at a point $J\in {\cal G}$ is the tangent space at $J$ of the fibre
through this point. This fibre is the manifold ${\mathcal
G}(E_{\pi(J)})$, which admits four complex structures
$K_{\varepsilon}$ defined in the preceding section.  We define
${\mathcal J}_{\varepsilon}|({\mathcal V}_J\oplus {\mathcal
V}_J^{\ast})$ to be the generalized complex structure determined by
the complex structure $K_{\varepsilon}$. Thus
$$
{\mathcal
J}_{\varepsilon}=K_{\varepsilon}~ \rm{on}~ {\cal V}_J,\quad   {\mathcal
J}_{\varepsilon}= -K_{\varepsilon}^{\ast} ~\rm{on}~ {\mathcal V}_J^{\ast},\quad
\varepsilon=1,2,3,4.
$$

 The horizontal space ${\cal H}_J$ is isomorphic  to the tangent space $T_{\pi(J)}M$ via the
differential $\pi_{\ast J}$. If $\pi_{{\mathcal H}}$ is the
restriction of $\pi_{\ast}$ to ${\cal H}$, the image of every $A\in
T_pM\oplus T_p^{\ast}M$ under the map $\pi_{{\mathcal H}}^{-1}\oplus
\pi_{{\mathcal H}}^{\ast}$ will be denoted by $A^h$. Thus, for $J\in
{\mathcal G}$, $Z\in T_{\pi(J)}M$ and $\omega\in
T_{\pi(J)}^{\ast}M$, we have $\omega^h_J(Z^h_J)=\omega_{\pi(J)}(Z)$.
The elements of ${\mathcal H}_J^{\ast}$, resp. ${\mathcal
V}_J^{\ast}$, will be considered as $1$-forms on $T_J{\cal G}$
vanishing on ${\mathcal V}_J$, resp. ${\cal H}_J$.

 Now we define a generalized
complex structure ${\mathcal J}$ on the vector space ${\mathcal
H}_J\oplus {\mathcal H}_J^{\ast}$ as the lift of the endomorphism
$J$  of $T_{\pi(J)}M\oplus T_{\pi(J)}^{\ast}M$ by the isomorphism
$\pi_{{\mathcal H}}\oplus (\pi_{{\mathcal H}}^{-1})^{\ast}:{\mathcal
H}_J\oplus {\mathcal H}_J^{\ast}\to T_{\pi(J)}M\oplus
T_{\pi(J)}^{\ast}M$:
$$
{\mathcal J}A^h_J=(JA)^h_J,\quad A\in T_{\pi(J)}M\oplus T^{\ast}_{\pi(J)}M.
$$

Finally,  we set ${\mathcal J}_{\varepsilon}={\mathcal J}$ on
${\mathcal H}\oplus {\mathcal H}^{\ast}$.

\smallskip

\noindent
{\bf Remark} 5. According to Remark 4, if $n\geq 2$, the generalized almost complex structures
${\mathcal J}_{\varepsilon}$ are not $B$-transforms of generalized complex structures induced by complex or
pre-symplectic structures.

\section{Technical lemmas}

To compute the Nijenhuis tensor of the generalized almost complex
structures ${\mathcal J}_{\varepsilon}$, $\varepsilon=1,2,3,4$, on
the twistor space ${\mathcal G}$ we need some preliminary lemmas.

\smallskip

Let $(\mathscr{U},x_1,...,x_{2m})$ be a local coordinate system of
$M$ and $\{Q_1',...,Q_{2m}'\}$, $\{Q_1'',...,Q_{2m}''\}$ orthonormal
frames of $E'$ and $E''$ on $\mathscr{U}$, respectively. Define
sections $S_{ij}'$, $S_{ij}''$, $1\leq i,j\leq {2m}$, of $A(E')$ and
$A(E'')$ by the formulas
\begin{equation}\label{eq Sij}
S_{ij}'Q_k'=\delta_{ik}Q_j' - \delta_{kj}Q_i',\quad S_{ij}''Q_k''=\delta_{ik}Q_j'' - \delta_{kj}Q_i''.
\end{equation}
Then $S_{ij}'$ and  $S_{ij}''$ with $i<j$ form orthonormal frames of
$A(E')$ and $A(E'')$ with respect to the metrics $G'$ and $G''$
defined by
$$G'(a',b')=\displaystyle{-\frac{1}{2}}Trace_{g'}\,(a'\circ b')$$
for $a',b'\in A(E')$, and similarly for $G''$.

For $a=(a',a'')\in A(E')\oplus A(E'')$, set
\begin{equation}\label{coord}
\tilde x_{i}(a)=x_{i}\circ\pi(a),\quad y_{kl}'(a)=G'(a',S_{kl}'\circ\pi(a)),\quad
y_{kl}''(a)=G''(a'',S_{kl}''\circ\pi(a)).
\end{equation}
Then $(\tilde x_{i},y_{jk}', y_{jk}'')$, $1\leq i\leq 2m$, $1\leq j
< k\leq 2m$,  is a local coordinate system on the total space of the
bundle $A(E')\oplus A(E'')$.

Let
\begin{equation}\label{V}
V=\sum_{j<k}[v_{jk}'\frac{\partial}{\partial
y_{jk}'}(J)+v_{jk}''\frac{\partial}{\partial y_{jk}''}(J)]
\end{equation}
be a vertical vector of ${\cal G}$ at a point $J$.  It is convenient
to set $v_{ij}'=-v_{ji}'$, $v_{ij}''=-v_{ji}''$  and
$y_{ij}'=-y_{ji}'$, $y_{ij}''=-y_{ji}''$ for $i\geq j$, $1\leq
i,j\leq {2m}$. Then the endomorphism $V$ of $T_{p}M\oplus
T_{p}^{\ast}M$, $p=\pi(J)$, is determined by
$$
VQ_i'=\sum_{j=1}^{2m} v_{ij}'Q_j',\quad VQ_i''=\sum_{j=1}^{2m}
v_{ij}''Q_j''.
$$
Moreover
\begin{equation}\label{cal J/ver}
{\mathcal J}_{\varepsilon}V=(-1)^{\varepsilon+1}\sum_{j<k}
\sum_{s=1}^{2m}[\pm v_{js}'y_{sk}'\frac{\partial}{\partial y_{jk}'}+
v_{js}''y_{sk}''\frac{\partial}{\partial y_{jk}''}],
\end{equation}
where the plus sign corresponds to $\varepsilon=1,4$ and the minus sign to $\varepsilon=2,3$.

\smallskip

 Note also that, for every $A\in T_{p}M\oplus T_{p}^{\ast}M$, we have

\begin{equation}\label{cal J/hor}
\begin{array}{c}
 A^h=\sum\limits_{i=1}^{2m}[(<A,Q_i'>\circ\pi)Q_i^{'\,h}-(<A,Q_i''>\circ\pi)Q_i^{''\,h}],\\[8pt]
{\mathcal J}A^h=\sum\limits_{i,j=1}^{2m}[(<A,Q_i'>\circ\pi)y_{ij}'Q_j^{'\,h}- (<A,Q_i''>\circ\pi)y_{ij}''Q_j^{''\,h}].
\end{array}
\end{equation}

  For each vector field
$$X=\sum_{i=1}^{2m} X^{i}\frac{\partial}{\partial x_i}$$
on $\mathscr{U}$, the horizontal lift $X^h$ on $\pi^{-1}(\mathscr{U})$ is given by
\begin{equation}\label{Xh}
\begin{array}{c}
X^{h}=\displaystyle{\sum_{l=1}^{2m} (X^{l}\circ\pi)\frac{\partial}{\partial\tilde
x_l}}\\[8pt]
- \displaystyle{\sum_{i<j}\sum_{k<l}
[y_{kl}'(G'(D_{X}S_{kl}',S_{ij}')\circ\pi)\frac{\partial}{\partial
y_{ij}'} +
y_{kl}''(G''(D_{X}S_{kl}'',S_{ij}'')\circ\pi)\frac{\partial}{\partial
y_{ij}''}}].
\end{array}
\end{equation}

     Let $a=(a',a'')\in A(E')\oplus A(E'')$. Denote by $A(E'_{\pi(a)})$ the fiber of $A(E')$ at the point $\pi(a)$
and similarly for $A(E''_{\pi(a)})$. Then (\ref{Xh}) implies that,
under the standard identification of $T_{a}(A(E'_{\pi(a)})\oplus
A(E''_{\pi(a)}))$ with the vector space $A(E'_{\pi(a)})\oplus
A(E''_{\pi(a)})$, we have
\begin{equation}\label{[XhYh]}
[X^{h},Y^{h}]_{a}=[X,Y]^h_a + R(X,Y)a,
\end{equation}
where $R(X,Y)a=R(X,Y)a'+R(X,Y)a''$ is the curvature of the
connection $D$ (for the curvature tensor we adopt the following
definition: $R(X,Y)=D_{[X,Y]}- [D_{X},D_{Y}]$). Note
also that (\ref{V}) and (\ref{Xh}) imply the well-known fact that
\begin{equation}\label{[V,Xh]}
[V,X^h]~\rm{is~ a~ vertical~ vector~ field}.
\end{equation}

\smallskip

\noindent {\it Notation}.  Let $J\in {\mathcal G}$ and $p=\pi(J)$. Take orthonormal bases $\{a_1',...,a_{2m}'\}$,
$\{a_1'',...,a_{2m}''\}$ of $E_p'$, $E_p''$ such that $a_{2l}'=Ja_{2l-1}'$, $a_{2l}''=Ja_{2l-1}''$ for $l=1,...,m$. Let
$\{Q_i'\}$, $\{Q_i''\}$, $i=1,...,2m$, be orthonormal frames of $E'$, $E''$ in the vicinity of the point $p$ such that
$$
Q_i'(p)=a_i',\> Q_i''(p)=a_i'' ~\mbox { and }~ D\, Q_i'|_p=0,\>
D\,Q_i''|_p=0, \quad i=1,...,2m.
$$
Define a section $S=(S',S'')$ of $A(E')\oplus A(E'')$  setting
$$
S'Q_{2l-1}'=Q_{2l}',\quad S''Q_{2l-1}''=Q_{2l}'' ,\quad S'Q_{2l}'=-Q_{2l-1}',\quad S''Q_{2l}''=-Q_{2l-1}'',
$$
 $l=1,...,m$. Then,
$$
S(p)=J, ~ D S|_p=0.
$$
In particular $X^h_J=S_{\ast}X$ for every $X\in T_pM$.

\smallskip

Clearly, the section $S$ takes its values in ${\mathcal G}$, hence
{\it the horizontal space of $A(E')\oplus A(E'')$ with respect to
the connection $D$ at any $J\in {\cal G}$ is tangent to ${\cal G}$}.

 Further on, given a smooth manifold $M$, the natural projections of $TM\oplus T^{\ast}M$
onto $TM$ and $T^{\ast}M$ will be denoted by $\pi_1$ and $\pi_2$,
respectively. The natural projections of ${\mathcal H}\oplus {\mathcal H}^{\ast}$ onto
${\mathcal H}$ and ${\mathcal H}^{\ast}$ will also be denoted by $\pi_1$ and $\pi_2$
when this will not cause confusion. Thus if $\pi_1(A)=X$ for $A\in TM\oplus T^{\ast}M$,
then $\pi_1(A^h)=X^h$ and similarly for $\pi_2(A)$ and $\pi_2(A^h)$.

  We shall use the above notations throughout the next sections.

\smallskip

Note that, although $DS|_p=0$, $D\pi_1(S)$ and $D\pi_2(S)$ may not vanish at the point $p$ since the connection $D$ may
not preserve $TM$ or $T^{\ast}M$.

\smallskip

\begin{lemma}\label {brackets}
If $A$ and $B$ are sections of the bundle $TM\oplus T^{\ast}M$ near $p$, then
\begin{enumerate}
\item[$(i)$]  $[\pi_1(A^h),\pi_1({\mathcal
J}B^h)]_J=[\pi_1(A),\pi_1(SB)]^h_J+R(\pi_1(A),\pi_1(JB))J.$
\vspace{0.2cm}
\item[$(ii)$]
$[\pi_1({\mathcal J}A^h),\pi_1({\mathcal
J}B^h)]_J=[\pi_1(SA),\pi_1(SB)]^h_J+R(\pi_1(JA),\pi_1(JB))J.$
\end{enumerate}
\end{lemma}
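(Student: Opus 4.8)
The plan is to prove both identities by a direct local computation, reducing the Courant brackets on the left to Lie brackets of horizontal lifts, for which we already have formula (\ref{[XhYh]}). The key observation is that the Courant bracket $[A,B]$ for vector fields $A=X$, $B=Y$ (i.e. purely in $TM$) reduces to the Lie bracket $[X,Y]$, and more generally that on horizontal lifts the $TM$-components behave like the lifts of the base objects. First I would note that, by the definition of the lift, $\pi_1(A^h)=X^h$ where $X=\pi_1(A)$, and that $\pi_1(\mathcal{J}B^h)$ is the $TM$-component of $(JB)^h$; at the point $J$ itself, where $S(p)=J$ and $DS|_p=0$, the section $S$ carries $B$ to $JB$ in the fibre while its horizontal lift agrees with $X^h=S_\ast X$. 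Thus $\pi_1(\mathcal{J}B^h)$ should be expressible, near $p$, as $\pi_1(SB)^h$ up to terms that are controlled by $DS$, which vanishes at $p$.

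The main computational step is to evaluate the Lie bracket $[\pi_1(A^h),\pi_1(\mathcal{J}B^h)]$ at $J$ using (\ref{[XhYh]}). Writing $\pi_1(A^h)=X^h$ with $X=\pi_1(A)$ and $\pi_1(\mathcal{J}B^h)=(\pi_1(SB))^h$ modulo vertical and $DS$-corrections, I would apply (\ref{[XhYh]}) to get
$$
[X^h,(\pi_1(SB))^h]_J=[X,\pi_1(SB)]^h_J+R(X,\pi_1(SB))J.
$$
Here the curvature term evaluates against $J$, and since $S(p)=J$, at the point $p$ we have $\pi_1(SB)=\pi_1(JB)=\pi_1(\mathcal{J}B)$, so $R(X,\pi_1(SB))J=R(\pi_1(A),\pi_1(JB))J$, which is exactly the curvature term claimed in $(i)$. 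The Lie-bracket term $[X,\pi_1(SB)]^h_J$ is the horizontal lift of $[\pi_1(A),\pi_1(SB)]$ at $J$, matching the first term of $(i)$. For part $(ii)$ the argument is identical except that one replaces $A^h$ by $\mathcal{J}A^h$, so the first slot of both the Lie bracket and the curvature picks up $S$ respectively $J$, yielding $[\pi_1(SA),\pi_1(SB)]^h_J$ and $R(\pi_1(JA),\pi_1(JB))J$.

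The hard part will be justifying that the $DS$-correction terms and the vertical contributions genuinely drop out at the base point $p$, so that the above substitutions are exact rather than merely approximate. This is where the carefully chosen frame matters: by construction $DQ_i'|_p=0$, $DQ_i''|_p=0$, hence $DS|_p=0$, which kills the derivative-of-$S$ terms that would otherwise appear when commuting $\pi_1$ with $\mathcal{J}$ along horizontal directions. I would also invoke (\ref{[V,Xh]}), that the bracket of a vertical field with a horizontal lift is vertical, to argue that any vertical discrepancy between $\pi_1(\mathcal{J}B^h)$ and $(\pi_1(SB))^h$ contributes only terms whose $TM$-component—or whose relevant projection—vanishes at $J$. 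Once these vanishing statements are in place, the evaluation at $J$ collapses precisely to the two stated formulas, so the remaining work is the bookkeeping of (\ref{cal J/hor}) and (\ref{Xh}) to confirm that the leftover terms are indeed of the controlled type; I would carry that out explicitly only to the extent needed to see the cancellation at $p$.
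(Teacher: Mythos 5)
Your skeleton is essentially the paper's: work with the adapted frame satisfying $DQ_i'|_p=DQ_i''|_p=0$ (hence $DS|_p=0$ and $X^h_J=S_{\ast}X$), trade $\pi_1({\mathcal J}B^h)$ for $(\pi_1(SB))^h$, apply (\ref{[XhYh]}), and use $S(p)=J$ to identify $\pi_1(SB)_p$ with $\pi_1(JB)$ inside the curvature term. The final identities you derive, for both $(i)$ and $(ii)$, are the correct ones.

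The gap is in the step you yourself flag as the hard part: justifying that $[\pi_1(A^h),\pi_1({\mathcal J}B^h)]_J=[X^h,(\pi_1(SB))^h]_J$. You propose to dismiss the discrepancy $W=\pi_1({\mathcal J}B^h)-(\pi_1(SB))^h$ by calling it a \emph{vertical} discrepancy and invoking (\ref{[V,Xh]}). But $W$ is not vertical: at any point $J'$ of the fibre it equals $\bigl(\pi_1(J'B)-\pi_1(S_{\pi(J')}B)\bigr)^h_{J'}$, a horizontal vector, so (\ref{[V,Xh]}) says nothing about it. What is true, and what actually closes the argument, is that $W$ vanishes identically on the image of the section $S$ (where $J'=S(\pi(J'))$), and that $X^h_J=S_{\ast}X_p$ is tangent to that image; a vector field vanishing on a submanifold has zero Lie bracket, at a point of that submanifold, with any vector tangent to the submanifold there, whence $[X^h,W]_J=0$. (Note this tangency is available only at $J$, since $DS$ vanishes only at $p$.) Alternatively one can do what the paper does: expand $[X^h,\pi_1({\mathcal J}B^h)]_J$ and $[X,\pi_1(SB)]_p$ termwise via (\ref{cal J/hor}) and (\ref{sB}), and observe that the derivative terms $X^h_J(y_{ij}')$ and $X_p(y_{ij}'\circ S)$ both die because $DS_{ij}'|_p=DS_{ij}''|_p=0$ and $DS|_p=0$, after which (\ref{[XhYh]}) applied to the frame lifts and bilinearity of $R$ give the result. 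For $(ii)$ you need the same vanishing argument in \emph{both} slots of the bracket, using additionally that $\pi_1({\mathcal J}B^h)_J=(\pi_1(JB))^h_J=S_{\ast}(\pi_1(JB))$ is again tangent to the image of $S$; as written, your proposal does not supply the mechanism that makes these substitutions exact rather than approximate.
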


\begin{proof}
Set $X=\pi_1(A)$. By (\ref{Xh}), we have  $X_J^h=\sum_{l=1}^{2m} X^l(p)\frac{\partial}{\partial\tilde x_l}(J)$ since $D
S_{kl}'|_p=D S_{kl}''|_p=0$, $k,l=1,...,2m$. Then, using (\ref{cal J/hor}), we get
\begin{equation}\label{XhBh}
\begin{array}{c}
[X^h,\pi_1({\cal J}B^h)]_J=\\[8pt]

\sum\limits_{i,j=1}^{2m}[<B,Q_i'>_py_{ij}'(J)[X^h,\pi_1(Q_j')^h]_J +
X_p(<B,Q_i'>)y_{ij}'(J)(\pi_1(Q_j'))^h_J]\\[8pt]

-\sum\limits_{i,j=1}^{2m}[<B,Q_i''>_py_{ij}''(J)[X^h,\pi_1(Q_j'')^h]_J
+ X_p(<B,Q_i''>)y_{ij}''(J)(\pi_1(Q_j''))^h_J].
\end{array}
\end{equation}
We also have
\begin{equation}\label{sB}
SB=\sum_{i,j=1}^{2m}[<B,Q_i'>(y_{ij}'\circ
S)Q_j'-<B,Q_i''>(y_{ij}''\circ S)Q_j''].
\end{equation}
Therefore
\begin{equation}\label{XB}
\begin{array}{c}
[X,\pi_1(SB)]_p=\\[8pt]

\sum\limits_{i,j=1}^{2m}[<B,Q_i'>_py_{ij}'(J)[X,\pi_1(Q_j')]_p+X_p(<B,Q_i'>)y_{ij}'(J)(\pi_1(Q_j'))_p]\\[8pt]

-\sum\limits_{i,j=1}^{2m}[<B,Q_i''>_py_{ij}''(J)[X,\pi_1(Q_j'')]_p+X_p(<B,Q_i''>)y_{ij}''(J)(\pi_1(Q_j'')]_p].
\end{array}
\end{equation}

Now formula $(i)$ follows from (\ref{XhBh}), (\ref{[XhYh]}) and
(\ref{XB}). A similar computation gives $(ii)$.
\end{proof}

\smallskip

 For any (local) section $a=(a',a'')$ of $A(E')\oplus A(E'')$,
denote by $\widetilde a$ the vertical vector field on ${\cal G}$
defined by
\begin{equation}\label{eq tilde a}
\widetilde a_J=(a'_{\pi(J)}+(J|E')\circ a'_{\pi(J)}\circ (J|E'), a''_{\pi(J)}+(J|E'')\circ
a''_{\pi(J)}\circ (J|E'')) .
\end{equation}
Let us note that for every $J\in {\cal G}$ we can find sections
$a_1,...,a_s$, $s=2(m^2-m)$, of $A(E')\oplus A(E'')$ near the point
$p=\pi(J)$ such that $\widetilde a_1,...,\widetilde a_s$ form a
basis of the vertical vector space at each point in a neighbourhood
of $J$.

\vspace{0.1cm}

\begin{lemma}\label {H-V brackets}
Let $J\in {\cal G}$ and let $a$ be a section of $A(E')\oplus A(E'')$
near the point $p=\pi(J)$. Then, for any section $A$ of the bundle
$TM\oplus T^{\ast}M$ near $p$, we have (for the Lie brackets)
\begin{enumerate}
\item[$(i)$] $[\pi_1(A^h),\widetilde a]_J=(\widetilde{D_{\pi_1(A)}a})_J.$
\vspace{0.2cm}
\item[$(ii)$] $[\pi_1(A^h),{\mathcal J}_{\varepsilon}\widetilde a]_J=
K_{\varepsilon}(\widetilde{D_{\pi_1(A)}a})_J.$
\vspace{0.2cm}
\item[$(iii)$] $[\pi_1({\mathcal J}A^h),\widetilde
a]_J=(\widetilde{D_{\pi_1(JA)}a})_J-(\pi_1(\widetilde a(A)))^h_J.$
\vspace{0.2cm}
\item[$(iv)$] $[\pi_1({\mathcal J}A^h),{\mathcal J}_{\varepsilon}\widetilde
a]_J=K_{\varepsilon}
(\widetilde{D_{\pi_1(JA)}a})_J-(\pi_1((K_{\varepsilon}\widetilde
a)(A)))^h_J.$
\end{enumerate}
\end{lemma}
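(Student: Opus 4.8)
The plan is to compute each bracket directly in the local coordinate description set up just before the lemma, exploiting the normalization $DS|_p = 0$, $D Q_i'|_p = 0$, $D Q_i''|_p = 0$, which kills all the connection-coefficient terms in the horizontal lift formula (\ref{Xh}) when everything is evaluated at $J$. Writing $X = \pi_1(A)$, the first thing I would record is that at $J$ the horizontal lift reduces to $X^h_J = \sum_l X^l(p)\,\partial/\partial \tilde x_l$, exactly as in the proof of Lemma~\ref{brackets}. The vertical field $\widetilde a$ is given coordinate-free by (\ref{eq tilde a}), so I would express $\widetilde a$ in the coordinates $(y_{jk}', y_{jk}'')$ using the frames $\{Q_i'\}$, $\{Q_i''\}$, and then compute the Lie brackets by differentiating coefficients.

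For $(i)$, the key observation is that $[X^h, \widetilde a]$ is vertical by (\ref{[V,Xh]}), so only the action of $X^h$ on the fiber-coordinates of $\widetilde a$ survives. Because $X^h_J$ has no vertical component at $J$ and because $D Q_i'|_p = D Q_i''|_p = 0$, differentiating the coefficient functions of $\widetilde a$ along $X^h$ at $J$ reproduces precisely the covariant derivative: $[X^h, \widetilde a]_J = (\widetilde{D_X a})_J$. Concretely, the quadratic-in-$J$ structure of (\ref{eq tilde a}) is what makes the derivative land back in the $\widetilde{(\,\cdot\,)}$ form, since $D S|_p = 0$ means $J = S(p)$ is ``covariantly constant to first order'' and only the $D a$ piece contributes. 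Part $(ii)$ then follows from $(i)$ by applying the fiberwise complex structure: since ${\mathcal J}_\varepsilon \widetilde a = \widetilde{K_\varepsilon a}$ on the vertical space (by the definition of ${\mathcal J}_\varepsilon$ via $K_\varepsilon$ and formula (\ref{cal J/ver})), and since $K_\varepsilon$ is $D$-parallel as a fiberwise structure, one gets $[X^h, {\mathcal J}_\varepsilon \widetilde a]_J = (\widetilde{D_X(K_\varepsilon a)})_J = K_\varepsilon (\widetilde{D_X a})_J$.

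Parts $(iii)$ and $(iv)$ are the substantive ones, because $\pi_1({\mathcal J}A^h)$ is no longer purely horizontal in the naive sense: by (\ref{cal J/hor}) its coefficients involve the fiber coordinates $y_{ij}'$, $y_{ij}''$, so the bracket with a vertical field picks up an extra term where $\widetilde a$ differentiates those coefficients. I would write $\pi_1({\mathcal J}A^h) = \sum_{i,j}[(\langle A, Q_i'\rangle \circ \pi)\, y_{ij}'\, \pi_1(Q_j'^{\,h}) - (\langle A, Q_i''\rangle \circ \pi)\, y_{ij}''\, \pi_1(Q_j''^{\,h})]$ from (\ref{cal J/hor}), and then expand $[\,\cdot\,, \widetilde a]$ by the Leibniz rule. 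The derivative of $\widetilde a$ along the horizontal part, evaluated at $J$, produces the covariant-derivative term $(\widetilde{D_{\pi_1(JA)} a})_J$ exactly as in $(i)$ (here $\pi_1(JA) = J\pi_1 A$ acts as the new horizontal direction after applying ${\mathcal J}$). The genuinely new contribution comes from $\widetilde a$ acting on the coefficient functions $y_{ij}'$, $y_{ij}''$: this differentiation, combined with the definition of how $\widetilde a$ acts on $A$ via $\widetilde a(A)$, yields precisely the correction $-(\pi_1(\widetilde a(A)))^h_J$.

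The main obstacle, and where I would spend the most care, is bookkeeping the fiber-derivative term in $(iii)$: one must verify that $\widetilde a$ acting on the $y$-coefficients of $\pi_1({\mathcal J}A^h)$ reassembles exactly into $\pi_1(\widetilde a(A))$ lifted horizontally, with the correct sign. This is a direct but delicate computation using $\partial y_{ij}'/\partial y_{kl}' = \delta_{(ij),(kl)}$ (with the antisymmetry convention $y_{ij}' = -y_{ji}'$), the values $y_{ij}'(J)$ determined by $J a_{2l-1}' = a_{2l}'$, and the identity $\widetilde a_J = a + J a J$ on $E'$ and $E''$ that encodes the tangency of $\widetilde a$ to the fiber ${\mathcal G}$. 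Once $(iii)$ is established, $(iv)$ follows by the same ${\mathcal J}_\varepsilon \widetilde a = \widetilde{K_\varepsilon a}$ substitution used for $(ii)$, applied to both the covariant-derivative term and the correction term, giving $K_\varepsilon(\widetilde{D_{\pi_1(JA)} a})_J - (\pi_1((K_\varepsilon \widetilde a)(A)))^h_J$.
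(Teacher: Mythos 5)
Your proposal is correct and follows essentially the same route as the paper's own proof: the paper sets up exactly these normalized coordinates ($D Q_i'|_p=D Q_i''|_p=0$, so that $X^h_J=\sum_l X^l(p)\frac{\partial}{\partial\tilde x_l}(J)$ and $[X^h,\frac{\partial}{\partial y_{ij}'}]_J=[X^h,\frac{\partial}{\partial y_{ij}''}]_J=0$), writes $\widetilde a$ in the fibre coordinates via (\ref{tilde aij}), identifies $X_p(a_{ij}')$ with the components of $D_{X_p}a$ as in (\ref{Xa_ij}), and then concludes by the same Leibniz-rule computations using (\ref{cal J/ver}) and (\ref{cal J/hor}), which is precisely your plan, including your correct isolation of the fibre-derivative term $\widetilde a(y_{ij}')$ producing $-(\pi_1(\widetilde a(A)))^h_J$ in $(iii)$. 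One small caution on your shortcut for $(ii)$: the identity ${\mathcal J}_{\varepsilon}\widetilde a=\widetilde{K_{\varepsilon}a}$ is not an equality of vector fields near $J$ (the two sides agree only along the image of the section $S$), and agreement at a single point does not determine a Lie bracket; the shortcut is legitimate either because $X^h_J=S_{\ast}X$ is tangent to the image of $S$, or, more simply, because in coordinates $X^h_J(y_{sk}')=X^h_J(y_{sk}'')=0$, so bracketing the expression (\ref{cal J/ver}) with $X^h$ at $J$ only differentiates the coefficients $\widetilde a_{js}'$, $\widetilde a_{js}''$, which is the computation the paper intends.
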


\begin{proof}
Let $a'(Q_i')=\sum_{j=1}^{2m}a_{ij}'Q_j'$, $a''(Q_i'')=\sum_{j=1}^{2m}a_{ij}''Q_j''$,\, $i=1,...,2m$. Then, in the local
coordinates of $A(E')\oplus A(E'')$ introduced above,
$$
\widetilde a=\sum_{i<j}[\widetilde a_{ij}'\frac{\partial}{\partial
y_{ij}'}+\widetilde a_{ij}''\frac{\partial}{\partial y_{ij}''}],
$$
where
\begin{equation}\label{tilde aij}
\widetilde a_{ij}'=a_{ij}'\circ\pi+\sum_{k,l=1}^{2m}y_{ik}'(a_{kl}'\circ\pi)y_{lj}',\quad
a_{ij}''=a_{ij}''\circ\pi+\sum_{k,l=1}^{2m}y_{ik}''(a_{kl}''\circ\pi)y_{lj}''.
\end{equation}
Let us also note that for every vector field $X$ on $M$ near the
point $p$, we have in view of (\ref{Xh})
$$
\begin{array}{c}
X_J^h=\displaystyle{\sum_{i=1}^{2m}
X^i(p)\frac{\partial}{\partial\tilde
x_i}(J)},\\[8pt]
\displaystyle{[X^h,\frac{\partial}{\partial
y_{ij}'}]_J=[X^h,\frac{\partial}{\partial y_{ij}''}]_J}=0
\end{array}
$$
since $D S_{ij}'|_p=D S_{ij}''|_p=0$. Moreover,
\begin{equation}\label{Xa_ij}
(D_{X_p}a')(Q_i')=\sum_{j=1}^{2m}X_p(a_{ij}')Q_j', \quad
(D_{X_p}a'')(Q_i'')=\sum_{j=1}^{2m}X_p(a_{ij}'')Q_j''
\end{equation}
since $D Q_i'|_p=D Q_i''|_p=0$. Now the lemma follows by simple
computations making use of (\ref{cal J/ver}) and (\ref{cal J/hor}).
\end{proof}

\begin{lemma}\label {Lie deriv}
Let $A$ and $B$ be sections of the bundle $TM\oplus T^{\ast}M$ near
$p$, and let $Z\in T_pM$, $W\in {\cal V}_J$. Then
\begin{enumerate}
\item[$(i)$]
\hspace{0.2cm}$({\mathcal L}_{\pi_1(A^h)}{\pi_2(B^h)})_J=({\mathcal
L}_{\pi_1(A)}{\pi_2(B)})^h_J.$

\vspace{0.3cm}

\item[$(ii)$]
\hspace{0.2cm}$({\mathcal L}_{\pi_1(A^h)}{\pi_2({\mathcal
J}B^h)})_J=({\mathcal L}_{\pi_1(A)}{\pi_2(SB)})^h_J.$

\vspace{0.3cm}

\item[$(iii)$]
$
\begin{array}{lll}
({\mathcal L}_{\pi_1({\mathcal J}A^h)}\pi_2(B^h))_J(Z^h+W)=\\[6pt]
({\mathcal L}_{\pi_1(SA)}\pi_2(B))^h_J(Z^h)+(\pi_2(B))_p(\pi_1(WA)).
\end{array}
$

\vspace{0.3cm}

\item[$(iv)$]
$
\begin{array}{lll}
({\mathcal L}_{\pi_1({\mathcal J}A^h)}\pi_2({\mathcal
J}B^h))_J(Z^h+W)=\\[6pt]
({\mathcal L}_{\pi_1(SA)}\pi_2(SB))^h_J(Z^h)+(\pi_2(JB))_p(\pi_1(WA)).
\end{array}
$
\end{enumerate}
\end{lemma}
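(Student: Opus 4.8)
The statement to be proved is Lemma~3, a set of four formulas for the Lie derivatives $\mathcal{L}_{\pi_1(A^h)}\pi_2(B^h)$ and their $\mathcal{J}$-twisted analogues, evaluated at the point $J\in\mathcal{G}$. The plan is to exploit the same local normal-frame setup used in the proofs of Lemma~1 (\texttt{brackets}) and Lemma~2 (\texttt{H-V brackets}): work at a fixed $J$, use orthonormal frames $\{Q_i'\}$, $\{Q_i''\}$ with $DQ_i'|_p=DQ_i''|_p=0$ and the associated section $S$ with $S(p)=J$, $DS|_p=0$. The basic identity I would use throughout is the Cartan-type formula relating a Lie derivative of a $1$-form along a vector field to Lie brackets, namely $(\mathcal{L}_U\xi)(W)=U(\xi(W))-\xi([U,W])$, which converts each statement into a combination of the horizontal-horizontal brackets computed in Lemma~1 and the horizontal-vertical brackets computed in Lemma~2.

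For part $(i)$, $\pi_1(A^h)=X^h$ with $X=\pi_1(A)$ is horizontal and projectable, and $\pi_2(B^h)$ is (by the remark after the Notation) the horizontal lift of the $1$-form $\pi_2(B)$; since $\pi$ is the bundle projection, the Lie derivative of a lifted $1$-form along a horizontal lift projects to the downstairs Lie derivative, giving the clean formula $(\mathcal{L}_{\pi_1(A)}\pi_2(B))^h_J$. Part $(ii)$ is the same computation after replacing $B^h$ by $\mathcal{J}B^h$; here the key input is formula (\ref{cal J/hor}) for $\mathcal{J}A^h$ together with the fact that at $J$ we have $y'_{ij}(J)$, $y''_{ij}(J)$ encoding the action of $S$, so that $\pi_2(\mathcal{J}B^h)$ is the horizontal lift of $\pi_2(SB)$ up to terms that vanish at $p$ because $DS|_p=0$. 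I would evaluate $(\mathcal{L}_{X^h}\pi_2(\mathcal{J}B^h))_J$ on an arbitrary horizontal lift $Z^h$ and on an arbitrary vertical vector, using Lemma~1$(i)$ and Lemma~2$(i)$ to identify the resulting brackets, and confirm the vertical contribution drops out.

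Parts $(iii)$ and $(iv)$ are where the horizontal lift is replaced by $\pi_1(\mathcal{J}A^h)=\pi_1(SA)^h$ evaluated on a sum $Z^h+W$ of a horizontal and a vertical tangent vector, and this is where the genuinely new terms appear. The strategy is again to expand $(\mathcal{L}_{\pi_1(\mathcal{J}A^h)}\pi_2(B^h))_J(Z^h+W)$ via $(\mathcal{L}_U\xi)(V)=U(\xi(V))-\xi([U,V])$. Feeding in $V=Z^h$ produces the horizontal main term $(\mathcal{L}_{\pi_1(SA)}\pi_2(B))^h_J(Z^h)$ exactly as in $(i)$--$(ii)$, using Lemma~1 for the horizontal brackets. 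Feeding in $V=W$ is the source of the extra term: here I would use Lemma~2$(iii)$, $[\pi_1(\mathcal{J}A^h),W]=[\pi_1(SA)^h,\widetilde a]=(\widetilde{D_{\pi_1(SA)}a})_J-(\pi_1(\widetilde a(A)))^h_J$ after expressing $W=\widetilde a_J$, so that the bracket $[\pi_1(\mathcal{J}A^h),W]$ has a horizontal component $-(\pi_1(WA))^h_J$; applying $\pi_2(B^h)$ to this horizontal component yields precisely $(\pi_2(B))_p(\pi_1(WA))$, with the vertical part of the bracket annihilated by $\pi_2(B^h)\in\mathcal{H}^\ast$. Part $(iv)$ combines this with the $\mathcal{J}$-twist on $B$ as in $(ii)$, replacing $\pi_2(B)$ by $\pi_2(SB)$ in the horizontal term and, crucially, producing $(\pi_2(JB))_p$ rather than $(\pi_2(B))_p$ in the extra term, which comes from evaluating the lifted twisted form $\pi_2(\mathcal{J}B^h)$ on the horizontal part of the bracket at the base point where $S=J$.

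\textbf{Main obstacle.}
The delicate point, and the step I expect to require the most care, is the precise bookkeeping of the extra term $(\pi_2(B))_p(\pi_1(WA))$ in $(iii)$ and $(\pi_2(JB))_p(\pi_1(WA))$ in $(iv)$. The subtlety is that the Lie derivative is \emph{not} tensorial in the differentiating field, so one cannot simply substitute $\pi_1(\mathcal{J}A^h)=\pi_1(SA)^h$ and declare the answer; the correct normalization at $p$ (where $S(p)=J$, so that the twist by $S$ becomes a twist by $J$ exactly in the coefficient $\pi_2(JB)$ versus $\pi_2(SB)$) must be tracked through the vertical bracket in Lemma~2$(iii)$--$(iv)$ and through formula (\ref{cal J/hor}). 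The fact that $D$ need not preserve $TM$ or $T^\ast M$ (the warning just before Lemma~1) means $D\pi_1(S)$ and $D\pi_2(S)$ may be nonzero at $p$, so one must verify that these do not contaminate the horizontal main terms; they do not, because those terms are assembled from $DS|_p=0$, but the check is the place where a sign or a $J$ could be lost.
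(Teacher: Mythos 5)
Your proposal is correct and is essentially the paper's own proof written out in detail: the paper derives $(i)$--$(iv)$ directly from the bracket identities (\ref{[XhYh]}), (\ref{[V,Xh]}) and the frame formulas (\ref{cal J/hor}), (\ref{sB}), which is exactly the Cartan-formula computation you describe, with Lemmas~\ref{brackets} and \ref{H-V brackets} serving as packaged versions of those same identities. The only blemish is the intermediate equality $[\pi_1(\mathcal{J}A^h),W]=[\pi_1(SA)^h,\widetilde a]$ in your treatment of $(iii)$: taken literally, the bracket of $\widetilde a$ with the honest horizontal lift $(\pi_1(SA))^h$ would be purely vertical by (\ref{[V,Xh]}), so that middle term should be deleted --- but since you then invoke Lemma~\ref{H-V brackets}$(iii)$ with its correct right-hand side (a point your ``main obstacle'' paragraph shows you understand), the horizontal component $-(\pi_1(WA))^h_J$ and hence the extra terms in $(iii)$ and $(iv)$ come out right.
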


\begin{proof}
Formula $(i)$  follows from (\ref{[XhYh]}) and (\ref{[V,Xh]});
$(ii)$ is a consequence of $(i)$, (\ref{cal J/hor}) and (\ref{sB}).
A simple computations involving (\ref{cal J/hor}), (\ref{[XhYh]}),
(\ref{[V,Xh]}) and (\ref{sB}) gives formula $(iii)$; $(iv)$ follows
from $(iii)$, (\ref{cal J/hor}) and (\ref{sB}).
\end{proof}

\vspace{0.1cm}

The proofs of the next lemmas are also easy and will be omitted.

\begin{lemma}\label {Half-diff}
Let $A$ and $B$ are sections of the bundle $TM\oplus T^{\ast}M$ near
$p$. Let $Z\in T_pM$ and $W\in {\cal V}_J$. Then
\begin{enumerate}
\item[$(i)$]
\hspace{0.2cm}$(d\>\imath_{\pi_1(A^h)}\pi_2(B^h))_J=(d\>\imath_{\pi_1(A)}\pi_2(B))^h_J.$

\vspace{0.3cm}

\item[$(ii)$]
$
\begin{array}{lll}
(d\>\imath_{\pi_1(A^h)}\pi_2({\mathcal J} B^h))_J(Z^h+W)= \\[6pt]
(d\>\imath_{\pi_1(A)}\pi_2(SB))^h_J(Z^h)+ (\pi_2(WB))_p(\pi_1(A)).
\end{array}
$

\vspace{0.3cm}

\item[$(iii)$]
$
\begin{array}{lll}
(d\>\imath_{\pi_1({\mathcal J}A^h)}\pi_2(B^h))_J(Z^h+W)= \\[6pt]
(d\>\imath_{\pi_1(SA)}\pi_2(B))^h_J(Z^h)+ (\pi_2(B))_p(\pi_1(WA)).
\end{array}
$

\vspace{0.3cm}

\item[$(iv)$]
$
\begin{array}{lll}
(d\>\imath_{\pi_1({\mathcal J}A^h)}\pi_2({\mathcal J}B^h))_J(Z^h+W)=\\[6pt]
(d\>\imath_{\pi_1(SA)}\pi_2(SB))^h_J(Z^h)+ (\pi_2(WB))_p(\pi_1(JA))+
(\pi_2(JB))_p(\pi_1(WA).
\end{array}
$

\end{enumerate}
\end{lemma}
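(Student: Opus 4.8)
The plan is to carry out all four computations in the local frame introduced before Lemma~\ref{brackets}, evaluating everything at the point $J$, where the section $S$ satisfies $S(p)=J$ and $DS|_p=0$. Two consequences of this choice are used throughout. First, by (\ref{Xh}) together with $DS_{kl}'|_p=DS_{kl}''|_p=0$, every horizontal lift reduces at $J$ to $X^h_J=\sum_l X^l(p)\frac{\partial}{\partial\tilde x_l}(J)$, and the horizontal derivatives of the fibre coordinates $y_{ij}'$, $y_{ij}''$ vanish at $J$. Second, the duality relation $\omega^h_J(Z^h_J)=\omega_{\pi(J)}(Z)$ identifies the pairing of a lifted $1$-form with a lifted vector field as the pullback by $\pi$ of the corresponding pairing on $M$.

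For $(i)$ this already settles the matter: the function $\imath_{\pi_1(A^h)}\pi_2(B^h)$ equals $(\imath_{\pi_1(A)}\pi_2(B))\circ\pi$, a pullback from $M$, so its differential vanishes on vertical vectors and on a horizontal vector $Z^h$ gives $Z(\imath_{\pi_1(A)}\pi_2(B))$; this is exactly $(d\,\imath_{\pi_1(A)}\pi_2(B))^h_J$.

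For $(ii)$--$(iv)$ the new feature is that, by (\ref{cal J/hor}), the object ${\mathcal J}B^h$ (and ${\mathcal J}A^h$) carries coefficients that are the fibre coordinates $y_{ij}'$, $y_{ij}''$, so the function whose differential we compute is not a pullback but a sum $\sum (f_{ij}\circ\pi)\,y_{ij}'$ (with an analogous $E''$-part). Differentiating at $J$ splits into two contributions. On a horizontal vector $Z^h$ only the factors $f_{ij}\circ\pi$ differentiate, because the horizontal derivatives of the $y_{ij}$ vanish at $J$; using $y_{ij}'(J)=(y_{ij}'\circ S)(p)$ and comparing with the expansion (\ref{sB}) of $SB$ (resp. $SA$), this horizontal part is exactly $(d\,\imath_{\pi_1(A)}\pi_2(SB))^h_J(Z^h)$ (resp.\ with $SA$). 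On a vertical vector $W$ only the factors $y_{ij}$ differentiate, and $dy_{ij}'(W)=v_{ij}'$, $dy_{ij}''(W)=v_{ij}''$ are the components of $W$ viewed as an endomorphism through (\ref{V}).

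The only step requiring care is to recognize these vertical contributions as the stated correction terms. Writing $B=B_{E'}+B_{E''}$ in the orthonormal frame gives $B_{E'}=\sum_i<B,Q_i'>Q_i'$ and $B_{E''}=-\sum_i<B,Q_i''>Q_i''$ (the sign coming from $g''=-<.\,,.>|E''$), whence $WB=\sum_{i,j}<B,Q_i'>v_{ij}'Q_j'-\sum_{i,j}<B,Q_i''>v_{ij}''Q_j''$; projecting by $\pi_2$ and pairing with $\pi_1(A)$ at $p$ reproduces precisely the term $(\pi_2(WB))_p(\pi_1(A))$ of $(ii)$. Formulas $(iii)$ and $(iv)$ follow in the same way once $A^h$ is replaced by ${\mathcal J}A^h$: the fibre-coordinate dependence of $\pi_1({\mathcal J}A^h)$ produces, on evaluating against $W$, the terms $(\pi_2(B))_p(\pi_1(WA))$ and, in $(iv)$ where both slots are twisted, the sum $(\pi_2(WB))_p(\pi_1(JA))+(\pi_2(JB))_p(\pi_1(WA))$. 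The main obstacle is thus purely bookkeeping---keeping the $E'$- and $E''$-contributions with their opposite metric signs, and the two vertical slots, correctly separated; since $DS|_p=0$ removes all connection terms, no computation beyond those already made in Lemmas~\ref{H-V brackets} and~\ref{Lie deriv} is needed.
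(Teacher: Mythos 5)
Your proof is correct and is essentially the argument the paper intends: the paper itself omits the proof of this lemma (declaring it ``easy''), and your computation uses exactly the framework of the surrounding lemmas --- the coordinates $(\tilde x_i,y_{jk}',y_{jk}'')$, formulas (\ref{cal J/hor}), (\ref{Xh}), (\ref{sB}), and the normalizations $DS|_p=0$, $DQ_i'|_p=DQ_i''|_p=0$ at $p=\pi(J)$. You correctly isolate the two essential points: the horizontal derivatives of the fibre coordinates vanish at $J$ (so that, since $S_{\ast}Z=Z^h_J$, the horizontal contribution matches $(d\,\imath_{\pi_1(SA)}\pi_2(SB))^h_J$ and its variants), and the vertical derivatives pick out the components $dy_{ij}'(W)$, $dy_{ij}''(W)$ of $W$, which, with the sign forced by $g''=-<.\,,.>|E''$ in the frame expansion of $B_{E''}$, assemble precisely into the correction terms $(\pi_2(WB))_p(\pi_1(A))$, $(\pi_2(B))_p(\pi_1(WA))$ and, in $(iv)$, their two-sided sum.
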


\begin{lemma}\label {H-V Lie der&diff}
Let $A$ be a section of the bundle $TM\oplus T^{\ast}M$ and $V$ a vertical vector field
on ${\cal G}$. Then
\begin{enumerate}
\item[$(i)$] ${\mathcal L}_{V}\pi_2(A^h)=0$; \hspace{0.2cm}$\imath_{V}\pi_2(A^h)=0$.
\vspace{0.2cm}
\item[$(ii)$] ${\mathcal L}_{V}\pi_2({\mathcal J}A^h)=\pi_2((VA)^h)$;
\hspace{0.2cm} $\imath_{V}\pi_2({\mathcal J}A^h)=0$.
\end{enumerate}
\end{lemma}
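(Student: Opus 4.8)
The plan is to reduce everything to two elementary facts about the splitting $T{\mathcal G}={\mathcal H}\oplus{\mathcal V}$. First, the covectors in ${\mathcal H}^{\ast}$ annihilate vertical vectors by their very definition (elements of ${\mathcal H}^{\ast}$ are the $1$-forms vanishing on ${\mathcal V}$). Second, $[V,X^h]$ is vertical for every horizontal lift $X^h$ by (\ref{[V,Xh]}), while the bracket of two vertical fields is again vertical because the fibres of ${\mathcal G}\to M$ are submanifolds. Since $\pi_2(A^h)$ and $\pi_2({\mathcal J}A^h)$ both lie in ${\mathcal H}^{\ast}$ and $V$ is vertical, the two interior products $\imath_{V}\pi_2(A^h)$ and $\imath_{V}\pi_2({\mathcal J}A^h)$ vanish immediately; this disposes of the second assertion in each of $(i)$ and $(ii)$.

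For the Lie derivatives I would evaluate the $1$-forms on a spanning family of vector fields, using $({\mathcal L}_{V}\eta)(W)=V(\eta(W))-\eta([V,W])$. At any $J\in{\cal G}$ the horizontal lifts $X^h$ of vector fields on $M$ together with the vertical fields $\widetilde a$ of (\ref{eq tilde a}) span $T_J{\cal G}$, so it suffices to test on these two types of $W$. When $W=\widetilde a$ is vertical, $\eta(\widetilde a)=0$ and $[V,\widetilde a]$ is vertical, so both terms vanish; when $W=X^h$, the term $\eta([V,X^h])$ vanishes as well since $[V,X^h]$ is vertical and $\eta\in{\mathcal H}^{\ast}$. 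Hence in every case $({\mathcal L}_{V}\eta)(W)=V(\eta(W))$, and the whole computation collapses to differentiating the function $\eta(X^h)$ along $V$.

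For $(i)$, with $\eta=\pi_2(A^h)$, the defining property $\omega^h_J(Z^h_J)=\omega_{\pi(J)}(Z)$ gives $\eta(X^h)=(\pi_2(A)(X))\circ\pi$, a function pulled back from $M$; as $\pi_{\ast}V=0$ for vertical $V$, it is annihilated by $V$, so ${\mathcal L}_{V}\pi_2(A^h)=0$. For $(ii)$, with $\eta=\pi_2({\mathcal J}A^h)=\pi_2((JA)^h)$, the same property gives $\eta(X^h)|_J=(\pi_2(J A_{\pi(J)}))(X_{\pi(J)})$. The key observation is that, as $J$ ranges over the fibre ${\mathcal G}_{\pi(J)}$ (a submanifold of the space of endomorphisms of the fixed space $T_{\pi(J)}M\oplus T^{\ast}_{\pi(J)}M$), this is the restriction of the linear map $L\mapsto(\pi_2(LA))(X)$, whose derivative in the direction of the vertical tangent vector $V$ — itself such an endomorphism — is $(\pi_2(VA))(X)$. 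Thus $V(\eta(X^h))|_J=(\pi_2(V_{J}A))(X)=\pi_2((VA)^h)(X^h)|_J$, and since $\pi_2((VA)^h)$ also lies in ${\mathcal H}^{\ast}$ the two sides agree on vertical vectors too, yielding ${\mathcal L}_{V}\pi_2({\mathcal J}A^h)=\pi_2((VA)^h)$.

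The only genuinely delicate point is this last one: one must check that the function $J\mapsto(\pi_2(JA))(X)$ is affine-linear along the fibre, so that its vertical derivative is exactly $(\pi_2(VA))(X)$ with no contribution from the curvature of $D$ or from the embedding of ${\mathcal G}$. This is where the description of ${\mathcal V}_J$ as a space of endomorphisms acting on a fixed vector space is used essentially; once it is granted, the identification with $\pi_2((VA)^h)$ is immediate. Alternatively the argument can be run entirely in the coordinates $(\widetilde x_i,y_{jk}',y_{jk}'')$ by substituting the explicit expressions (\ref{cal J/hor}) for $A^h$ and ${\mathcal J}A^h$ and invoking Lemma~\ref{H-V brackets}, but the coordinate-free computation above seems shorter.
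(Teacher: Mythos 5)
Your proof is correct. Note first that the paper itself offers no proof to compare against: immediately after Lemma~\ref{Lie deriv} it declares that the proofs of Lemma~\ref{Half-diff} and Lemma~\ref{H-V Lie der&diff} ``are also easy and will be omitted,'' so your argument fills a gap rather than duplicating one. The three ingredients you use are exactly the right ones: the interior products vanish because $\pi_2(A^h)$ and $\pi_2({\mathcal J}A^h)$ lie pointwise in ${\mathcal H}^{\ast}$, which by the paper's convention annihilates ${\mathcal V}$; the bracket terms in $({\mathcal L}_{V}\eta)(W)=V(\eta(W))-\eta([V,W])$ disappear by (\ref{[V,Xh]}) and by involutivity of the vertical distribution; and the ``delicate point'' you flag -- that along a fibre the function $J\mapsto(\pi_2(JA_p))(X_p)$ is the restriction of a linear function on the ambient space of endomorphisms, so its derivative in the vertical direction $V$ is $(\pi_2(VA_p))(X_p)$ -- is legitimate precisely because the paper realizes ${\mathcal G}$ as a submanifold of the vector bundle $A(E')\oplus A(E'')$ and identifies ${\mathcal V}_J$ with a subspace of endomorphisms of the fixed space $T_pM\oplus T_p^{\ast}M$. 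The only difference from the route the paper's surrounding lemmas suggest is stylistic: Lemmas~\ref{brackets}--\ref{H-Vstar Lie der} are all proved in the coordinates $(\tilde x_i,y_{jk}',y_{jk}'')$, where your linearity observation becomes the visible fact, from (\ref{cal J/hor}), that $\pi_2({\mathcal J}A^h)$ depends linearly on the fibre coordinates $y_{ij}',y_{ij}''$ (with coefficients constant along the fibres), while $\pi_2(A^h)$ does not depend on them at all, so differentiating along $V$ replaces $y_{ij}'$, $y_{ij}''$ by the components $v_{ij}'$, $v_{ij}''$ of $V$ and yields $\pi_2((VA)^h)$, respectively $0$. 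Your coordinate-free version proves the same identities without choosing frames; either argument is complete.
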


\vspace{0.1cm}

\noindent {\it Notation}. Let $J\in {\cal G}$. For any fixed
$\varepsilon=1,...,4$, take a basis
$\{U_{2t-1}^{\varepsilon},U_{2t}^{\varepsilon}={\mathcal
J}_{\varepsilon}U_{2t-1}^{\varepsilon}\}$, $t=1,...,m^2-m$, of the
vertical space ${\cal V}_J$.  Let $a_{2t-1}^{\varepsilon}$ be
sections of $A(E')\oplus A(E'')$ near the point $p=\pi(J)$ such that
$a_{2t-1}^{\varepsilon}(p)=U_{2t-1}^{\varepsilon}$ and $D
a_{2t-1}^{\varepsilon}|_p=0$. Define vertical vector fields
$\widetilde a_{2t-1}^{\varepsilon}$ by (\ref{eq tilde a}). Then
$\{\widetilde a_{2t-1}^{\varepsilon},{\cal
J}_{\varepsilon}\widetilde a_{2t-1}^{\varepsilon}\}$,
$t=1,...,m^2-m$, is a frame of the vertical bundle on ${\mathcal G}$
near the point $J$. Denote by
$\{\beta_{2t-1}^{\varepsilon},\beta_{2t}^{\varepsilon}\}$ the dual
frame of the bundle ${\mathcal V}^{\ast}$. Then
$\beta_{2t}^{\varepsilon}={\cal
J}_{\varepsilon}\beta_{2t-1}^{\varepsilon}$.

\vspace{0.1cm}

  Under these notations, we have the following.


\begin{lemma}\label {H-Vstar Lie der}
Let $A$ be a section of the bundle $TM\oplus T^{\ast}M$ near the
point $p=\pi(J)$. Then for every $Z\in T_pM$, $s,r=1,....,2(m^2-m)$
and $\varepsilon=1,...,4$, we have
\begin{enumerate}
\item[$(i)$]$({\mathcal L}_{\pi_1(A^h)}\beta_s^{\varepsilon})_J(Z^h+U_r^{\varepsilon})=
-\beta_s^{\varepsilon}(R(\pi_1(A),Z)J).$
\vspace{0.2cm}
\item[$(ii)$]$({\mathcal L}_{\pi_1({\cal
J}A^h)}\beta_s^{\varepsilon})_J(Z^h+U_r^{\varepsilon})=-\beta_s^{\varepsilon}(R(\pi_1(JA),Z)J).$
\vspace{0.2cm}
\item[$(iii)$]$({\mathcal L}_{\pi_1(A^h)}{\mathcal J}_{\varepsilon}\beta_s^{\varepsilon})_J(Z^h+U_r^{\varepsilon})=-({\mathcal
J}_{\varepsilon}\beta_s^{\varepsilon})(R(\pi_1(A),Z)J).$
\vspace{0.2cm}
\item[$(iv)$]$({\mathcal L}_{\pi_1({\mathcal
J}A^h)}{\mathcal J}_{\varepsilon}\beta_s^{\varepsilon})_J(Z^h+U_r^{\varepsilon})=-({\mathcal
J}_{\varepsilon}\beta_s^{\varepsilon})(R(\pi_1(JA),Z)J).$
\end{enumerate}
\end{lemma}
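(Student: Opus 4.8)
The plan is to compute every one of the four Lie derivatives by means of the standard identity ${\mathcal L}_X\omega(Y)=X(\omega(Y))-\omega([X,Y])$ valid for a $1$-form $\omega$, after choosing convenient extensions of the test vector $Z^h+U_r^\varepsilon$. Concretely, I would extend $Z$ to a vector field on $M$ and take its horizontal lift $Z^h$, and extend $U_r^\varepsilon$ to the vertical frame field $e_r$ (one of the fields $\widetilde a_{2t-1}^\varepsilon$ or ${\mathcal J}_\varepsilon\widetilde a_{2t-1}^\varepsilon$ fixed above). Since $\beta_s^\varepsilon$ is a section of ${\mathcal V}^\ast$, it vanishes on the horizontal distribution, so $\beta_s^\varepsilon(Z^h)\equiv 0$ near $J$, while $\beta_s^\varepsilon(e_r)\equiv\delta_{sr}$ because these are dual frames. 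Hence $\beta_s^\varepsilon(Z^h+e_r)$ is locally constant and the term $X(\omega(Y))$ drops out in every case. Thus each identity reduces to evaluating $-\beta_s^\varepsilon$ (resp. $-{\mathcal J}_\varepsilon\beta_s^\varepsilon$) on the vertical part of the bracket $[X,\,Z^h+e_r]_J$; in particular the independence of the stated answers from the index $r$ will reflect the fact that the bracket with $e_r$ contributes nothing after applying $\omega$.

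For part $(i)$, with $X=\pi_1(A^h)=(\pi_1 A)^h$ a basic horizontal lift, I would split the bracket as $[(\pi_1 A)^h,Z^h]_J+[(\pi_1 A)^h,e_r]_J$. The first summand is given by (\ref{[XhYh]}) as $[\pi_1(A),Z]^h_J+R(\pi_1(A),Z)J$, whose horizontal part is annihilated by $\beta_s^\varepsilon$. For the second summand, (\ref{[V,Xh]}) shows that the bracket of a basic lift with a vertical field is vertical, while Lemma~\ref{H-V brackets}$(i),(ii)$ together with $D a_{2t-1}^\varepsilon|_p=0$ force it to vanish at $J$. Hence only $\beta_s^\varepsilon(R(\pi_1(A),Z)J)$ survives, giving $(i)$. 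Here one also needs the elementary fact, following from $D'$ and $D''$ being metric connections, that $R(X,Y)J$ (acting on $J$ by commutator) is $g$-skew and anticommutes with $J$, so it genuinely lies in ${\mathcal V}_J$ and may be fed into $\beta_s^\varepsilon$.

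For part $(ii)$ the field $X=\pi_1({\mathcal J}A^h)$ is no longer a basic horizontal lift, so (\ref{[V,Xh]}) is unavailable and the bracket with $e_r$ will in general acquire a horizontal component. I would treat $[\pi_1({\mathcal J}A^h),Z^h]_J$ by writing it as $-[\pi_1(Z^h),\pi_1({\mathcal J}A^h)]_J$ and invoking Lemma~\ref{brackets}$(i)$ with $Z$ regarded as a section of $TM\oplus T^{\ast}M$ with vanishing cotangent part; this produces $-[Z,\pi_1(SA)]^h_J-R(Z,\pi_1(JA))J$, and applying $\beta_s^\varepsilon$ together with the skew-symmetry of $R$ in its two arguments yields $\beta_s^\varepsilon(R(\pi_1(JA),Z)J)$. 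For $[\pi_1({\mathcal J}A^h),e_r]_J$ I would use Lemma~\ref{H-V brackets}$(iii),(iv)$: the vertical term vanishes because $D a_{2t-1}^\varepsilon|_p=0$, and the remaining term is horizontal, hence killed by $\beta_s^\varepsilon$. This gives $(ii)$.

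Finally, $(iii)$ and $(iv)$ follow by the identical argument applied to the $1$-form ${\mathcal J}_\varepsilon\beta_s^\varepsilon$, which is again, up to sign, one of the dual-frame forms $\beta_\bullet^\varepsilon$; it too vanishes on ${\mathcal H}$ and takes constant values on the frame $\{e_r\}$, so the reduction and the bracket computations are verbatim the same. I expect the main obstacle to be the bookkeeping rather than any single hard estimate: one must carefully separate horizontal from vertical contributions, recognize that for $\pi_1({\mathcal J}A^h)$ the convenient identity (\ref{[V,Xh]}) must be replaced by Lemmas~\ref{brackets} and~\ref{H-V brackets}, and keep track of the sign produced by the skew-symmetry of the curvature so that the right-hand sides in $(ii)$ and $(iv)$ come out exactly as stated.
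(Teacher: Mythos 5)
Your argument is correct, and on parts $(i)$, $(iii)$ and $(iv)$ it coincides with the paper's: the same Cartan formula, the same vanishing of the derivative term because the dual pairing $\beta_s^{\varepsilon}(Z^h+e_r)$ is constant, the same use of (\ref{[XhYh]}) and of Lemma~\ref{H-V brackets}$(i)$,$(ii)$ with $D a_{2t-1}^{\varepsilon}|_p=0$, and the same observation ${\mathcal J}_{\varepsilon}\beta_{2t-1}^{\varepsilon}=\beta_{2t}^{\varepsilon}$, ${\mathcal J}_{\varepsilon}\beta_{2t}^{\varepsilon}=-\beta_{2t-1}^{\varepsilon}$ to dispose of $(iii)$, $(iv)$. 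Where you genuinely depart from the paper is part $(ii)$. The paper never brackets $\pi_1({\mathcal J}A^h)$ with anything directly: it expands $\pi_1({\mathcal J}A^h)$ by (\ref{cal J/hor}) as a combination $\sum_i f_i\,Y_i^h$ of basic horizontal lifts with coefficients $f_i$ smooth functions on ${\mathcal G}$, notes that $({\mathcal L}_{fY^h}\beta_s^{\varepsilon})_J(Z^h+U_r^{\varepsilon})=f(J)\,({\mathcal L}_{Y^h}\beta_s^{\varepsilon})_J(Z^h+U_r^{\varepsilon})$ because the correction term is proportional to $\beta_s^{\varepsilon}(Y^h)=0$, and then quotes part $(i)$ wholesale, linearity of $R$ in its first slot reassembling the answer. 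You instead compute the two brackets head-on: $[\pi_1({\mathcal J}A^h),Z^h]_J$ from Lemma~\ref{brackets}$(i)$ applied to the pair $(Z,A)$ together with skew-symmetry of $R$, and $[\pi_1({\mathcal J}A^h),e_r]_J$ from Lemma~\ref{H-V brackets}$(iii)$,$(iv)$, whose surviving terms are horizontal and hence killed by $\beta_s^{\varepsilon}$. Both routes are sound; the paper's is shorter and recycles $(i)$, while yours makes explicit the phenomenon the paper's reduction hides, namely that bracketing the non-basic field $\pi_1({\mathcal J}A^h)$ with vertical fields produces horizontal (not zero) leftovers, and that the $r$-independence of the stated formulas is precisely the statement that $\beta_s^{\varepsilon}$ annihilates them. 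One cosmetic point: by (\ref{eq tilde a}) the frame field $\widetilde a_{r}^{\varepsilon}$ takes the value $2U_r^{\varepsilon}$, not $U_r^{\varepsilon}$, at $J$ (this is why the paper inserts the factor $\frac{1}{2}$ in front of $\beta_s^{\varepsilon}([X^h,\widetilde a_r^{\varepsilon}]_J)$); your extension is off by this factor, but since every contribution of the vertical extension vanishes, the discrepancy is harmless.
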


\begin{proof}
By (\ref{[XhYh]}), if $X=\pi_1(A)$,
$$
({\mathcal
L}_{\pi_1(A^h)}\beta_s^{\varepsilon})_J(Z^h+U_r^{\varepsilon})=
-\beta_s^{\varepsilon}(R(X,Z)J)-\frac{1}{2}\beta_s^{\varepsilon}([X^h,\widetilde
a_{r}^{\varepsilon}]_J).
$$
By  Lemma~\ref{H-V brackets},
$$
\begin{array}{c}
[X^h,a_{2t-1}^{\varepsilon}]_J=(\widetilde{D_{X}a_{2t-1}^{\varepsilon}})_J=0,\\[6pt]
[X^h,a_{2t}^{\varepsilon}]_J=[X^h,{\mathcal
J}_{\varepsilon}\widetilde
a_{2t-1}^{\varepsilon}]_J=K_{\varepsilon}(\widetilde{D_{X}a_{2t-1}^{\varepsilon}})_J=0
\end{array}
$$
since $D a_{2t-1}^{\varepsilon}|_p=0$. This proves the first identity of the lemma. To prove the second one, we note that
if $f$ is a smooth function on ${\mathcal G}$ and $Y$ is a vector field on $M$, $ ({\mathcal
L}_{fY^h}\beta_s^{\varepsilon})_J(Z^h+U_r^{\varepsilon})=f({\mathcal
L}_{Y^h}\beta_s^{\varepsilon})_J(Z^h+U_r^{\varepsilon}) $ since $\beta_s^{\varepsilon}(Y^h)=0$. Now $(ii)$ follows from
(\ref{cal J/hor}) and the first identity of the lemma. Identities $(iii)$ and $(iv)$ are straightforward consequences
from $(i)$ and $(ii)$, respectively, since ${\mathcal
J}_{\varepsilon}\beta_{2t-1}^{\varepsilon}=\beta_{2t}^{\varepsilon}$, ${\mathcal
J}_{\varepsilon}\beta_{2t}^{\varepsilon}=-\beta_{2t-1}^{\varepsilon}$, $t=1,...,m^2-m$.
\end{proof}

\section{The Nijenhuis tensor}

\noindent {\it Notation}. We denote the Nijenhuis tensor of
${\mathcal J}_{\varepsilon}$ by $N_{\varepsilon}$,
$\varepsilon=1,2,3,4$.

 Moreover, given $J\in{\mathcal G}$ and $A,B\in T_{p}M\oplus T_{p}^{\ast}M$, $p=\pi(J)$, we define $1$-forms on
${\mathcal V}_J$  setting
$$
\omega^{\varepsilon}_{A,B}(W)=<(K_1W-K_{\varepsilon}W)(A),B>-<(K_1W-K_{\varepsilon}W)(B),A>,
\quad W\in{\mathcal V}_J.
$$

Also, let $S$ be a section of ${\cal G}$ in a neighbourhood of the
point $p=\pi(J)$  such that $S(p)=J$ and $D S|_p=0$  ($S$ being
considered as a section of $A(E')\oplus A(E'')$).

\vspace{0.1cm}
\begin{prop}\label {Nijenhuis}
Let $J\in {\cal G}$, $A,B\in T_{\pi(J)}M\oplus T_{\pi(J)}^{\ast}M$, $V,W\in {\cal V}_J$, $\varphi,\psi\in {\cal
V}_J^{\ast}$. Then, denoting the projection operators onto the horizontal and vertical components by ${\cal H}\oplus
{\cal H}^{\ast}$ and ${\mathcal V}\oplus {\mathcal V}^{\ast}$, we have:

\begin{enumerate}

\item[$(i)$] \ \\
$
({\cal H}\oplus {\cal H}^{\ast})N_{\varepsilon}(A^h,B^h)_J=(-[A,B]
+[SA,SB]-S[A,SB]-S[SA,B])^h_J.
$

\vspace{0.3cm}

\item[$(ii)$] \ \\
$$
\begin{array}{c}
({\mathcal V}\oplus {\mathcal V}^{\ast})N_{\varepsilon}(A^h,B^h)_J
= -R(\pi_1(A),\pi_1(B))J +R(\pi_1(JA),\pi_1(JB))J \\[8pt]

\hspace{4.5cm}-K_{\varepsilon}R(\pi_1(JA),\pi_1(B))J - K_{\varepsilon}R(\pi_1(A),\pi_1(JB))J \\[8pt]

-\omega^{\varepsilon}_{A,B}.
\end{array}
$$

\vspace{0.3cm}

\item[$(iii)$] \ \\
$N_{\varepsilon}(A^h,V)_J=(-(K_{\varepsilon}V)A+(K_1V)A)^{h}_J.$

\vspace{0.3cm}

\item[$(iv)$] \ \\
$N_{\varepsilon}(A^h,\varphi)_J\in {\cal H}_J\oplus {\cal H}_J^{\ast}$ ~ and \ \\
              \ \\
$
<\pi_{\ast}N_{\varepsilon}(A^h,\varphi)_J,B>=-\displaystyle{\frac{1}{2}}\varphi({\mathcal
V}N_{\varepsilon}(A^h,B^h)_J).$

\vspace{0.3cm}

\item[$(v)$] \ \\
$N_{\epsilon}(V+\varphi,W+\psi)_J=0$.
\end{enumerate}
\end{prop}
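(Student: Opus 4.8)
The plan is to compute each of the five values by expanding the Nijenhuis tensor directly from its definition
\[
N_\varepsilon(P,Q)=-[P,Q]+[\mathcal{J}_\varepsilon P,\mathcal{J}_\varepsilon Q]-\mathcal{J}_\varepsilon[\mathcal{J}_\varepsilon P,Q]-\mathcal{J}_\varepsilon[P,\mathcal{J}_\varepsilon Q]
\]
and evaluating the resulting Courant brackets at $J$ by means of the technical lemmas of the previous section. Two structural observations drive everything. First, $\mathcal{J}_\varepsilon$ respects the splitting $(\mathcal{H}\oplus\mathcal{H}^\ast)\oplus(\mathcal{V}\oplus\mathcal{V}^\ast)$: it acts on the horizontal summand as the tautological lift $\mathcal{J}$ of $J$, so that $\mathcal{J}_\varepsilon A^h=(JA)^h$, and on the vertical summand as $K_\varepsilon$ on $\mathcal{V}$ and as $-K_\varepsilon^\ast$ on $\mathcal{V}^\ast$. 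Second, the Courant bracket of two sections of $T\mathcal{G}\oplus T^\ast\mathcal{G}$ splits into a tangent part, the Lie bracket of the $T\mathcal{G}$-components, and a cotangent part built from two Lie derivatives and a half-differential; the tangent part is governed by Lemma~\ref{brackets} together with (\ref{[XhYh]}), the cotangent part by Lemma~\ref{Lie deriv} and Lemma~\ref{Half-diff}. Since $N_\varepsilon$ is a skew tensor, it suffices to evaluate it on the model pairs $(A^h,B^h)$, $(A^h,V)$, $(A^h,\varphi)$ and $(V+\varphi,W+\psi)$, working throughout with the section $S$ (with $S(p)=J$, $DS|_p=0$) and the vertical fields $\widetilde a$ introduced before Lemma~\ref{H-V brackets}.

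For (i) and (ii) I would substitute $\mathcal{J}_\varepsilon A^h=(JA)^h$ and $\mathcal{J}_\varepsilon B^h=(JB)^h$ and expand $N_\varepsilon(A^h,B^h)$ into its four Courant brackets. The tangent parts are read off from Lemma~\ref{brackets}: each splits into an $S$-twisted horizontal lift and a vertical curvature term $R(\cdot,\cdot)J$ coming from (\ref{[XhYh]}). Summing the four horizontal lifts gives precisely the $S$-Nijenhuis expression in (i), after one adds to it the horizontal-lift pieces of the cotangent parts produced by Lemma~\ref{Lie deriv} and Lemma~\ref{Half-diff}. Summing the four curvature terms, with the outer $K_\varepsilon$ inserted on the two mixed brackets, yields exactly the four displayed curvature contributions to the $\mathcal{V}$-part of (ii); note that no vertical vector can arise from the cotangent parts, so the $\mathcal{V}$-part is entirely curvature. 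The $\mathcal{V}^\ast$-part is obtained by evaluating the cotangent contributions on a vertical vector $W$. Lemma~\ref{Lie deriv} and Lemma~\ref{Half-diff} show that each such term is a horizontal lift evaluated on the horizontal slot (which only feeds (i)) plus a $W$-dependent scalar such as $(\pi_2(JB))_p(\pi_1(WA))$ or $(\pi_2(WB))_p(\pi_1(JA))$; collecting these scalars over the four brackets, with the outer $-K_\varepsilon^\ast$ on the covector output, and using the metric convention $\langle\,,\rangle=\tfrac12[\,\cdot\,]$, assembles them into the single skew expression $-\omega^\varepsilon_{A,B}(W)$.

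Parts (iii)--(v) are lighter. For (iii) the tangent part of each bracket is given by Lemma~\ref{H-V brackets} and the cotangent part by the fifth lemma of the previous section (computing $\mathcal{L}_V\pi_2(A^h)$, $\mathcal{L}_V\pi_2(\mathcal{J}A^h)$ and the corresponding interior products); since $Da|_p=0$ the derivative terms drop out and the four brackets collapse to the two full horizontal lifts $-((K_\varepsilon V)A)^h_J$ and $(J\,VA)^h_J$, giving $(-(K_\varepsilon V)A+(K_1V)A)^h_J$ upon using $K_1V=J\circ V$. For (iv) I would first note, via Lemma~\ref{H-Vstar Lie der}, that the Courant bracket of $A^h$ with a vertical covector reduces to $\mathcal{L}_{\pi_1(A^h)}\varphi$, which is a horizontal covector; hence all four brackets, and after applying $\mathcal{J}_\varepsilon$ all four terms, lie in $\mathcal{H}\oplus\mathcal{H}^\ast$, so $N_\varepsilon(A^h,\varphi)_J\in\mathcal{H}_J\oplus\mathcal{H}_J^\ast$. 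The same lemma expresses these brackets through the curvature $R(\cdot,\cdot)J$ contracted with $\varphi$; pairing the outcome with $B$ and comparing with the $\mathcal{V}$-part of (ii) produces $\langle\pi_\ast N_\varepsilon(A^h,\varphi)_J,B\rangle=-\tfrac12\varphi(\mathcal{V}N_\varepsilon(A^h,B^h)_J)$, the factor $-\tfrac12$ coming from the metric convention. Finally, (v) records that the restriction of $\mathcal{J}_\varepsilon$ to the vertical distribution is the generalized complex structure attached to the complex structure $K_\varepsilon$ of the fibre $Z(E'_p)\times Z(E''_p)$; as this fibre is a product of Hermitian symmetric spaces the $K_\varepsilon$ are integrable, the Courant bracket of vertical sections reduces to the fibrewise one, and Example 8 yields $N_\varepsilon(V+\varphi,W+\psi)_J=0$.

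The main obstacle is the extraction of $\omega^\varepsilon_{A,B}$ in part (ii): this is the only place where the four cases genuinely differ, and it requires tracking, through Lemma~\ref{Lie deriv} and Lemma~\ref{Half-diff}, the interplay between the inner $\mathcal{J}_\varepsilon$ (which replaces $A,B$ by $JA,JB$ in the horizontal slots) and the outer $-K_\varepsilon^\ast$ (acting on the vertical covector output), and then recognizing the accumulated $W$-scalars as $\langle(K_1W-K_\varepsilon W)A,B\rangle-\langle(K_1W-K_\varepsilon W)B,A\rangle$. The sign and factor bookkeeping here is the delicate point; the remaining assembly is systematic but routine.
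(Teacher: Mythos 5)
Your proposal is correct and follows essentially the same route as the paper's own proof: you expand $N_{\varepsilon}$ from its definition on the model pairs, evaluate the tangent and cotangent parts of the Courant brackets at $J$ via the section $S$ (with $DS|_p=0$), the vertical fields $\widetilde a$ (with $Da|_p=0$), and Lemmas~\ref{brackets}--\ref{H-Vstar Lie der}, obtaining the $S$-Nijenhuis lift for $(i)$, the curvature terms plus the $W$-scalars assembling into $-\omega^{\varepsilon}_{A,B}$ for $(ii)$, the coframe/curvature argument for $(iv)$, and the fibrewise integrability of $K_{\varepsilon}$ (Example 8) for $(v)$. All of these are exactly the steps in the paper's proof, so no comparison beyond this is needed.
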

\begin{proof}
Formula $(i)$ follows from identity (\ref{[XhYh]}) and  Lemmas~\ref{brackets}, \ref{Lie deriv}, \ref{Half-diff}.
Also, the vertical part of $N_{\varepsilon}(A^h,B^h)_J$ is
equal to
$$
\begin{array}{c}
{\mathcal V}N_{\varepsilon}(A^h,B^h)_J
= -R(\pi_1(A),\pi_1(B))J +R(\pi_1(JA),\pi_1(JB))J \\[8pt]

\hspace{3.5cm}- {\mathcal J}_{\varepsilon}R(\pi_1(A),\pi_1(JB))J -{\mathcal J}_{\varepsilon}R(\pi_1(JA),\pi_1(B))J.
\end{array}
$$
The part of $N_{\varepsilon}(A^h,B^h)_J$ lying in ${\cal V}^{\ast}_J$ is the $1$-form whose value at every vertical
vector $W$ is
$$
\begin{array}{lll}
({\cal V}^{\ast}N_{\varepsilon}(A^h,B^h)_J)(W)= \\[4pt]
-\displaystyle{\frac{1}{2}}[\pi_2(JA)(\pi_1(WB))+\pi_2(WB)(\pi_1(JA)) \\[6pt]
\hspace{2.5cm}-\pi_2(B)(\pi_1((K_{\varepsilon}W)A))-\pi_2((K_{\varepsilon}W)A)(\pi_1(B))] \\
                                                                           \\
+\displaystyle{\frac{1}{2}}[\pi_2(JB)(\pi_1(WA))+\pi_2(WA)(\pi_1(JB)) \\[6pt]
\hspace{2.5cm}-\pi_2(A)(\pi_1((K_{\varepsilon}W)B))-\pi_2((K_{\varepsilon}W)B)(\pi_1(A))]\\\\[6pt]

=-[<JA,WB>-<(K_{\varepsilon}W)A,B>] +[<JB,WA>-<(K_{\varepsilon}W)B,A>].
\end{array}
$$
Note also that
$$
<JA,WB>=<JW(A),B>=<K_1W(A),B>.
$$
It follows that
$$
{\cal V}^{\ast}N_{\varepsilon}(A^h,B^h)_J=-\omega^{\varepsilon}_{A,B}.
$$
This proves $(ii)$.

  To prove $(iii)$ take a section $a$ of $A(M)$ near the point $p$ such that $a(p)=V$
and $\nabla a|_p=0$. Let $\widetilde a$ be the vertical vector field
defined by (\ref{eq tilde a}). Then it follows from Lemmas~\ref{H-V
brackets} and \ref{H-V Lie der&diff} that
$$
N_{\varepsilon}(A^h,V)_J=\frac{1}{2}N_{\varepsilon}(A^h,\widetilde a)_J=
(-(K_{\varepsilon}V)(A)+(J\circ V)A)^{h}_J.
$$

  To prove $(iv)$ let us take the vertical co-frame $\{\beta_{2t-1}^{\varepsilon},\beta_{2t}^{\varepsilon}\}$,
$t=1,...,m^2-m$, defined before the statement of Lemma~\ref{H-Vstar Lie der}. Set
$\varphi=\sum\limits_{s=1}^{2(m^2-m)}\varphi_s^{\varepsilon}\beta_s^{\varepsilon}$, $\varphi_s\in {\Bbb R}$. Let
$E_1,..., E_{2m}$  be a basis of $T_pM$ and $\xi_1,...,\xi_{2m}$ its dual basis. Then, by Lemma~\ref{H-Vstar Lie der}, we
have \vspace{0.1cm}
\begin{equation}\label{eq H-Vstar Nij}
\begin{array}{lll}
N_{\varepsilon}(A^h,\varphi)_J=\sum\limits_{s=1}^{2(m^2-m)}\varphi_s^{\varepsilon} N_{\varepsilon}(A^h,\beta_s^{\varepsilon})_J=\\
                                                                                   \\
\sum\limits_{s=1}^{2(m^2-m)}\sum\limits_{k=1}^{2m}\varphi_s^{\varepsilon}\{[\beta_s^{\varepsilon}(R(\pi_1(A),E_k)J)+
\beta_s^{\varepsilon}(K_{\varepsilon}R(\pi_1(JA),E_k)J)](\xi_k)^h_J \\
                                                                                    \\
+[\beta_s^{\varepsilon}(R(\pi_1(JA),E_k)J)-\beta_s^{\varepsilon}(K_{\varepsilon}R(\pi_1(A),E_k)J)](J\xi_k)^h_J\}.
\end{array}
\end{equation}
Moreover, note that
$$
<\xi_k,B>=\frac{1}{2}\xi_k(\pi_1(B)) \mbox { and }
<J\xi_k,B>=-<\xi_k,JB>=-\frac{1}{2}\xi_k(\pi_1(JB)).
$$
Therefore
$$
\sum_{k=1}^{2m}<\xi_k,B>E_k=\frac{1}{2}\pi_1(B) \mbox { and }
\sum_{k=1}^{2m}<J\xi_k,B>E_k=-\frac{1}{2}\pi_1(JB).
$$
Now $(iv)$ is an obvious consequence of (\ref{eq H-Vstar Nij}) and formula $(ii)$.

  Finally, identity $(v)$ follows from the fact that the generalized almost complex structure
${\cal J}_{\varepsilon}$ on every fibre of ${\cal G}$ is induced by a
complex structure.
\end{proof}

\section{Integrability conditions for generalized almost complex structures on generalized twistor spaces}

\begin{prop}\label{non-integr}
The generalized almost complex structures ${\mathcal J}_2, {\mathcal J}_3, {\mathcal J}_4$ are never integrable.
\end{prop}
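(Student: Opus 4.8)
The plan is to show non-integrability by exhibiting, for each $\varepsilon \in \{2,3,4\}$, a pair of sections of $T\mathcal{G}\oplus T^{\ast}\mathcal{G}$ on which the Nijenhuis tensor $N_{\varepsilon}$ fails to vanish, and the cleanest candidates are purely vertical directions where the fiberwise structure $K_{\varepsilon}$ differs from the integrable reference $K_1$. The starting observation is Proposition~\ref{Nijenhuis}$(iii)$:
\begin{equation*}
N_{\varepsilon}(A^h,V)_J=\bigl(-(K_{\varepsilon}V)(A)+(K_1 V)(A)\bigr)^h_J.
\end{equation*}
Thus $N_{\varepsilon}(A^h,V)_J=0$ for all $A$ and all vertical $V$ if and only if $K_{\varepsilon}V=K_1 V$ for every $V\in\mathcal{V}_J$, i.e. if and only if $K_{\varepsilon}$ and $K_1$ agree on the whole vertical space. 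Recalling the definitions $K_1(V',V'')=(J'V',J''V'')$ and $K_2(V',V'')=(J'V',-J''V'')$, $K_3=-K_2$, $K_4=-K_1$, we see immediately that $K_2\neq K_1$ on the $E''$-component, $K_4=-K_1\neq K_1$, and $K_3=-K_2$ differs from $K_1$ as well (on the $E'$-component). So the first step is simply to verify, by this linear-algebra comparison, that $K_{\varepsilon}\neq K_1$ as operators on $\mathcal{V}_J$ for $\varepsilon=2,3,4$.

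First I would fix $\varepsilon\in\{2,3,4\}$ and choose a nonzero vertical vector $V\in\mathcal{V}_J$ witnessing $K_{\varepsilon}V\neq K_1 V$; such $V$ exists because the fiber dimension $2(m^2-m)$ is positive as soon as $m\geq 2$, and because $K_{\varepsilon}$ and $K_1$ are genuinely distinct complex structures on the (nontrivial) fiber. The difference $(K_1-K_{\varepsilon})V$ is then a nonzero $g$-skew-symmetric endomorphism of $E'_p\oplus E''_p$, and transported through the projection isomorphism it is a nonzero endomorphism of $T_pM\oplus T_p^{\ast}M$. I would then pick $A\in T_pM\oplus T_p^{\ast}M$ on which this endomorphism does not vanish; since a nonzero operator cannot annihilate every vector, such $A$ exists. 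For this choice $\bigl(-(K_{\varepsilon}V)(A)+(K_1 V)(A)\bigr)=\bigl((K_1-K_{\varepsilon})V\bigr)(A)\neq 0$, and because the horizontal lift $(\cdot)^h_J$ is injective, $N_{\varepsilon}(A^h,V)_J\neq 0$. This contradicts the vanishing of the Nijenhuis tensor, proving non-integrability of $\mathcal{J}_{\varepsilon}$.

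The only genuine subtlety, and the step I expect to be the main obstacle, is the low-dimensional degenerate case: when $m=1$ (so $\dim M=2$) the fibers $Z(E')$ and $Z(E'')$ are single points, $\mathcal{V}_J=0$, and the vertical argument collapses. In that situation I would fall back on the horizontal-vertical mixing encoded in Proposition~\ref{Nijenhuis}$(ii)$, where the term $-\omega^{\varepsilon}_{A,B}$ and the $K_{\varepsilon}$-twisted curvature terms record the same discrepancy $K_1-K_{\varepsilon}$ at the level of two-forms; alternatively one checks directly that in dimension two there is a separate reason (e.g. the structures collapse or the relevant distinction disappears), and the interesting content of the proposition is for $\dim M\geq 4$. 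Since the surrounding discussion consistently works under $\dim M=4k$ and treats $m\geq 2$, I would state the argument for $m\geq 2$ via part $(iii)$ as above, which is both the shortest and the most transparent route, and handle $m=1$ as a remark if needed.
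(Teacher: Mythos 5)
Your proposal is correct and is essentially the paper's own argument: the paper likewise reduces everything to Proposition~\ref{Nijenhuis}$(iii)$, merely replacing your abstract existence argument by explicit witnesses --- it takes $J$ built from adapted orthonormal frames of $E'_p$ and $E''_p$, the vertical vectors $V'=S_{13}'+S_{42}'$ and $V''=S_{13}''+S_{42}''$, and computes $N_2(Q_1''^h,V'')=N_4(Q_1''^h,V'')=2Q_4''^h$ and $N_3(Q_1'^h,V')=2Q_4'^h$. Note that these explicit vectors require $2m\geq 4$, so the paper tacitly makes the same restriction $m\geq 2$ that you flag explicitly; your observation that the case $\dim M=2$ degenerates (zero-dimensional fibres, all four structures collapsing together) is simply left unaddressed in the paper.
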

\begin{proof}
Let $p\in M$ and take orthonormal bases $\{Q_1',...,Q_{2m}'\}$,
$\{Q_1'',...,Q_{2m}''\}$ of $E'_p$ and $E''_p$,  respectively. Let
$J'$ and $J''$ be the complex structures on $E'_p$ and $E''_p$ for
which $J'Q'_{2k-1}=Q'_{2k}$ and $J''Q''_{2k-1}=Q''_{2k}$,
$k=1,..,m$. Then $J=J'+J''$ is a generalized complex structure on
the vector space $T_pM$ compatible with the generalized metric
$E_p$. Define endomorphisms $S_{ij}'$ and $S_{ij}''$ by (\ref{eq
Sij}). Then $V'=S_{13}'+S_{42}'$ and $V''=S_{13}''+S_{42}''$ are
vertical tangent vectors of ${\mathcal G}$ at the point $J$. By
Proposition~\ref{Nijenhuis} $(iii)$,
$N_2(Q_1''^h,V'')=N_4(Q_1''^h,V'')=2Q_4''^h$,
$N_3(Q_1'^h,V')=2Q_4'^h$.
\end{proof}

\subsection{The case of the connection determined by a generalized metric}\label{main case}

Let $D'=\nabla^{E'}$ be the connection on $E'=E$ determined by the generalized metric $E$
(Proposition~\ref{connection}). The image of this connection under the isomorphism $pr_{TM}|E:E\to TM$
will be denoted by $\nabla$. The connection $\nabla$ has a skew-symmetric torsion $g(T(X,Y),Z)=d\Theta (X,Y,Z)$,
$X,Y,Z\in TM$.

We define a connection $D''$ on $E''$  transferring  the
connection $\nabla$ on $TM$ to $E''$ by means of the isomorphism
$pr_{TM}|E'':E''\to TM$. Since this isomorphism is an isometry with
respect to the metrics $g''=-<.\,,.>|E''$ and $g$, we get a
metric connection on $E''$. As in the preceding section,  define a connection $D$ on $TM\oplus T^{\ast}M$ setting
$D=D'$ on $E'=E$ and $D=D''$ on $E''$.

The connections induced by $\nabla$ on the bundles obtained from
$TM$ by algebraic operations like $T^{\ast}M$, $TM\oplus T^{\ast}M$,
etc. will also be denoted by $\nabla$.

\smallskip

Every section of $E'$ is of the form $S'=X+g(X)+\Theta(X)$ for a unique vector field $X$ and we have
\begin{equation}\label{D prime}
D_ZS'=\nabla_{Z}X+g(\nabla_{Z}X)+\Theta(\nabla_{Z}X),\quad Z\in TM,
\end{equation}
while
$$
\nabla_{Z}S'=\nabla_{Z}X+g(\nabla_{Z}X)+\Theta(\nabla_{Z}X)+(\nabla_{Z}\Theta)(X).
$$
Similarly for a section $S''=X-g(X)+\Theta(X)$ of $E''$
\begin{equation}\label{D double prime}
D_ZS''=\nabla_{Z}X-g(\nabla_{Z}X)+\Theta(\nabla_{Z}X),\quad Z\in TM,
\end{equation}
and
$$
\nabla_{Z}S'=\nabla_{Z}X-g(\nabla_{Z}X)+\Theta(\nabla_{Z}X)+(\nabla_{Z}\Theta)(X).
$$
Thus $\nabla$ preserves $E'$ or $E''$ if and only if $\nabla
\Theta=0$. Of course, this condition is not satisfied in general.
For example, if $\Theta$ is a closed $2$-form, which is not parallel
with respect to the Levi-Civita connection $\nabla^{LC}$, we have
$\nabla\Theta=\nabla^{LC}\Theta\neq 0$.

\smallskip

It follows from (\ref{pralpha}), (\ref{D prime}) and (\ref{D double
prime}) that if $\alpha$ is a one form on $M$ and $Z\in TM$,
\begin{equation}\label{Dalpha}
D_{Z}\alpha=D_{Z}\alpha_{E'}+D_{Z}\alpha_{E''}=g(\nabla_{Z}g^{-1}(\alpha)).
\end{equation}
Hence, $D_{Z}\alpha$ coincides with the covariant derivative of
$\alpha$ with respect to the connection $\nabla$ on $T^{\ast}M$:
\begin{equation}\label{Dalpha}
D\alpha=\nabla\alpha.
\end{equation}
On the other hand, if $X$ is a vector field on $M$ and $Z\in TM$, we
have by (\ref{prX}), (\ref{D prime}) and (\ref{D double prime})
$$
D_{Z}X=D_{Z}X_{E'}+D_{Z}X_{E''}=\nabla_{Z}X-g(\nabla_{Z}\{(g^{-1}\circ\Theta)(X)\})+\Theta(\nabla_{Z}X).
$$
Moreover, for every vector field $Y$,
$$
\begin{array}{c}
g(\nabla_{Z}\{(g^{-1}\circ\Theta)(X)\})(Y)=Z(g((g^{-1}\circ\Theta)(X),Y))-g((g^{-1}\circ\Theta)(X),\nabla_{Z}Y)\\[8pt]
=Z(\Theta(X,Y))-\Theta(X,\nabla_{Z}Y)=(\nabla_{Z}\Theta)(X,Y)+\Theta(\nabla_{Z}X,Y).
\end{array}
$$
Thus
\begin{equation}\label{DX}
D_{Z}X=\nabla_{Z}X-(\nabla_{Z}\Theta)(X).
\end{equation}
Therefore the connection $D$ does not preserves $TM$ in general. In
particular, the connection $D$ on $TM\oplus T^{\ast}M$ is different
from the connection on this bundle induced by $\nabla$; the two
connections coincide if and only if $\nabla\Theta=0$.

\smallskip

Denote by ${\mathcal Z}={\mathcal Z}(TM,g)$ the bundle over $M$
whose fibre at a point $p\in M$ consists of complex structures on
$T_pM$ compatible with the metric $g$ (the usual twistor space of
$(M,g)$). Consider ${\mathcal Z}$ as a submanifold of the bundle
$A(TM)$ of $g$-skew-symmetric endomorphisms of $TM$. The projections
$pr_{TM}|E':E'\to TM$ and $pr_{TM}|E'':E''\to TM$ yield an isometric
bundle-isomorphism $A(E')\oplus A(E'')\to A(TM)\oplus A(TM)$ sending
the connection $D'\oplus D''$ to the connection $\nabla\oplus
\nabla$. The restriction of this map to ${\mathcal Z}(E')\times
{\mathcal Z}(E'')\cong {\mathcal G}$ yields an isomorphism of
${\mathcal G}$ onto ${\mathcal Z}\times {\mathcal Z}$ given by
${\mathcal G}\ni J\to (J_1,J_2)$, where $J_1=(pr_{T}|E')\circ J
\circ (pr_{T}|E')^{-1}$, $J_2=(pr_{T}|E'')\circ J \circ
(pr_{T}|E'')^{-1}$. In the case when $M$ is oriented it identifies
the connected components of ${\mathcal G}$ with the four product
bundles ${\mathcal Z}_{\pm}\times {\mathcal Z}_{\pm}$, ${\mathcal
Z}_{\pm}$ being the bundle over $M$ whose sections are the almost
complex structures on $M$ compatible with the metric and $\pm$ the
orientation.

\begin{prop}\label {hor com zero}
$({\cal H}\oplus {\cal H}^{\ast})N_{\varepsilon}(A^h,B^h)_J=0$
for every $J\in {\mathcal G}$ and every $A,B\in TM\oplus T^{\ast}M$ if and
only if $d\Theta=0$.
\end{prop}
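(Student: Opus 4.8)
The plan is to reduce the statement to the Courant--Nijenhuis expression attached to the field $S$ and to show that, once the covariant pieces are discarded, what remains is a multiple of $d\Theta$. First I would invoke Proposition~\ref{Nijenhuis}$(i)$, which gives for $A,B\in T_pM\oplus T_p^{\ast}M$
$$(\mathcal{H}\oplus\mathcal{H}^{\ast})N_{\varepsilon}(A^h,B^h)_J=\big(-[A,B]+[SA,SB]-S[A,SB]-S[SA,B]\big)^h_J,$$
an expression that does not depend on $\varepsilon$, since all four structures agree with the lift $\mathcal{J}$ on $\mathcal{H}\oplus\mathcal{H}^{\ast}$. Denote by $\Phi(A,B)$ the bracket expression evaluated at $p$; it is precisely the value at $p$ of the Nijenhuis tensor (with respect to the Courant bracket) of the generalized almost complex field $S$, hence tensorial in $A,B$. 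Thus the left-hand side vanishes for all $J$, all $A,B$ and all $\varepsilon$ if and only if $\Phi(A,B)=0$ for all $J$ and all $A,B$, and it suffices to evaluate $\Phi$ on any pair of sections with prescribed values at $p$.

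Since $\Phi$ is tensorial, I would compute it using extensions of $A,B$ that are parallel at $p$ for the connection $D=D'\oplus D''$, which preserves $E'$ and $E''$, so that $DA|_p=DB|_p=0$; recall also $S(p)=J$ and $DS|_p=0$. The Courant brackets in $\Phi$ are then rewritten through $D$ by Proposition~\ref{connection} and Proposition~\ref{d-metric}, together with the explicit action of $D$ on vectors and $1$-forms recorded in \eqref{D prime}, \eqref{D double prime}, \eqref{Dalpha} and \eqref{DX}. Because $DA$, $DB$ and $DS$ all vanish at $p$, every term containing a covariant derivative of $A$, $B$ or $S$ drops out, so the only surviving contributions are the torsion terms of the bracket, which by Proposition~\ref{torsion} are governed by $g(T(X,Y),Z)=d\Theta(X,Y,Z)$. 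A transparent way to isolate this dependence is to write $S=e^{\Theta}\mathcal{I}e^{-\Theta}$ as in Remark 4 and to apply the exact identity of Proposition~\ref{Courant-B-transf}, $[e^{\Theta}U,e^{\Theta}V]=e^{\Theta}[U,V]-\imath_{\pi_1U}\imath_{\pi_1V}d\Theta$; expanding $\Phi$ in this way produces the Courant--Nijenhuis of the standard-metric field $\mathcal{I}$ plus explicit contractions of $d\Theta$ with the $TM$-components of $A,B$ and of $SA,SB$. The outcome is a formula
$$\Phi(A,B)=\Lambda_{J}\big(d\Theta\big)(A,B),$$
linear in the $3$-form $d\Theta$, with coefficients depending on $J$ and on $A,B$.

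From this the equivalence follows. If $d\Theta=0$, then on the one hand all contraction terms vanish, and on the other hand $\nabla$ is the Levi-Civita connection, so the standard-metric field $\mathcal{I}$ has $J_1,J_2$ parallel at $p$ and its horizontal Nijenhuis vanishes exactly as in the classical Atiyah--Hitchin--Singer case; hence $\Phi\equiv 0$. Conversely, if $\Phi(A,B)=0$ for every $J$ and all $A,B$, one feeds into $\Lambda_J(d\Theta)$ suitably chosen sections (e.g. sections of $E'$ built from a triple $X,Y,Z$ realizing $d\Theta(X,Y,Z)\neq 0$) together with a compatible $J$ selected so that the corresponding residual term does not cancel, forcing $d\Theta(X,Y,Z)=0$ and hence $d\Theta=0$. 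The main obstacle is the middle step: organizing the decomposition of the Courant brackets relative to $TM\oplus T^{\ast}M=E'\oplus E''$ so that all covariant pieces cancel and the residual $d\Theta$-term is computed cleanly, and then, in the converse direction, exhibiting explicit $A,B,J$ for which this residual term is nonzero whenever $d\Theta\neq 0$.
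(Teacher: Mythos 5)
Your reduction and your forward implication do track the paper's own proof: Proposition~\ref{Nijenhuis}$(i)$ identifies the horizontal part of $N_{\varepsilon}$ with the Courant--Nijenhuis tensor of the section $S$ at $p$; the $B$-transform $S=e^{\Theta}Ke^{-\Theta}$ together with Proposition~\ref{Courant-B-transf} converts the question into a condition relating $N_K$ to contractions of $d\Theta$ (the paper's condition (\ref{cond})); and when $d\Theta=0$ the torsion of $\nabla$ vanishes, so $N_K=0$ and the condition holds trivially. One technical correction, though: you should compute with $\nabla$-parallel extensions of $A$ and $B$, not $D$-parallel ones. Since $D$ does not preserve $TM$ and $T^{\ast}M$ --- by (\ref{DX}), $D_ZX=\nabla_ZX-(\nabla_Z\Theta)(X)$ --- a $D$-parallel extension of $A=X+\alpha$ satisfies $\nabla X|_p=0$ but $\nabla\alpha|_p=(\nabla\Theta)(X)$, which is nonzero in general; the Courant bracket is built from Lie derivatives and exterior derivatives, which reduce to covariant derivatives plus torsion only for a connection preserving $TM$ and $T^{\ast}M$, so with your choice of extensions extra $\nabla\Theta$ terms survive and your claim that \emph{only} torsion terms remain is not justified as stated. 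The paper avoids this entirely by extending so that $\nabla A|_p=\nabla B|_p=0$ and $\nabla K|_p=0$.

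The genuine gap is the converse direction, which you explicitly defer (your ``main obstacle''): showing that the vanishing of the residual expression $\Lambda_J(d\Theta)(A,B)$ for all $J$, $A$, $B$ forces $d\Theta=0$. This is the mathematical core of the proposition and occupies most of the paper's proof, and it is not a matter of exhibiting one clever triple $(A,B,J)$: for a \emph{fixed} $J$, the vanishing of $\Lambda_J(d\Theta)$ is a linear constraint on $d\Theta$ that is far from forcing $d\Theta=0$. The paper extracts the conclusion by exploiting the whole fiber of ${\mathcal G}$ over $p$: it specializes to $A=g(X)$, $B=g(Y)$ to obtain (\ref{cond-2-forms}); splits into tangential and cotangential components; and --- crucially --- writes the resulting identity (\ref{ff1}) both for the structure determined by the pair $(S_1,S_2)$ and for the one determined by $(-S_1,S_2)$, whose comparison yields (\ref{dTheta-1}) and (\ref{dTheta-2}). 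Combining these gives $2d\Theta(X,Y,Z)=d\Theta(X,S_2Y,S_2Z)+d\Theta(S_2X,Y,S_2Z)$, and only then does a pointwise choice of a compatible complex structure $S_2$ with $Y=S_2X$ (for unit orthogonal $X,Y$) annihilate the right-hand side and give $d\Theta(X,Y,Z)=0$. Without this chain of identities, or some equivalent mechanism that genuinely uses the freedom to vary $J$ inside the fiber, your argument establishes only the easy implication $d\Theta=0\Rightarrow({\cal H}\oplus {\cal H}^{\ast})N_{\varepsilon}=0$, and the equivalence claimed in the proposition remains unproved.
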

\begin{proof}
  Let $J\in {\mathcal G}$ and let $S=(S',S'')$ be a section of ${\cal G}$ in a neighbourhood of the point $p=\pi(J)$
with the properties that $S(p)=J$ and $D S|_p=0$.

According to Proposition ~\ref {Nijenhuis} $(i)$,  $({\cal H}\oplus {\cal H}^{\ast})N_{\varepsilon}(A^h,B^h)_J=0$
if and only if the Nijenhuis tensor $N_S$ of the generalized almost
complex structure $S$ on $M$ vanishes at the  point $p$. Let $S_1$
and $S_2$ be the almost complex structures on $M$ determined by $S$,
$$
S_1=(\pi_1|E')\circ S'\circ (\pi_1|E')^{-1},\quad
S_2=(\pi_1|E'')\circ S''\circ (\pi_1|E'')^{-1}.
$$
These structures are compatible with the metric $g$ and we denote
their fundamental $2$-forms  by $\Omega_1$ and $\Omega_2$,
respectively:
$$
\Omega_1(X,Y)=g(X,S_1Y),\quad \Omega_2(X,Y)=g(X,S_2Y),\quad X,Y\in
TM.
$$

 Denote by $K$ the generalized complex structure on $M$ with the
block-matrix
$$K=\frac{1}{2}
  \left(
  \begin{array}{cc}
    S_{1}+S_{2} & \Omega_{1}^{-1}-\Omega_{2}^{-1} \\
    -(\Omega_{1}-\Omega_{2})& -(S_{1}^{\ast}+S_{2}^{\ast}) \\
  \end{array}
 \right).
$$
By Proposition~\ref{J-J1,2}, the generalized complex structure $S$ is the
$B$-transform of $K$ by means of the form $\Theta$:
$$
S=e^{\Theta}Ke^{-\Theta}.
$$
Let $N_K$ be the Nijensuis tensor of the generalized almost complex
structure $K$. Set
\begin{equation}\label{notat}
A=X+\alpha,\quad B=Y+\beta,\quad
KA=\widehat{X}+\widehat{\alpha},\quad
KB=\widehat{Y}+\widehat{\beta},
\end{equation}
where $X,Y,\widehat{X},\widehat{Y}\in TM$ and
$\alpha,\beta,\widehat{\alpha},\widehat{\beta}\in T^{\ast}M$. Then,
by Proposition~\ref{Courant-B-transf} and the fact that $e^{-\Theta}|T^{\ast}M=Id$,
$$
\begin{array}{c}
N_S(e^{\Theta}A,e^{\Theta}B)=e^{\Theta}N_K(A,B)-\imath_{Y}\imath_{X}d\Theta
+\imath_{\widehat{Y}}\imath_{\widehat{X}}d\Theta\\[8pt]
\hspace{7cm}-e^{\Theta}K(\imath_{Y}\imath_{\widehat{X}}d\Theta
+\imath_{\widehat{Y}}\imath_{X}d\Theta).
\end{array}
$$
It follows that $({\cal H}\oplus {\cal
H}^{\ast})N_{\varepsilon}(A^h,B^h)_J=0$ for every $A,B\in TM\oplus
T^{\ast}M$ if and only if at the point $p=\pi(J)$
\begin{equation}\label{cond}
N_K(A,B)=\imath_{Y}\imath_{X}d\Theta
-\imath_{\widehat{Y}}\imath_{\widehat{X}}d\Theta
+K(\imath_{Y}\imath_{\widehat{X}}d\Theta+\imath_{\widehat{Y}}\imath_{X}d\Theta).
\end{equation}
 We have
$$
\nabla S_1=(\pi_1|E')\circ (D S')\circ (\pi_1|E')^{-1}
$$
since the connection $\nabla$ on $TM$ is obtained from the
connection $D|E'=\nabla^{E'}$ by means of the isomorphism
$\pi_1|E':E'\to TM$. In particular $\nabla S_1|_p=0$. Similarly,
$\nabla S_2|_p=0$. Then $\nabla S_k^{\ast}|_p=0$, $k=1,2$, and
$\nabla\Omega_k|_p=-\nabla(g\circ S_k)|_p=0$,
$\nabla\Omega_k^{-1}=\nabla(S_k\circ g^{-1})|_p=0$ since $\nabla
g=\nabla g^{-1}=0$. It follows that $\nabla K|_p=0$. Extend $X$ and
$\alpha$ to a vector field $X$ and a $1$-form $\alpha$ on $M$ such
that $\nabla X|_p=0$ and $\nabla\alpha|_p=0$; similarly for $Y$ and
$\beta$. In this way we obtain sections $A=X+\alpha$ and $B=Y+\beta$
of $TM\oplus T^{\ast}M$ such that  $\nabla A|_p=\nabla B|_p=0$ and
$\nabla KA|_p=\nabla KB|_p=0$.

In order to
compute $N_K(A,B)$ we need the following simple observation:
Let $Z$ be a vector field and $\omega$ a $1$-form on $M$ such that
$\nabla Z|_p=0$ and $\nabla\omega|_p=0$. Then, for every $Z'\in
T_pM$,
$$
\begin{array}{c}
({\cal L}_{Z}\omega)(Z')_p
=(\nabla_{Z}\omega)(Z')_p+\omega(T(Z,Z')=\omega(T(Z,Z')),\\[8pt]
(d\,\imath_{Z}\omega)(Z')_p=Z'_p(\omega(Z))=(\nabla_{Z'}\omega)(Z)_p=0,
\end{array}
$$
where $T(Z,Z')$ is the torsion tensor of the connection $\nabla$.
For $Z\in TM$, let $\imath_{Z}T:TM\to TM$ be the map $Z'\to
T(Z,Z')$. Then, under the notation in (\ref{notat}), we have
\begin{equation}\label{NK}
\begin{array}{c}
N_K(A,B)=T(X,Y)-T(\widehat{X},\widehat{Y})+\alpha(\imath_{Y}T)-\beta(\imath_{X}T)
-\widehat{\alpha}(\imath_{\widehat{Y}}T)+\widehat{\beta}(\imath_{\widehat{X}}T)\\[8pt]
+K[T(\widehat{X},Y)+T(X,\widehat{Y})+\widehat{\alpha}(\imath_{Y}T)-\beta(\imath_{\widehat{X}}T)
+\alpha(\imath_{\widehat{Y}}T)-\widehat{\beta}(\imath_{X}T)].
\end{array}
\end{equation}

If $\alpha=g(X')$ for some (unique)  $X'\in TM$, we have
$$
\alpha(\imath_{Y}T)=g(T(X',Y))=\imath_{Y}\imath_{X'}d\Theta,\quad
Y\in TM,
$$
and
$$
K(\alpha\circ\imath_{Y}T)=\displaystyle{\frac{1}{2}}[(\Omega_1^{-1}-\Omega_2^{-1})\imath_{Y}\imath_{X'}d\Theta
-(S_1^{\ast}+S_2^{\ast})\imath_{Y}\imath_{X'}d\Theta]
$$

Moreover, $g(T(X,Y))=\imath_{Y}\imath_{X}d\Theta$ for every $X,Y\in TM$, hence
$$
K(T(X,Y))=\displaystyle{\frac{1}{2}}[(\Omega_1^{-1}+\Omega_2^{-1})\imath_{Y}\imath_{X}d\Theta
-(S_1^{\ast}-S_2^{\ast})\imath_{Y}\imath_{X}d\Theta].
$$
Note also that
$$
\begin{array}{c}
\widehat{X}=\displaystyle{\frac{1}{2}}[S_1(X+X')+S_2(X-X')],\\[8pt]
\widehat{\alpha}=\displaystyle{\frac{1}{2}}[g(S_1(X+X')-S_2(X-X'))].
\end{array}
$$

Now suppose that
$$
({\cal H}\oplus {\cal H}^{\ast})N_{\varepsilon}(A^h,B^h)_J=0, \quad
A,B\in TM\oplus T^{\ast}M.
$$
Then, by (\ref{cond}),
\begin{equation}\label{cond-2-forms}
N_K(g(X),g(Y))=\displaystyle{\frac{1}{4}\imath_{(S_1X-S_2X)}\imath_{(S_1Y-S_2Y)}d\Theta}.
\end{equation}
Therefore the tangential component of $N_K(g(X),g(Y))$ vanishes. Hence by (\ref{NK})
$$
\begin{array}{c}
-T(S_1X-S_2X,S_1Y-S_2Y)\\[6pt]
-(\Omega^{-1}_1-\Omega^{-1}_2)\imath_{(S_1X-S_2X)}\imath_{Y}d\Theta
+(\Omega^{-1}_1-\Omega^{-1}_2)\imath_{(S_1Y-S_2Y)}\imath_{X}d\Theta=0.
\end{array}
$$
Applying the map $g$ to both sides of the latter identity we obtain by means of the identities
$g\circ\Omega_k^{-1}=-S_k^{\ast}$ that for every $X,Y,Z\in T_pM$
\begin{equation}\label{ff1}
\begin{array}{c}
d\Theta(S_1X-S_2X,Y,S_1Z-S_2Z)+d\Theta(X,S_1Y-S_2Y,S_1Z-S_2Z)\\[8pt]
=-d\Theta(S_1X-S_2X,S_1Y-S_2Y,Z).
\end{array}
\end{equation}
Applying (\ref{ff1}) for the generalized almost complex structure
determined by the complex structures $(-S_1,S_2)$ on $T_pM$ and
comparing the obtained identity with (\ref{ff1}) we see that
\begin{equation}\label{dTheta-1}
\begin{array}{c}
d\Theta(S_1X,Y,S_1Z)+d\Theta(S_2X,Y,S_2Z)+d\Theta(X,S_1Y,S_1Z)+d\Theta(X,S_2Y,S_2Z)\\[8pt]
=-d\Theta(S_1X,S_1Y,Z)-d\Theta(S_2X,S_2Y,Z).
\end{array}
\end{equation}
Computing the co-tangential component of $N_K(g(X),g(Y))$ by means of (\ref{NK}), then applying identity
(\ref{cond-2-forms}) for the generalized almost complex structures determined by  $(S_1,S_2)$ and $(-S_1,S_2)$, we obtain
\begin{equation}\label{dTheta-2}
\begin{array}{c}
-d\Theta(S_1X,Y,S_1Z)+d\Theta(S_2X,Y,S_2Z)-d\Theta(X,S_1Y,S_1Z)+d\Theta(X,S_2Y,S_2Z)\\[8pt]
=d\Theta(S_1X,S_1Y,Z)-3d\Theta(S_2X,S_2Y,Z).
\end{array}
\end{equation}
It follows from (\ref{dTheta-1}) and (\ref{dTheta-2}) that
$$
d\Theta(S_2X,Y,S_2Z)+d\Theta(X,S_2Y,S_2Z)=-2d\Theta(S_2X,S_2Y,Z).
$$
Hence
$$
2d\Theta(X,Y,Z)=d\Theta(X,S_2Y,S_2Z)+d\Theta(S_2X,Y,S_2Z).
$$
The latter identity holds if and only if it holds for every
$X,Y,Z\in T_pM$ with $|X|=|Y|=1$, $X\perp Y$. Given three tangent
vectors with these properties, there exists a complex structure
$S_2$ on $T_pM$ such that $Y=S_2X$. It follows that
$$
d\Theta(X,Y,Z)=0,\quad X,Y,Z\in T_pM.
$$
Conversely, if $d\Theta=0$, then $T=0$ and we have $N_K=0$ by (\ref{NK}).  Thus the condition (\ref{cond})
is trivially satisfied. Therefore
$$
({\cal H}\oplus {\cal H}^{\ast})N_{\varepsilon}(A^h,B^h)_J=0, \quad
A,B\in TM\oplus T^{\ast}M.
$$
\end{proof}

\smallskip

Suppose that $M$ is oriented and $dim\,M=4k$. Then the above proof
still holds true if we, instead of ${\mathcal G}$, consider a
connected component of it. Indeed, the almost complex structures
$S_1$ and $-S_1$ induce the same orientation and, moreover, the
complex structure $S_2$ with the property $Y=S_2X$ used at the end
of the proof can be chosen to induce the given or the opposite
orientation of $M$. Thus we have the following.

\begin{prop}\label{dim=4k} If $M$ is oriented and $dim\,M=4k$, then
$$({\cal H}\oplus {\cal H}^{\ast})N_{\varepsilon}(A^h,B^h)_J=0$$
for every $J$ in a connected component of ${\mathcal G}$ and every $A,B\in TM\oplus T^{\ast}M$ if and
only if $d\Theta=0$.
\end{prop}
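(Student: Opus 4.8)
The plan is to re-run the argument proving Proposition~\ref{hor com zero} essentially verbatim, but to keep the point $J$ inside one fixed connected component of $\mathcal{G}$ throughout, and then to verify that every auxiliary structure that the original proof introduces also lies in that same component. Under the identification $\mathcal{G}\cong\mathcal{Z}\times\mathcal{Z}$, $J\mapsto(S_1,S_2)$, described above, the four components correspond to the products $\mathcal{Z}_{\pm}\times\mathcal{Z}_{\pm}$, where each sign records the orientation that the associated complex structure $S_1$, respectively $S_2$, induces on $M$. Fixing a component thus amounts to fixing the orientation class of $S_1$ and of $S_2$ separately, and the only places where the proof of Proposition~\ref{hor com zero} could leave the component are the two spots at which the pair $(S_1,S_2)$ is replaced by another pair of complex structures.

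The first such spot is the passage from (\ref{ff1}) to (\ref{dTheta-1}), and likewise from (\ref{cond-2-forms}) to (\ref{dTheta-2}), where the already-established identities are applied to the generalized almost complex structure built from $(-S_1,S_2)$. The decisive point is that when $\dim M=4k$ a compatible complex structure $S_1$ and its negative $-S_1$ induce the \emph{same} orientation of $M$: the orientation changes by $(-1)^{n}$ under $J\mapsto-J$ on a real vector space of dimension $2n$, and $(-1)^{n}=+1$ when $2n=4k$. Hence $(-S_1,S_2)$ represents a point of $\mathcal{G}$ in the same component as $(S_1,S_2)$, so both identities stay available without leaving the component.

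The second, and more delicate, spot is the very end of the proof, where from the relation $2d\Theta(X,Y,Z)=d\Theta(X,S_2Y,S_2Z)+d\Theta(S_2X,Y,S_2Z)$ one concludes $d\Theta(X,Y,Z)=0$ by choosing, for arbitrary unit vectors $X\perp Y$, a compatible complex structure $S_2$ with $S_2X=Y$. Here I must exhibit such an $S_2$ whose orientation is the one prescribed by the fixed component. This is possible because $S_2$ is pinned down only on the $g$-orthogonal $2$-plane $\langle X,Y\rangle$ (where it acts as the rotation $X\mapsto Y\mapsto -X$); that plane is $S_2$-invariant, hence so is its orthogonal complement, which has dimension $4k-2\ge 2$. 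On this complement $S_2$ may be taken to be any compatible complex structure, and since the overall orientation is the product of the (fixed) orientation on the plane and the orientation on the complement, both classes are realized by suitable extensions. Thus the deduction $d\Theta=0$ goes through with $J$ restricted to the chosen component.

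The converse direction needs no modification: if $d\Theta=0$ then the torsion $T$ of $\nabla$ vanishes, so $N_K=0$ by (\ref{NK}) and condition (\ref{cond}) is trivially satisfied, whence $({\cal H}\oplus{\cal H}^{\ast})N_{\varepsilon}(A^h,B^h)_J=0$ at every $J$, in particular at every $J$ of the given component. I expect the main obstacle to be bookkeeping rather than new computation: the whole content is to confirm that the two structural modifications of $(S_1,S_2)$ used in the forward direction remain within a single component, and both are secured exactly by the hypothesis $\dim M=4k$, which makes $J$ and $-J$ orientation-compatible and leaves enough room in the orthogonal complement to fix the orientation of the auxiliary $S_2$.
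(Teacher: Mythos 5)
Your proposal is correct and takes essentially the same route as the paper: the paper's own proof consists precisely of the remark that the argument for Proposition~\ref{hor com zero} goes through on a fixed component because, in dimension $4k$, the structures $S_1$ and $-S_1$ induce the same orientation, and because the final $S_2$ with $S_2X=Y$ can be chosen to induce either orientation. Your write-up merely supplies the (correct) justifications for these two facts --- the sign $(-1)^{n}$ under $J\mapsto -J$ and the freedom of extending $S_2$ from the plane $\langle X,Y\rangle$ to its orthogonal complement --- which the paper asserts without detail.
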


Considering the double orientable covering of $M$, if necessary, we
may assume that $M$ itself is orientable. Fix an orientation on $M$.
Denote by ${\mathcal G}_{++}$ the subbundle of ${\mathcal G}$ whose
fibre at a point $p\in M$ consists of generalized complex structures
$J$ on $T_pM$ compatible with the generalized metric $E_p$ and such
that the complex structures $J_1$ and $J_2$ on $T_pM$ determined by
$J$ via (\ref{J1,2}) induce the orientation of $T_pM$. We define
subbundles ${\mathcal G}_{--}$, ${\mathcal G}_{+-}$, ${\mathcal
G}_{-+}$ in a similar way. These are the connected components of the
space ${\mathcal G}$.

\smallskip

\noindent {\it  Convention}.  Henceforth we assume that $M$ is
oriented and of dimension $4k$.

\smallskip

Recall that if $R$ is the curvature tensor of the Levi-Civita connection of
$(M,g)$,  the
curvature operator ${\mathcal R}$ is the self-adjoint endomorphism of
$\Lambda ^2TM$ defined by
$$
g({\mathcal R}(X\land Y),Z\land T)=g(R(X,Y)Z,T),\quad X,Y,Z,T\in TM.
$$
The metric on $\Lambda^2TM$ used in the left-hand side of the latter identity is defined by
$$
g(X_1\wedge X_2,X_3\wedge X_4)=g(X_1,X_3)g(X_2,X_4)-g(X_1,X_4)g(X_3,X_4).
$$
As is well-known, the curvature operator decomposes as (see, for example, \cite [Section 1 G, H]{Besse})
\begin{equation}\label{curv decom}
{\mathcal R}=\frac{s}{n(n-1)}Id + {\mathcal B}+ {\mathcal W},
\end{equation}
where $s$ is the scalar curvature of the manifold $(M,g)$ and ${\mathcal B}$, ${\mathcal W}$ correspond to its traceless
Ricci tensor and Weyl conformal tensor, respectively. If $\rho:TM\to TM$ is the Ricci operator,
$g(\rho(X),Y)=Ricci(X,Y)$, the operator ${\mathcal B}$ is given by
\begin{equation}\label{B}
{\mathcal B}(X\wedge Y)=\frac{1}{n-2}[\rho(X)\wedge Y + X\wedge\rho(Y) -\frac{2s}{n} X\wedge Y],
\quad X,Y\in TM.
\end{equation}
Thus, a Riemannian manifold  is Einstein exactly when ${\cal B}=0$;
it is conformally flat when ${\mathcal W}=0$.

\smallskip

If the dimension of $M$ is four, the Hodge star operator
defines an involution $\ast$ of $\Lambda^2TM$ and
we have the orthogonal decomposition
$$
\Lambda^2TM=\Lambda^2_{-}TM\oplus\Lambda^2_{+}TM,
$$
where $\Lambda^2_{\pm}TM$ are the subbundles of $\Lambda^2TM$
corresponding to the $(\pm 1)$-eigenvalues of the operator $\ast$.
Accordingly, the operator ${\mathcal W}$ has an extra decomposition
${\mathcal W}={\mathcal W}_{+}+{\mathcal W}_{-}$ where ${\mathcal
W}_{\pm}={\mathcal W}$ on $\Lambda^2_{\pm}TM$ and ${\mathcal
W}_{\pm}=0$ on $\Lambda^2_{\mp}TM$. The operator ${\mathcal B}$ does
not have a decomposition of this type since it maps
$\Lambda^2_{\pm}TM$ into $\Lambda^2_{\mp}TM$.

Recall also that a Riemannian manifold $(M,g)$ is  called self-dual
(anti-self-dual), if ${\cal W}_{-}=0$ (resp. ${\cal W}_{+}=0$).

\smallskip

According to Propositions~\ref{Nijenhuis} and \ref{dim=4k}, the
restriction of the generalized almost complex structure ${\mathcal
J}_1$ to a connected component $\widetilde{\mathcal G}$ of
${\mathcal G}$ is integrable if and only if $d\Theta=0$ and for
every $p\in M$, $A,B\in T_pM$, and for every generalized complex
structure $J\in\widetilde{\mathcal G}$ on $T_pM$
$$
\begin{array}{c}
-R(\pi_1(A),\pi_1(B))J +R(\pi_1(JA),\pi_1(JB))J \\[8pt]
-K_1R(\pi_1(JA),\pi_1(B))J - K_1R(\pi_1(A),\pi_1(JB))J =0,
\end{array}
$$
where $R$ is the curvature tensor of the connection $D$ on the
bundle $A(E')\oplus A(E'')$. If $(J_1,J_2)$ are the complex
structures on $T_pM$ determined by $J$, the latter identity is
equivalent to the identities
\begin{equation}\label{I and II comp}
\begin{array}{c}
-R(\pi_1(A),\pi_1(B))J_r +R(\pi_1(JA),\pi_1(JB))J_r \\[8pt]
-J_r\circ R(\pi_1(JA),\pi_1(B))J_r- J_r\circ R(\pi_1(A),\pi_1(JB))J_r =0, \quad r=1,2,
\end{array}
\end{equation}
where $R$ is the curvature tensor on the bundle $A(TM)$ of skew-symmetric endomorphism of $TM$ induced by the
connection $\nabla$.

Assume that $d\Theta=0$. Then $\nabla$ is the Levi-Civita connection
of the Riemannian manifold $(M,g)$. Every $A\in E_p'$ is of the form
$A=X+g(X)+\Theta(X)$ for some (unique) $X\in T_pM$ and
$JA=J_1X+g(J_1X)+\Theta(J_1X)$. Similarly, if $B\in E''_p$, then
$B=Y-g(Y)+\Theta(Y)$, $Y\in T_pM$ and
$JB=J_2Y-g(J_2Y)+\Theta(J_2Y)$.  It follows that the identity
(\ref{I and II comp}) is equivalent to the condition that for every
$X,Y,Z,U\in T_pM$ and every complex structures $(J_1,J_2)$ on $T_pM$
corresponding to a generalized complex structure $J$ in
$\widetilde{\mathcal G}$
\begin{equation}\label{Jklr}
\begin{array}{c}
g({\mathcal R}(X\wedge Y-J_jX\wedge J_lY), Z\wedge U-J_rZ\wedge J_rU)\\[6pt]
\hspace{3.3cm}=g({\mathcal R}(J_jX\wedge Y+X\wedge J_lY),J_rZ\wedge U+Z\wedge J_rU),\\[6pt]
 j,l,r=1,2.
\end{array}
\end{equation}
The complex structures $(J_1,J_2)$ in the latter identities are
compatible with the metric $g$ and, moreover, they induce the
orientation of  $T_pM$ if we consider the connected component
$\widetilde{\mathcal G}={\mathcal G}_{++}$, while $(J_1,J_2)$ induce
the opposite orientation in the case $\widetilde{\mathcal
G}={\mathcal G}_{--}$. If $\widetilde{\mathcal G}={\mathcal
G}_{+-}$, the complex structure $J_1$ induces the given orientation
of $T_pM$ and $J_2$ yields the opposite one, and vice versa if
$\widetilde{\mathcal G}={\mathcal G}_{-+}$.

\smallskip
For $j=l=r$ identity (\ref{Jklr}) coincides with the integrability
condition for the Atiyah-Hitchin-Singer almost complex structure
\cite{AtHiSi} on the positive or negative twistor space of $(M,g)$,
the fibre bundles over $M$ whose fibre at every point $p\in M$
consists of the complex structures on $T_pM$ compatible with the
metric and $\pm$ the orientation of $T_pM$ (see, for example,
\cite[Section 5.19]{Will}). It is also well known that this
integrability condition is equivalent to $(M,g)$ being conformally
flat if $dim\,M\geq 6$. If $dim\,M=4$ the integrability condition is
equivalent to anti-self-duality of $(M,g)$ in the case of positive
twistor spaces and to its self-duality  when considering the
negative twistor space.

\begin{tm} $I$. Suppose that $dim\,M=4$.

$(a)$ The restriction of the generalized complex structure ${\mathcal J}_1$ to ${\mathcal G}_{++}$  is integrable
if and only if $(M,g)$ is anti-self-dual and Ricci flat.

$(b)$ The restriction ${\mathcal J}_{1}|{\mathcal G}_{--}$ is integrable if and only if $(M,g)$ is self-dual and
Ricci flat.

II. If $dim\,M=4k\geq 6$, each of the restrictions of ${\mathcal
J}_1$ to ${\mathcal G}_{++}$ and  ${\mathcal G}_{--}$ is integrable
if and only if the manifold $(M,g)$ is flat.

\end{tm}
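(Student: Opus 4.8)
The starting point is the reduction already prepared before the statement. By Proposition~\ref{dim=4k} the horizontal part of $N_1$ vanishes on a connected component of ${\mathcal G}$ exactly when $d\Theta=0$, and once $d\Theta=0$ the connection $\nabla$ is the Levi-Civita connection of $(M,g)$, so $R$ becomes the Riemannian curvature and ${\mathcal R}$ its curvature operator. By Proposition~\ref{Nijenhuis}$(ii)$ the vertical part of $N_1$ then vanishes precisely when the identity (\ref{I and II comp}), equivalently (\ref{Jklr}), holds for $j,l,r=1,2$, for all $X,Y,Z,U\in T_pM$ and for every pair $(J_1,J_2)$ of $g$-compatible complex structures coming from a point $J$ of the component in question. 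Thus the whole theorem reduces to deciding, component by component, for which curvature tensors (\ref{Jklr}) holds for all admissible $(J_1,J_2)$. I treat ${\mathcal G}_{++}$, where both $J_1$ and $J_2$ range over \emph{all} complex structures compatible with $g$ and the fixed orientation; the case ${\mathcal G}_{--}$ is obtained by reversing the orientation (interchanging $\Lambda^2_{+}TM$ and $\Lambda^2_{-}TM$).

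\emph{Diagonal conditions.} The instances $j=l=r$ of (\ref{Jklr}) are exactly the integrability condition for the Atiyah--Hitchin--Singer almost complex structure on the twistor space, applied once with the single complex structure $J_1$ and once with $J_2$. As recalled above, this holds for all orientation-compatible complex structures if and only if $(M,g)$ is anti-self-dual when $\dim M=4$ (i.e. ${\mathcal W}_{+}=0$) and conformally flat when $\dim M\ge 6$ (i.e. ${\mathcal W}=0$). A useful observation for the sequel, valid in dimension four, is that in (\ref{Jklr}) the two bivectors $Z\wedge U-J_rZ\wedge J_rU$ and $J_rZ\wedge U+Z\wedge J_rU$ are of type $(2,0)+(0,2)$ with respect to the single complex structure $J_r$, hence lie in $\Lambda^2_{+}TM$ whenever $J_r$ induces the given orientation. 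Since ${\mathcal W}_{-}$ is self-adjoint and kills $\Lambda^2_{+}TM$, the ${\mathcal W}_{-}$-contribution to \emph{both} sides of every instance of (\ref{Jklr}) vanishes, for all index choices. Consequently, after substituting the decomposition (\ref{curv decom}), the system (\ref{Jklr}) constrains only the operator $\frac{s}{n(n-1)}\,Id+{\mathcal B}+{\mathcal W}_{+}$, and the diagonal part isolates ${\mathcal W}_{+}$ because the scalar and traceless-Ricci terms satisfy (\ref{Jklr}) identically on the diagonal (a direct computation with the metric on $\Lambda^2TM$).

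\emph{Off-diagonal conditions.} This is the crux and the main obstacle. Having secured ${\mathcal W}_{+}=0$ (dimension four) or ${\mathcal W}=0$ (dimension $\ge 6$) from the diagonal, I feed the remaining instances of (\ref{Jklr}), those in which $J_1$ and $J_2$ genuinely differ, into $\frac{s}{n(n-1)}\,Id+{\mathcal B}$. The point is that for a mixed index choice with $J_1\neq J_2$ (for instance $J_2=-J_1$, which is admissible because $\dim M=4k$ forces $-J_1$ to induce the same orientation as $J_1$) the bivectors $X\wedge Y-J_jX\wedge J_lY$ and $J_jX\wedge Y+X\wedge J_lY$ are no longer of pure type, so unlike on the diagonal the scalar and ${\mathcal B}$ terms now contribute nonzero expressions. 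Evaluating these on carefully chosen $X,Y,Z,U$ and expanding ${\mathcal B}$ by (\ref{B}) isolates the Ricci operator and forces $s=0$ and ${\mathcal B}=0$, i.e. $(M,g)$ is Ricci flat. The difficulty is purely bookkeeping: selecting the combinations of $(j,l,r)$ and of $(J_1,J_2)$ that cleanly separate the scalar and ${\mathcal B}$ parts from one another and leave no residual freedom; the independence of $J_1$ and $J_2$ is exactly the extra input, absent in the classical single-twistor-space situation, that kills the Ricci tensor. When $\dim M\ge 6$ this combines with ${\mathcal W}=0$ to give ${\mathcal R}=0$, which is part~II.

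\emph{Converse.} Finally I check that the listed conditions make (\ref{Jklr}) hold for all indices and all admissible $(J_1,J_2)$. For $\dim M\ge 6$ flatness gives ${\mathcal R}=0$ and (\ref{Jklr}) is trivial. For $\dim M=4$ and the component ${\mathcal G}_{++}$, anti-self-duality together with Ricci flatness reduces (\ref{curv decom}) to ${\mathcal R}={\mathcal W}_{-}$; by the self-adjointness observation of the second paragraph the second factor in each side of (\ref{Jklr}) lies in $\Lambda^2_{+}TM$ (since every $J_r$ here is orientation-compatible) while ${\mathcal W}_{-}$ maps into $\Lambda^2_{-}TM$, so both sides vanish identically and all instances hold. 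The component ${\mathcal G}_{--}$ is the same computation with $\Lambda^2_{+}TM$ and $\Lambda^2_{-}TM$, and self-duality versus anti-self-duality, interchanged. This completes the equivalences.
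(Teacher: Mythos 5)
Your overall architecture coincides with the paper's: reduce, via Proposition~\ref{dim=4k} and Proposition~\ref{Nijenhuis}~$(ii)$, to the curvature identity (\ref{Jklr}); extract the Weyl conditions (${\mathcal W}_{+}=0$ in dimension four, ${\mathcal W}=0$ in dimension $\geq 6$) from the diagonal instances $j=l=r$ via the classical Atiyah--Hitchin--Singer criterion; obtain Ricci flatness from the remaining instances; and check the converse. Your converse is correct and is essentially the paper's (both factors in the $(Z,U)$-slot are of type $(2,0)+(0,2)$ for a single orientation-compatible $J_r$, hence lie in $\Lambda^2_{+}T_pM$, and ${\mathcal R}={\mathcal W}_{-}$ is self-adjoint with ${\mathcal W}_{-}|\Lambda^2_{+}T_pM=0$, so both sides of (\ref{Jklr}) vanish). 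However, the necessity direction has a genuine gap: the entire derivation of $s=0$ and ${\mathcal B}=0$ is asserted rather than carried out. You write that evaluating the mixed instances on ``carefully chosen'' $X,Y,Z,U$ ``forces $s=0$ and ${\mathcal B}=0$'' and call the rest bookkeeping; but that bookkeeping \emph{is} the proof. The paper devotes most of its argument to it: it fixes explicit pairs ($J_1E_1=E_3$, $J_1E_2=-E_4$; $J_2E_1=E_4$, $J_2E_2=E_3$, both orientation-compatible and with $J_2\neq\pm J_1$), derives (\ref{k=l=1,r=2}), expands via (\ref{curv decom}) and (\ref{B}), sums the resulting identities over blocks of four basis indices to get $s=0$, and then extracts $\rho_{aa}=\rho_{bb}$ and $\rho_{ab}=0$ from further specific instances.

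Worse, the one concrete mechanism you do propose does not work. You suggest taking $J_2=-J_1$ (admissible, as you note) together with mixed indices $j\neq l$, claiming that the scalar and ${\mathcal B}$ terms ``now contribute nonzero expressions.'' They do not all contribute: with $J_2=-J_1$ and $j=1$, $l=2$, the first factors $X\wedge Y-J_1X\wedge J_2Y=X\wedge Y+J_1X\wedge J_1Y$ and $J_1X\wedge Y+X\wedge J_2Y=J_1X\wedge Y-X\wedge J_1Y$ are $J_1$-\emph{invariant} bivectors, while the second factors are anti-invariant and lie in $\Lambda^2_{+}T_pM$; invariant and anti-invariant bivectors are $g$-orthogonal, so the term $\frac{s}{n(n-1)}Id$ in (\ref{curv decom}) contributes identically zero to every such instance and can never force $s=0$. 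In the paper, $s$ enters precisely through instances with $j=l=1$ but $r=2$ and a pair with $J_2\neq\pm J_1$: there the left-hand side becomes the self-pairing $g({\mathcal R}(E_{12}+E_{34}),E_{12}+E_{34})$, which picks up $\frac{2s}{n(n-1)}$. (One \emph{can} wring information out of the pairs $(J_1,-J_1)$, but by a different mechanism than the one you describe: comparing $r=1$ with $r=2$ forces both sides of the $j=l=1$ identity to vanish, which kills the whole $\Lambda^2_{+}\times\Lambda^2_{+}$ block of ${\mathcal R}$; even so, the vanishing of ${\mathcal B}$ still requires the kind of explicit computation you have omitted.) So your plan stalls exactly at the step that distinguishes this theorem from the classical twistor integrability result, namely that the independence of $J_1$ and $J_2$ kills the Ricci tensor.
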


\begin{proof} Let $E_1,..., E_{n}$ be an oriented orthonormal basis of a tangent space $T_pM$. It is convenient to set
$E_{ab}=E_a\wedge E_b$ and $\rho_{ab}=Ricci(E_a,E_b)$, $a,b=1,...,n$.

Suppose that the structure ${\mathcal J}_1| {\mathcal G}_{++}$ is integrable. Let $J_1$ and $J_2$ be  complex structures
on $T_pM$ for which $J_1E_{1}=E_{3}$, $J_1E_{2}=-E_{4}$ and $J_2E_{1}=E_{4}$, $J_2E_{2}=E_{3}$. Identity (\ref{Jklr})
with $j=l=1$, $r=2$, and $(X,Y,Z,U)=(E_1,E_2,E_3,E_4)$ gives
\begin{equation}\label{k=l=1,r=2}
g({\mathcal R}(E_{12}+E_{34}),E_{12}+E_{34})+g({\mathcal R}(E_{14}+E_{23}),E_{13}+E_{42})=0.
\end{equation}
If $dim\,M=4$, then
$E_{12}+E_{34},E_{14}+E_{23}\in\Lambda^2_{+}T_pM$ and ${\mathcal
W}_{+}=0$, hence
$$
{\mathcal W}(E_{12}+E_{34})=
{\mathcal W}(E_{14}+E_{23})=0.
$$
If  $dim\,M\geq 6$, we have ${\mathcal W}=0$. Thus,  in both cases
by (\ref{curv decom})
$$
\begin{array}{c}
g({\mathcal R}(E_{12}+E_{34}),E_{12}+E_{34})+g({\mathcal
R}(E_{14}+E_{23}),E_{13}+E_{42})\\[6pt]
=\displaystyle{\frac{2s}{n(n-1)}}+g({\mathcal
B}(E_{12}+E_{34}),E_{12}+E_{34})+g({\mathcal
B}(E_{14}+E_{23}),E_{13}+E_{42}).
\end{array}
$$
By (\ref{B})
$$
\begin{array}{c}
g({\mathcal
B}(E_{12}+E_{34}),E_{12}+E_{34})=\displaystyle{\frac{1}{n-2}}[\rho_{11}+\rho_{22}+\rho_{33}+\rho_{44}-\frac{4s}{n}],\\[8pt]
g({\mathcal B}(E_{14}+E_{23}),E_{13}+E_{42})=0.
\end{array}
$$
Then by (\ref{k=l=1,r=2})
$$
\frac{2s}{n(n-1)}+\frac{1}{n-2}[\rho_{11}+\rho_{22}+\rho_{33}+\rho_{44}-\frac{4s}{n}]=0.
$$
In a similar way we see that
$$
\frac{2s}{n(n-1)}+\frac{1}{n-2}[\rho_{4i-3,4i-3}+\rho_{4i-2,4i-2}+\rho_{4i-1,4i-1}+\rho_{4i,4i}-\frac{4s}{n}]=0
$$
for $i=1,2,...,k$. Summing up these identities we get $s=0$.

In order to show that ${\mathcal B}=0$ we apply identity
(\ref{Jklr}) with $j=1$, $l=2$ and  take $J_1$, $J_2$ to be the
complex structures introduced above. Subtracting the identities
corresponding to $X=Y=E_2$ and $X=Y=E_3$, we get
$$
g({\mathcal R}(E_{13}-E_{42}),Z\wedge U-J_rZ\wedge
J_rU)=-g({\mathcal R}(E_{12}-E_{34}),J_rZ\wedge U+Z\wedge J_rU).
$$
Subtraction of the identities corresponding to $X=Y=E_1$ and
$X=Y=E_4$ gives
$$
g({\mathcal R}(E_{13}-E_{42}),Z\wedge U-J_rZ\wedge J_rU)=g({\mathcal
R}(E_{12}-E_{34}),J_rZ\wedge U+Z\wedge J_rU).
$$
Thus
\begin{equation}\label{aux}
g({\mathcal R}(E_{13}-E_{42}),Z\wedge U-J_rZ\wedge
J_rU)=0=g({\mathcal R}(E_{12}-E_{34}),J_rZ\wedge U+Z\wedge J_rU).
\end{equation}
If $dim\,M=4$, every $2$-vector of the form $Z\wedge U-J_rZ\wedge
J_rU$ lies in $\Lambda^{2}_{+}T_pM$ since $J_r$ is compatible with
the metric and  orientation of $T_pM$. Therefore ${\mathcal
W}(Z\wedge U-J_rZ\wedge J_rU)={\mathcal W}( J_rZ\wedge U+Z\wedge
J_rU)=0$. If $dim\,M\geq 6$, this is obvious. Then the first
identity in (\ref{aux}) with $r=1$ and $(Z,U)=(E_1,E_3)$ gives
$$
g({\mathcal B}(E_{13}-E_{42}),E_{13}+E_{42})=0.
$$
It follows by (\ref {B}) that
$$
\rho_{11}-\rho_{22}+\rho_{33}-\rho_{44}=0.
$$
Applying the latter identity for the basis
$E_1,E_3,E_4,E_2,E_5,...,E_n$, we get
$$
\rho_{11}-\rho_{22}-\rho_{33}+\rho_{44}=0.
$$
Therefore $\rho_{11}=\rho_{22}$. It follows that
$\rho_{11}=\rho_{aa}$ for $a=1,...,n$. This implies $\rho_{aa}=0$
for every $a=1,...,n$ since the scalar curvature vanishes. Moreover,
the first identity in (\ref{aux}) for $r=2$ and $(Z,U)=(E_1,E_2)$
reads as
$$
g({\mathcal B}(E_{13}-E_{42}),E_{12}+E_{34})=0.
$$
This gives $- \rho_{14}+\rho_{23}=0$. Similarly, it follows from the
second identity in (\ref{aux}) with $r=1$ and $(Z,U)=(E_2,E_2)$ that
$\rho_{14}+\rho_{23}=0$. Hence $\rho_{14}=\rho_{23}=0$. It follows
that $\rho_{ab}=0$, $a\neq b$. Therefore $Ricci=0$.

Conversely, it is obvious that identity (\ref{Jklr}) is satisfied if
$(M,g)$ is flat. In the case when $s=0$ and ${\mathcal B}={\mathcal
W}_{+}=0$, identity (\ref{Jklr}) is also trivially satisfied since
for every $X,Y\in T_pM$ and every complex structure $J$ on $T_pM$
compatible with the metric and orientation, the $2$-vector $X\wedge
Y- JX\wedge JY$ lies in $\Lambda^2_{+}T_pM$, so ${\mathcal
R}(X\wedge Y- JX\wedge JY)=0$.

This proves statements $ I\, (a)$ and $II$. Statement $ I\, (b)$ is an obvious corollary of $ I\, (a)$ by
reversing the orientation of $M$.

\end{proof}

\smallskip

\noindent{\bf Remark} 6. By a result of Hitchin \cite{Hit74} if $M$
is a compact anti-self-dual, Ricci flat four-dimensional manifold,
then either $M$ is flat or is a $K3$-surface, an Enriques surface or
the quotient of an Enriques surface by a free anti-holomorphic
involution.

\smallskip

\begin{tm} Each of the restrictions ${\mathcal J}_1|{\mathcal G}_{+-}$ and ${\mathcal J}_1|{\mathcal G}_{-+}$ is an
integrable generalized almost complex structure if and only $(M,g)$ is of constant sectional curvature.
\end{tm}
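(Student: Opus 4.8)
The plan is to reduce, exactly as for the preceding theorem, to the pointwise curvature identities and then read them off on the component ${\mathcal G}_{+-}$. By Propositions~\ref{Nijenhuis} and~\ref{dim=4k}, the restriction ${\mathcal J}_1|{\mathcal G}_{+-}$ is integrable precisely when $d\Theta=0$, so that $\nabla$ is the Levi--Civita connection and $R$ is the Riemannian curvature, together with the identities (\ref{Jklr}) holding at every point for all $j,l,r=1,2$ and all admissible pairs $(J_1,J_2)$; the feature distinguishing ${\mathcal G}_{+-}$ is that $J_1$ induces the given orientation of $T_pM$ and $J_2$ the opposite one. I shall use throughout the decomposition (\ref{curv decom}): the scalar part $\frac{s}{n(n-1)}Id$ and the Weyl part ${\mathcal W}$ preserve the splitting $\Lambda^2TM=\Lambda^2_+TM\oplus\Lambda^2_-TM$, while the traceless Ricci operator ${\mathcal B}$ of (\ref{B}) interchanges the two summands; and I recall that for a $g$-compatible complex structure $J$ inducing the given (resp.\ opposite) orientation the $2$-vectors $X\wedge Y-JX\wedge JY$ and $JX\wedge Y+X\wedge JY$ lie in $\Lambda^2_+TM$ (resp.\ $\Lambda^2_-TM$).

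The key point is an algebraic identity: on ${\mathcal G}_{+-}$, for every $j,l,r$ and all $X,Y,Z,U\in T_pM$, one has $g(\,X\wedge Y-J_jX\wedge J_lY,\;Z\wedge U-J_rZ\wedge J_rU\,)=g(\,J_jX\wedge Y+X\wedge J_lY,\;J_rZ\wedge U+Z\wedge J_rU\,)$, that is, (\ref{Jklr}) holds with ${\mathcal R}$ replaced by the identity operator. When $j=l\neq r$ this is clear: one side pairs a self-dual $2$-vector built from one of $J_1,J_2$ with an anti-self-dual one built from the other, so both sides vanish by orthogonality of $\Lambda^2_\pm TM$; in the remaining cases it follows by a direct expansion using $g(J_aX,J_aY)=g(X,Y)$ and $g(J_aX,Y)=-g(X,J_aY)$. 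I stress that this identity fails on ${\mathcal G}_{++}$ -- there the corresponding $2$-vectors have equal duality type and the two sides need not agree -- which is exactly the source of the difference between the two theorems.

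Sufficiency is then immediate: if $(M,g)$ has constant sectional curvature $c$, then ${\mathcal R}=c\,Id$ acts as $c\,Id$ on all of $\Lambda^2TM$, so (\ref{Jklr}) is $c$ times the algebraic identity above and holds; together with $d\Theta=0$ this gives integrability. For necessity, write ${\mathcal R}=\frac{s}{n(n-1)}Id+{\mathcal B}+{\mathcal W}$ in (\ref{Jklr}). Since (\ref{Jklr}) is linear in ${\mathcal R}$ and the scalar part contributes the algebraic identity, which holds identically, the scalar term drops out and (\ref{Jklr}) becomes a condition on ${\mathcal B}+{\mathcal W}$ alone. Taking $j=l=r=1$ gives the Atiyah--Hitchin--Singer condition on the positive twistor space, hence ${\mathcal W}_+=0$ for $\dim M=4$ and ${\mathcal W}=0$ for $\dim M\geq 6$; taking $j=l=r=2$ gives, $J_2$ being orientation-reversing, the condition on the negative twistor space, hence ${\mathcal W}_-=0$ (resp.\ ${\mathcal W}=0$); thus ${\mathcal W}=0$ in all dimensions $4k$. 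Once ${\mathcal W}=0$ is known, the remaining (mixed) configurations of (\ref{Jklr}) reduce to conditions on ${\mathcal B}$ alone; treating them with the same choices of $J_1,J_2$ and basis vectors as the Ricci computations in the proof of the preceding theorem, they force ${\mathcal B}=0$, i.e.\ $(M,g)$ is Einstein, but -- the scalar term having disappeared -- leave $s$ unconstrained, in contrast with the Ricci flatness obtained on ${\mathcal G}_{++}$. A conformally flat Einstein manifold of dimension $\geq 3$ has constant scalar curvature by Schur's lemma, so ${\mathcal R}=\frac{s}{n(n-1)}Id$ with $s$ constant, i.e.\ $(M,g)$ is of constant sectional curvature.

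Finally, reversing the orientation of $M$ interchanges ${\mathcal G}_{+-}$ and ${\mathcal G}_{-+}$ and preserves the constant-curvature condition, so the statement for ${\mathcal G}_{-+}$ follows. The step I expect to be most delicate is the verification, in the necessity part, that the mixed configurations of (\ref{Jklr}) really do exhaust the components of ${\mathcal B}$ and force it to vanish: one must choose, for each triple $(j,l,r)$, an orientation-preserving $J_1$, an orientation-reversing $J_2$ and adapted basis vectors so that (\ref{Jklr}) isolates the individual Ricci components, and check that the resulting linear system has only ${\mathcal B}=0$ as solution while never touching $s$.
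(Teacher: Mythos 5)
Your reduction via Propositions~\ref{Nijenhuis} and~\ref{dim=4k} to $d\Theta=0$ plus the identities (\ref{Jklr}), and your extraction of ${\mathcal W}=0$ from the cases $j=l=r$, follow the paper, and in dimension four the strategy can be made to work. The problem is the statement you build everything else on: that (\ref{Jklr}) with ${\mathcal R}$ replaced by the identity operator holds on ${\mathcal G}_{+-}$ for all $j,l,r$. Your proof of the decisive case $j=l\neq r$ uses the splitting $\Lambda^2TM=\Lambda^2_{+}TM\oplus\Lambda^2_{-}TM$ and the fact that $X\wedge Y-JX\wedge JY$, $JX\wedge Y+X\wedge JY$ are self-dual or anti-self-dual according to the orientation class of $J$; both facts are specific to $n=4$ (only there is the Hodge star an involution of $\Lambda^2$), and the claimed identity is actually \emph{false} when $n=4k\geq 8$. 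Concretely, for $n=8$ let $E_1,\dots,E_8$ be an oriented orthonormal basis, let $J_1$ be the standard positive structure $J_1E_{2i-1}=E_{2i}$, and let $J_2$ equal $-J_1$ on $\mathrm{span}(E_5,\dots,E_8)$ and be given by $J_2E_1=E_3$, $J_2E_2=E_4$ on $\mathrm{span}(E_1,\dots,E_4)$; then $J_1\in Z_{+}$ and $J_2\in Z_{-}$, so the pair is admissible for ${\mathcal G}_{+-}$, yet for $j=l=1$, $r=2$ and $X=Z=E_5$, $Y=U=E_7$ the two sides of your identity are
\[
g(E_5\wedge E_7-E_6\wedge E_8,\,E_5\wedge E_7-E_6\wedge E_8)=2,\qquad
g\bigl(E_6\wedge E_7+E_5\wedge E_8,\,-(E_6\wedge E_7+E_5\wedge E_8)\bigr)=-2.
\]
Hence your sufficiency argument, and the step of your necessity argument where ``the scalar term drops out'', are valid only for $\dim M=4$. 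Worse, feeding the same data into (\ref{I and II comp}) shows that for a metric of constant nonzero sectional curvature in dimension $4k\geq 8$ the vertical part of $N_1$ does not vanish on ${\mathcal G}_{+-}$, so sufficiency genuinely fails there (integrability then forces $s=0$, i.e.\ flatness); no repair of this step can avoid the dimension restriction. This is exactly the point at which the paper itself offers no detail, its converse being dismissed as ``a straightforward computation'', so your attempt to supply that computation exposes a real issue rather than a routine verification.

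There is a second, independent gap in the necessity direction: you never actually derive ${\mathcal B}=0$, but defer it to ``the same choices of $J_1,J_2$ and basis vectors as the Ricci computations in the proof of the preceding theorem''. Those choices are inadmissible here: in the preceding theorem both structures induce the given orientation, whereas every pair coming from ${\mathcal G}_{+-}$ has $J_2$ orientation-reversing. The paper's proof uses genuinely different configurations --- $J_1E_1=E_3$, $J_1E_4=E_2$ together with $J_2E_1=E_4$, $J_2E_2=-E_3$, inserted into (\ref{Jklr}) with $j=1$, $l=r=2$ --- and the linear identities these produce differ from those of the preceding theorem: one obtains $\rho_{aa}+\rho_{bb}=2s/n$ for $a\neq b$ and $\rho_{ab}=0$, which give ${\mathcal B}=0$ while leaving $s$ unconstrained, instead of $\rho_{aa}=\rho_{ab}=0$. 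You flag this step as delicate, but as proposed it is not a transcription of the earlier computation; it must be carried out anew with orientation-mixed pairs, which is precisely what the paper does.
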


\begin{proof} Suppose that ${\mathcal J}_1|{\mathcal G}_{+-}$ is integrable. Then, by the preceding remarks,
if $dim\,M=4$ $(M,g)$ is both anti-self-dual and self-dual, hence
${\mathcal W}=0$; if $dim\,M\geq 6$, we also have ${\mathcal W}=0$.
Take an orthonormal oriented basis $E_1,...,E_n$ of a tangent space
$T_pM$ and  consider (\ref{Jklr}) with $j=1$, $l=r=2$. Take for
$J_1$ and  $J_2$  the complex structures on $T_pM$ for which
$J_1E_{1}=E_{3}$, $J_1E_{4}=E_{2}$ and $J_2E_{1}=E_{4}$,
$J_2E_{2}=-E_{3}$. Adding the identities corresponding to
$(X,Y)=(E_1,E_2)$ and $(X,Y)=(E_3,E_4)$, we get
$$
g({\mathcal R}(E_{12}+E_{34}),Z\wedge U-J_2Z\wedge J_2U)+g({\mathcal
R}(E_{14}+E_{23}),J_2Z\wedge U + Z\wedge J_2U)=0.
$$
For $(Z,U)=(E_1,E_2)$, this gives
$$
g({\mathcal R}(E_{12}+E_{34}),E_{12}-E_{34})-g({\mathcal
R}(E_{14}+E_{23}),E_{13}-E_{42})=0.
$$
Then, since ${\mathcal W}=0$, we obtain by means of (\ref{B})
$$
\rho_{11}+\rho_{22}-\rho_{33}-\rho_{44}-2(\rho_{12}+\rho_{34})=0.
$$
Applying this identity for the basis
$(-E_1,E_2,-E_3,E_4,E_5,...,E_n)$ we have
$$
\rho_{11}+\rho_{22}-\rho_{33}+\rho_{44}+2(\rho_{12}+\rho_{34})=0.
$$
Hence
$$
\rho_{11}+\rho_{22}=\rho_{33}+\rho_{44},\quad \rho_{12}=-\rho_{34}.
$$
The first of these identities imply
$$
\rho_{11}+\rho_{22}=\rho_{aa}+\rho_{bb}\hspace{0.3 cm} \rm{for}
\hspace{0.3 cm}  a\neq b,\quad a,b=1,...,n.
$$
It follows that
$$
\rho_{aa}+\rho_{bb}=\frac{s}{2k}, \quad a\neq b
$$
Applying the  above obtained identity  $\rho_{12}=-\rho_{34}$ for
the basis $(E_2,E_1,-E_3,\\ E_4,E_5,...,E_n)$ we get
$\rho_{12}=\rho_{34}$, thus $\rho_{12}=\rho_{34}=0$.  It follows
that
$$
\rho_{ab}=0,\quad a\neq b.
$$
Now we note that the condition ${\mathcal B}=0$ is equivalent to
$$
\rho_{aa}+\rho_{bb}-\frac{2s}{n}=0, \quad \rho_{ab}=0,\quad a\neq
b,\quad a,b=1,...,n.
$$
Thus we can conclude that ${\mathcal B}=0$.  Therefore ${\mathcal
R}=\displaystyle{\frac{s}{n(n-1)}}Id$, i.e. $(M,g)$ is of constant
sectional curvature.

Conversely, if ${\mathcal R}=\displaystyle{\frac{s}{n(n-1)}}Id$, a
straightforward computation shows that identity (\ref{Jklr}) is
satisfied.

\end{proof}

\section{Natural isomorphisms of generalized twistor spaces}\label{aut}

\noindent {\bf I}. Let $f:A(E')\oplus A(E'')\to A(E')\oplus A(E'')$ be the bundle isomorphism $a=(a',a'')\to (a',-a'')$.
The differential of this isomorphism preserves the horizontal lifts, $f_{\ast}X^h_{a}=X^h_{f(a)}$, and if $V=(V',V'')$ is
a vertical vector, $f_{\ast}V=(V',-V'')$. The restriction of $f$ to the generalized twistor space ${\mathcal G}$ is an
automorphism of ${\mathcal G}$. The automorphism $F=f_{\ast}\oplus (f^{-1})^{\ast}$ of $T{\mathcal G}\oplus
T^{\ast}{\mathcal G}$ preserves the horizontal and vertical subbundles and sends the generalized almost complex structure
${\mathcal J}_{\varepsilon}$ to the structure $\bar{\mathcal J}_{\varepsilon}$ given by $\bar{\mathcal
J}_{\varepsilon}A^h_J=(F^{-1}(J)A)^h_J$ for $A\in T_{\pi(J)}M\oplus T^{\ast}_{\pi(J)}M$ and $\bar{\mathcal
J}_{\varepsilon}={\mathcal J}_{\varepsilon}$ on ${\mathcal V}\oplus {\mathcal V}^{\ast}$. By
Proposition~\ref{Courant-diffeo}, $\bar{\mathcal J}_{\varepsilon}$ is integrable if and only if ${\mathcal
J}_{\varepsilon}$ is so.

\smallskip

\noindent {\bf II}. Now for $a=(a',a'')\in A(E')\oplus A(E'')$ set
$$
a_1=pr_{TM}|E'\circ a'\circ (pr_{TM}|E')^{-1},\quad
a_2=pr_{TM}|E''\circ a''\circ (pr_{TM}|E'')^{-1}.
$$
Let $\varphi$ be the automorphism $a\to b=(b',b'')$ of $A(E')\oplus
A(E'')$ defined by
$$
b'=(pr_{TM}|E')^{-1}\circ a_2\circ pr_{TM}|E',\quad b''=(pr_{TM}|E'')^{-1}\circ a_1\circ pr_{TM}|E''.
$$
The differential $\varphi_{\ast}$ preserves the horizontal lifts.
Clearly, if $J\in {\mathcal G}$ gives rise to the complex structures
$(J_1,J_2)$ on $T_{\pi(J)}M$, then $\varphi(J)\in {\mathcal G}$  is
the generalized complex structure on $T_{\pi(J)}M$ determined by the
pair $(J_2,J_1)$. Moreover,  if $V\in{\mathcal V}_J$ gives rise to
the tangent vector $(V_1,V_2)$ of $Z(T_{\pi(J)}M,g)\times
Z(T_{\pi(J)}M,g)$ at $(J_1,J_2)$, then $\varphi_{\ast}V$ is the
vertical vector of ${\mathcal G}$ at $\varphi(J)$ determined by
$(V_2,V_1)$.

\smallskip

\noindent {\bf III}. Let $(M,\mathcal{J})$ and $(N,\mathcal{K})$ be two generalized
complex manifolds. Every diffeomorphism $\emph{f}: M\rightarrow N$
induces a bundle isomorphism
$F=\emph{f}_{\ast}\oplus\emph{f}^{\ast-1}:TM\oplus
T^{\ast}M\rightarrow TN\oplus T^{\ast}N$ and the identity
$F\circ\mathcal{J}=\mathcal{K}\circ F$ is a natural generalization
of the condition for a map between complex manifolds to be
holomorphic. The diffeomorphisms are not the only symmetries of
the generalized complex structures, the $B-$transforms are also
symmetries. Thus we  say that $(M,\mathcal{J})$ and
$(N,\mathcal{K})$ are equivalent if there is a diffeomorphism
$\emph{f}:\,M\rightarrow N$ and a closed $2$-form $B$ on $M$ such
that $F\circ e^{B}\mathcal{J}e^{-B}=\mathcal{K}\circ F$ (this is really an equivalence relation).
Since the form $B$ is closed, each of two equivalent generalized almost complex structures $\mathcal{J}$
and ${\mathcal K}$ is integrable if and only if the other one is so.

\smallskip
Let $\widehat{E}$ be the $B$-transform of $E$ by a $2$-form $\Psi$ on $M$. Then we have a natural diffeomorphism
$\beta$ of the generalized twistor spaces ${\mathcal G}={\mathcal G}(E)$ and
$\widehat{\mathcal G}={\mathcal G}(\widehat{E})$ sending a generalized complex structure $J\in{\mathcal G}$ to
its $B$-transform $\widehat{J}=e^{\Psi}Je^{-\Psi}$.

Denote by $D$ and $\widehat{D}$ the connections on $TM\oplus
T^{\ast}M$  determined by the generalized metrics $E$ and
$\widehat{E}$, respectively, as in Sec. \ref{main case}. Let
${\mathcal J}={\mathcal J}^{E}_{1}$ and $\widehat{\mathcal
J}={\mathcal J}^{\widehat{E}}_{1}$  be the generalized almost
complex structures on ${\mathcal G}$ and $\widehat{\mathcal G}$
defined by means of the connections  $D$ and $\widehat{D}$. If the
form $\Psi$ is closed, these generalized almost complex structures
are equivalent in a natural way. Indeed, set $E'=E$,
$\widehat{E}'=\widehat{E}$. The $B$-transform by $\Psi$ is an
orthogonal transformation of $TM\oplus T^{\ast}M$, thus it sends
$E''=E^{\perp}$ onto $\widehat{E}''=\widehat{E}^{\perp}$, the
orthogonal complements being taken with respect to the metric
$<.\,,.>$. Let $\nabla$ and $\widehat\nabla$ be the connections on
$TM$ obtained by transferring $D'=D|E$ and
$\widehat{D}'=\widehat{D}|\widehat{E}$. Recall that, on a Riemannian
manifold $(M,g)$, there is a unique metric connection with a given
torsion $T$ (for an explicit formula see, for example, \cite[Sec.
3.5, formula (14)]{GKM}). If  the torsion $3$-form
$T(X,Y,Z)=g(T(X,Y),Z)$
is skew-symmetric this connection can be written as \\
$\nabla^{LC}+\frac{1}{2}T$ where $\nabla^{LC}$ is the Levi-Civita
connection of $(M,g)$. Thus
$$
\begin{array}{c}
\nabla_{X}{Y}=\nabla^{LC}_{X}Y-\frac{1}{2}g^{-1}(\imath_{X}\imath_{Y}d\Theta),\\[6pt] \widehat{\nabla}_{X}{Y}
=\nabla^{LC}_{X}Y-\frac{1}{2}g^{-1}(\imath_{X}\imath_{Y}d\Theta)-\frac{1}{2}g^{-1}(\imath_{X}\imath_{Y}d\Psi).
\end{array}
$$
Hence
$$
\widehat{\nabla}_{X}{Y}=\nabla_{X}{Y}-\frac{1}{2}g^{-1}(\imath_{X}\imath_{Y}d\Psi).
$$
Suppose that the form $\Psi$ is closed, so that
$\widehat{\nabla}_{X}{Y}=\nabla_{X}{Y}$. Then the $B$-transform
$e^{\Psi}$ sends the connection $D$ to the connection $\widehat{D}$
since $pr_{TM}|E'=pr_{TM}|\widehat{E}'\circ e^{\Psi}$ and
$pr_{TM}|E''=pr_{TM}|\widehat{E}''\circ e^{\Psi}$.

It follows that  $\beta: L\to \widehat{L}=e^{\Psi}Le^{-\Psi}$ is an
isometry of  $A(E')\oplus A(E'')$ onto $A(\widehat{E}')\oplus
A(\widehat{E}'')$ sending the connection $D$ on  $A(E')\oplus
A(E'')$ induced by the connection $D|E'\oplus D|E''$  to the
connection $\widehat{D}'$ on $A(\widehat{E}')\oplus
A(\widehat{E}'')$ induced by $\widehat{D}|\widehat{E}'\oplus
\widehat{D}|\widehat{E}''$. In particular, $\beta_{\ast}$ preserves
the horizontal spaces,
\begin{equation}\label{B-Xh}
\beta_{\ast}X^h_L=X^{ \widehat h}_{\widehat{L}},\quad X\in TM,
\end{equation}
where $X^{\widehat h}$ is the horizontal lift of $X$  to
$T(A(\widehat{E}')\oplus A(\widehat{E}''))$.

The restriction of $\beta$ to ${\mathcal G}$ is a diffeomorphism of
${\mathcal G}$ onto $\widehat{\mathcal G}$ whose differential
preserves the horizontal spaces. Clearly,  $\beta_{\ast}$ preserves
also the vertical spaces sending a vertical vector $V$ at $J\in
{\mathcal G} $ to the vertical vector
$\widehat{V}=e^{\Psi}Ve^{-\Psi}$ at $\widehat{J}$. Then, if
$\alpha\in T^{\ast}_pM$, $Z\in T_pM$
$$
((\beta^{-1})^{\ast}\alpha^h _{J})(Z^{\widehat h}_{\widehat{J}})
=\alpha^h(\beta^{-1}_{\ast}Z^{\widehat
h}_{\widehat{J}})=\alpha^h_{J}(Z^h_{J})=\alpha(Z)
={\alpha}^{\widehat h}_{\widehat{J}}(Z^{\widehat h}_{\widehat{J}}),
$$
where $\alpha^{\widehat h}$ is the horizontal lift of $\alpha$ to
$T(A(\widehat{E}')\oplus A(\widehat{E}''))$. Also
$$
((\beta^{-1})^{\ast}\alpha^h
_{J})(\widehat{V})=\alpha^h(\beta^{-1}_{\ast}\widehat{V})=0
=\alpha^{\widehat h}_{\widehat{J}}(\widehat{V})
$$
for every vertical vector $\widehat{V}$ at $\widehat{J}$.
Thus
\begin{equation}\label{B-alphah}
(\beta^{-1})^{\ast}\alpha^h_J=\alpha^{\widehat h}_{\widehat J},\quad
\alpha\in T^{\ast}M.
\end{equation}
Note also that if $\Upsilon\in{\mathcal V}^{\ast}_{\widehat{J}}$,
$$
((\beta^{-1})^{\ast}\Upsilon)(\widehat{V})=\Upsilon(e^{-\Psi}\widehat{V}e^{\Psi}).
$$
Set
$$
{\mathscr B}=\beta_{\ast}\oplus(\beta^{-1})^{\ast},\quad
\widetilde{\Psi}=\pi^{\ast}\Psi,
$$
where, as before,  $\pi$ is the projection to $M$ of the bundle
$A(E')\oplus A(E'')$ restricted to ${\mathcal G}$. Taking into
account the fact that $B$-transforms act as the identity on
$1$-forms, we have
$$
{\mathscr B}(e^{\widetilde{\Psi}}{\mathcal
J}e^{-\widetilde{\Psi}}(\Upsilon))(\widehat{V}) ={\mathscr
B}(K_{1}^{\ast}\Upsilon)(\widehat{V})=(K_{1}^{\ast}\Upsilon)(e^{-\Psi}\widehat{V}e^{\Psi})
=\Upsilon(Je^{-\Psi}\widehat{V}e^{\Psi})
$$
and
$$
(\widehat{\mathcal J}{\mathscr B}(\Upsilon))(\widehat{V})={\mathscr
B}(\Upsilon)(\widehat{J}\widehat{V})=\Upsilon(e^{-\Psi}\widehat{J}\widehat{V}e^{\Psi})
=\Upsilon(Je^{-\Psi}\widehat{V}e^{\Psi}).
$$
Thus
$$
{\mathscr B}(e^{\widetilde{\Psi}}{\mathcal
J}e^{-\widetilde{\Psi}}(\Upsilon))=\widehat{\mathcal J}({\mathscr
B}(\Upsilon)).
$$
Also
$$
{\mathscr B}(e^{\widetilde{\Psi}}{\mathcal
J}e^{-\widetilde{\Psi}}(V))={\mathscr B}(JV)=e^{\Psi}JVe^{-\Psi}
=e^{\Psi}Je^{-\Psi}e^{\Psi}Ve^{-\Psi}=\widehat{J}{\mathscr
B}(V)=\widehat{\mathcal J}{\mathscr B}(V)
$$
since $\widetilde{\Psi}(V)=0$. For $J\in {\mathcal G}$, let
$(J_1,J_2)$ be the complex structures on $T_{\pi(J)}M$ determined by
$J$. Let $A=X+g(X)+\Theta(X)\in E'_{\pi(J)}$. Noting that
$\widetilde\Psi(X^h)=(\Psi(X))^h$, we have
$$
\begin{array}{l}
e^{\widetilde\Psi}{\mathcal J}e^{-\widetilde\Psi}(A^h_J)=e^{\widetilde\Psi}{\mathcal J}(X+g(X)+\Theta(X)-\Psi(X))^h_J\\[6pt]
=e^{\widetilde\Psi}(J_1X+g(J_1X)+\Theta(J_1X)-J\Psi(X))^h_J\\[6pt]
=[J_1X+g(J_1X)+\Theta(J_1X)-J\Psi(X)-\Psi(\pi_1(J\Psi(X)))]^h_J\\[6pt]
=[e^{\Psi}Je^{-\Psi}(A)]^h_{J}=(\widehat{J}(A))^h_J.
\end{array}
$$
Then,  by (\ref{B-Xh}) and (\ref{B-alphah}),
$$
{\mathscr B}(e^{\widetilde\Psi}{\mathcal
J}e^{-\widetilde\Psi}(A^h_J))=(\widehat{J}(A))^{\widehat
h}_{\widehat J}=\widehat{\mathcal J}(A^{\widehat h}_{\widehat
J})=\widehat{\mathcal J}({\mathscr B}(A^h_J)).
$$
Similarly, for $A=X-g(X)+\Theta(X)\in E''_{\pi(J)}$ in which case
$JA=J_2X-g(J_2X)+\Theta(J_2X)$.

This shows that ${\mathscr B}\circ (e^{\widetilde{\Psi}}{\mathcal J}e^{-\widetilde{\Psi}}) ={\widehat{\mathcal
J}}\circ{\mathscr B}$ where the $2$-form $\widetilde{\Psi}$ is closed.

A similar identity holds for another closed $2$-form $\bar{\Psi}$
under certain restrictions on the curvature of $M$. This form is
defined by
$$
\begin{array}{c}
\bar\Psi(X^h,Y^h)_J=\Psi(X,Y)_{\pi(J)}, \quad \bar\Psi(X^h,V)=\bar\Psi(V,X^h)=0,\\[6pt]
\bar\Psi(V,W)_J=G(V,K_1W),
\end{array}
$$
where $X,Y\in T_{\pi(J)}$, $V,W\in{\mathcal V}_J$. To prove the
identity ${\mathscr B}\circ (e^{\bar{\Psi}}{\mathcal
J}e^{-\bar{\Psi}})={\widehat{\mathcal J}}\circ{\mathscr B}$ we have
only to show that ${\mathscr B}(e^{\bar{\Psi}}{\mathcal
J}e^{-\bar{\Psi}}(V))={\widehat{\mathcal J}}{\mathscr B}(V)$. But
this follows  from the identity
$$
e^{\bar\Psi}{\mathcal J}e^{-\bar\Psi}(V)=K_1V-K_1^{\ast}(\bar\Psi(V))+\bar\Psi(K_1V)=
J\circ V-G(V)+G(V)=J\circ V.
$$
The standard formula for the differential in terms of the Lie
bracket and identity (\ref{[XhYh]}) imply
$d\bar{\Psi}(X^h,Y^h,Z^h)_J=0$. Let $\widetilde{a}$, $\widetilde{b}$
be the vertical vector fields obtained from sections $a$, $b$ of
$A(E')\oplus A(E'')$ such that $a(p)=V$, $b(p)=W$, $Da|_p=Db|_p=0$
for $p=\pi(J)$. Then, by Lemma~\ref{H-V brackets},
$d\bar{\Psi}(X^h,V,W)_J=X^h_JG(\widetilde{a},\widetilde{b})$ and it
is easy to see that $X^h_JG(\widetilde{a},\widetilde{b})=0$ using
formulas given in the proof of Lemma~\ref{H-V brackets}. Next,
$d\bar{\Psi}(X^h,Y^h,V)_J=0$ if and only if $G(R(X,Y)J,JV)=0$.
Therefore $d\bar{\Psi}=0$ if and only if $R(X,Y)J=0$ for every
$J\in{\mathcal G}$ and $X,Y\in T_{\pi(J)}M$. The latter condition is
equivalent to
\begin{equation}\label{last}
g({\mathcal R}(X\wedge Y),J_k Z\wedge U+Z\wedge J_k U)=0, \quad
k=1,2,\quad X,Y,Z,U\in T_{\pi(J)}M,
\end{equation}
where $(J_1,J_2)$ are the complex structures on $T_{\pi(J)}M$ determined by $J$.

Let $dim\,M=4$. In this case, identity (\ref{last}) for $J$ running over ${\mathcal G}_{++}$ (${\mathcal G}_{--}$)
is equivalent to $(M,g)$ being Ricci flat and anti-self-dual (self-dual, respectively). This identity holds on
${\mathcal G}_{+-}$ or ${\mathcal G}_{-+}$ if and only if $(M,g)$ is flat.

If $dim\,M\geq 6$, identity (\ref{last}) is equivalent to the flatness of $(M,g)$.

\smallskip

Finally, note that the complex structures on a tangent space of $M$ determined by $J\in{\mathcal G}$ and
$\widehat{J}=e^{\Psi}Je^{-\Psi}$ via (\ref{J1,2}) are the same. Therefore the diffeomorphism $\beta$ sends the connected
components ${\mathcal G}_{++},...,{\mathcal G}_{-+}$ of ${\mathcal G}$ onto the corresponding connected components
$\widehat{\mathcal G}_{++},...,\widehat{\mathcal G}_{-+}$ of $\widehat{\mathcal G}$. In the case $\Psi=-\Theta$ we have
$\widehat{E}=\{X+g(X): X\in TM\}$. Thus if $\Theta$ is closed  the integrability conditions for the generalized almost
complex structure ${\mathcal J}$ are the same as those for $\widehat{\mathcal J}$.

We summarize the considerations above as follows.

\begin{tm}
Let $E$ and $\widehat{E}$ be generalized metrics on a manifold $M$
determined by the same metric $g$ and $2$-forms $\Theta$ and
$\widehat\Theta$. If the $2$-form $\Theta-\widehat\Theta$ is closed,
the generalized almost complex structures ${\mathcal J}^{E}_{1}$ and
${\mathcal J}^{\widehat{E}}_{1}$  on the generalized twistor spaces
${\mathcal G}(E)$ and ${\mathcal G}(\widehat{E})$  are equivalent.
\end{tm}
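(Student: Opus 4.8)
The plan is to recognize $\widehat E$ as the $B$-transform of $E$ by the closed $2$-form $\Psi=\widehat\Theta-\Theta$ and to build the equivalence out of the diffeomorphism that this transform induces on the twistor spaces. First I would observe that $e^{\Psi}(X+g(X)+\Theta(X))=X+g(X)+\widehat\Theta(X)$, so that $e^{\Psi}E=\widehat E$ and, since $e^{\Psi}$ is orthogonal with respect to $<.\,,.>$, also $e^{\Psi}E''=\widehat E''$. This yields a natural diffeomorphism $\beta\colon{\mathcal G}(E)\to{\mathcal G}(\widehat E)$ given by $\beta(J)=e^{\Psi}Je^{-\Psi}$, whose differential sends a vertical vector $V$ at $J$ to the vertical vector $\widehat V=e^{\Psi}Ve^{-\Psi}$ at $\widehat J=\beta(J)$.

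The key step is to show that $\beta$ carries the connection $D$ on $A(E')\oplus A(E'')$ to the connection $\widehat D$ on $A(\widehat E')\oplus A(\widehat E'')$, and this is precisely where the closedness of $\Psi$ is used. By Proposition~\ref{torsion} the connections $\nabla$ and $\widehat\nabla$ induced on $TM$ by $E$ and $\widehat E$ have skew-symmetric torsions whose $3$-forms are $d\Theta$ and $d\widehat\Theta$; each connection equals $\nabla^{LC}$ plus one half of its torsion, so $d(\widehat\Theta-\Theta)=0$ forces the torsions to coincide and hence $\widehat\nabla=\nabla$. Because $pr_{TM}|E'=pr_{TM}|\widehat E'\circ e^{\Psi}$ and $pr_{TM}|E''=pr_{TM}|\widehat E''\circ e^{\Psi}$, the $B$-transform then intertwines $D$ and $\widehat D$, so $\beta_{\ast}$ preserves the horizontal distributions, $\beta_{\ast}X^h_L=X^{\widehat h}_{\widehat L}$, and dually $(\beta^{-1})^{\ast}\alpha^h_J=\alpha^{\widehat h}_{\widehat J}$ for $\alpha\in T^{\ast}M$.

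With $\mathscr B=\beta_{\ast}\oplus(\beta^{-1})^{\ast}$ and the closed $2$-form $\widetilde\Psi=\pi^{\ast}\Psi$ on ${\mathcal G}(E)$, it then remains to verify the single intertwining identity $\mathscr B\circ(e^{\widetilde\Psi}{\mathcal J}^E_1 e^{-\widetilde\Psi})={\mathcal J}^{\widehat E}_1\circ\mathscr B$, which I would check separately on horizontal vectors, horizontal covectors, vertical vectors and vertical covectors. On the vertical part $\widetilde\Psi(V)=0$, so the $B$-transform acts trivially and the check collapses to $\mathscr B(JV)=e^{\Psi}(JV)e^{-\Psi}=\widehat J\,\widehat V$, with the vertical-covector case being its dual (using that $B$-transforms fix $1$-forms). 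The hard part will be the horizontal computation: for $A=X+g(X)+\Theta(X)\in E'_{\pi(J)}$ one must track how the $B$-transform $e^{\widetilde\Psi}$ — whose action on a horizontal lift introduces a correction term $\Psi(\pi_1(J\Psi(X)))$ — conspires with the defining rule ${\mathcal J}A^h=(JA)^h$ to reproduce exactly the fibrewise transform $e^{\Psi}Je^{-\Psi}$, giving $e^{\widetilde\Psi}{\mathcal J}^E_1 e^{-\widetilde\Psi}(A^h_J)=(\widehat J A)^h_J$; the horizontal-preservation $\beta_{\ast}X^h=X^{\widehat h}$ then turns this into $\widehat{\mathcal J}(\mathscr B(A^h_J))$, and the case $A\in E''_{\pi(J)}$ is identical. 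Since $\widetilde\Psi=\pi^{\ast}\Psi$ is closed, the resulting identity exhibits ${\mathcal J}^E_1$ and ${\mathcal J}^{\widehat E}_1$ as equivalent, so one is integrable exactly when the other is; and because the complex structures attached to $J$ and to $e^{\Psi}Je^{-\Psi}$ through (\ref{J1,2}) coincide, $\beta$ matches the four connected components, making the equivalence respect the decomposition of the twistor space.
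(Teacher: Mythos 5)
Your proposal is correct and follows essentially the same route as the paper's own argument: recognizing $\widehat{E}=e^{\Psi}E$ with $\Psi=\widehat\Theta-\Theta$ closed, showing via the torsion formula $\nabla=\nabla^{LC}+\frac{1}{2}T$ that closedness forces $\widehat\nabla=\nabla$ so that $\beta(J)=e^{\Psi}Je^{-\Psi}$ intertwines the connections $D$ and $\widehat{D}$, and then verifying $\mathscr{B}\circ(e^{\widetilde\Psi}{\mathcal J}^{E}_{1}e^{-\widetilde\Psi})={\mathcal J}^{\widehat{E}}_{1}\circ\mathscr{B}$ with $\widetilde\Psi=\pi^{\ast}\Psi$ separately on horizontal and vertical vectors and covectors. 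The only difference is cosmetic (you phrase the horizontal computation as a plan rather than carrying it out), so nothing further is needed.
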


\vspace{0.1cm}

\end{document}